\newcommand{\git}{\mathbin{\!
  \mathchoice{/\mkern-6mu/}
    {/\mkern-5mu/}
    {/\mkern-5mu/}
    {/\mkern-5mu/}}\!}
\newcommand{\R}{\mathbb{R}}
\newcommand{\C}{\mathbb{C}}
\newcommand{\g}{\mathfrak{g}}
\renewcommand{\k}{\mathfrak{k}}
\renewcommand{\O}{\mathcal{O}}
\newcommand{\F}{\mathcal{F}}
\newcommand{\G}{\mathcal{G}}
\renewcommand{\L}{\mathcal{L}}
\newtheorem{theorem}{Theorem}[section]
\newtheorem*{theoremGlobalExtension}{Theorem \ref{GlobalExtension}}
\newtheorem*{theoremPE}{Theorem \ref{PseudoconvexEmbedding}}
\newtheorem*{theoremLBE}{Corollary \ref{LineBundleEmbedding}}
\newtheorem{lemma}[theorem]{Lemma}
\newtheorem{proposition}[theorem]{Proposition}
\newtheorem{corollary}[theorem]{Corollary}
\newenvironment{remark}[1][Remark]{\begin{trivlist}
\item[\hskip \labelsep {\bfseries #1}]}
{\end{trivlist}}
\newenvironment{definition}[1][Definition]{\begin{trivlist}
\item[\hskip \labelsep {\bfseries #1}]}
{\end{trivlist}}
\newenvironment{example}[1][Example]{\begin{trivlist}
\item[\hskip \labelsep {\bfseries #1}]}
{\end{trivlist}}
\begin{document}

\title{Equivariant embeddings of strongly pseudoconvex Cauchy-Riemann manifolds}
\author{Kevin Fritsch and Peter Heinzner}
\thanks{Kevin Fritsch and Peter Heinzner were supported by the CRC TRR 191: “Symplectic Structures in Geometry, Algebra
and Dynamics”}

\begin{abstract}
Let $X$ be a CR manifold with transversal, proper CR $G$-action. 
We show that $X/G$ is a complex space such that the quotient map is a CR map.
Moreover the quotient is universal, i.e. every invariant CR map into a complex manifold factorises uniquely over a holomorphic map on $X/G$.
We then use this result and complex geometry to proof an embedding theorem for (non-compact) strongly pseudoconvex CR manifolds with transversal $G \rtimes S^1$-action. 
The methods of the proof are applied to obtain a projective embedding theorem for compact CR manifolds. 
\end{abstract}

\maketitle

\section*{Introduction}

An important and much studied question in CR geometry is whether an abstract CR manifold can be realised, locally or even globally, as a CR submanifold of $\C^n$, see for example \cite{Andreotti2}, \cite{Boutet} or \cite{Kohn}.
There have also been several works on the topic of CR manifolds with transversal group actions \cite{BRT}, Lempert proved an embedding result for the otherwise difficult $3$-dimensional case assuming the existence of a transversal CR $\R$-action \cite{Lempert}.

There have been more recent results for CR manifolds with transversal $S^1$-action by Herrmann, Hsiao and Li \cite{HsiaoHerrmann} and also an equivariant Kodaira embedding theorem by Hsiao, Li and Marinescu \cite{HsiaoMarinescu}.

Most of the above results are for the case of CR codimension $1$, but the high codimension case is also interesting, as the following example from the theory of transformation groups shows.

Let $(Z, \omega)$ be a Kähler manifold with holomorphic action of a Lie group $G$ which leaves $\omega$ invariant. 
Let $\mu \colon Z \rightarrow \g^*$ be a momentum map such that $0$ is a regular value and define $\mathcal{M} := \mu^{-1}(0)$.
Then $T_x \mathcal{M} \cap i T_x \mathcal{M} = (\C \g (x))^{\perp_\omega}$ and $\mathcal{M}$ is a CR submanifold of $Z$ with transversal $G$-action. Here, $\g  (x)$ denotes the tangent space at $x$ to the orbit $G x$ and $\C \g (x)$ the complex subspace generated by $\g (x)$.

The space $\mathcal{M}$ and the induced complex structure on the quotient $\mathcal{M} /G$ is of high interest in geometric invariant theory.
In the case where $Z$ is a bounded domain in $\C^n$ with Bergmann metric $\omega$ and $G$ a unipotent subgroup of the group of holomorphic isometries of $(Z, \omega)$, very little is generally known about the quotient $\mathcal{M} / G$.  

In this paper we will consider $X$ to be a CR manifold with proper, transversal action of a group $G$ such that $G$ is a subgroup of its universal complexification $G^\C$.
We recall that $G$ is a subgroup of its universal complexification if and only if it is a Lie subgroup of some complex Lie group. 
For example every closed subgroup of a matrix group is a subgroup of a complex group but the universal covering of $SL_2 ( \R)$ is not.  

We systematically start by showing that $X$ may always be embedded into a complex manifold. 
In particular, we say that a complex manifold $Z$ with holomorphic action of $G^\C$ and a $G$-equivariant CR embedding $\Phi \colon X \rightarrow Z$ is the universal equivariant extension of $X$ if every equivariant CR map $f \colon X \rightarrow Y$ into a complex manifold $Y$ with holomorphic $G^\C$-action extends uniquely to a $G^\C$-equivariant holomorphic map on $Z$.

\begin{theoremGlobalExtension}
Let $X$ be a CR manifold with proper, transversal, CR action of a Lie group $G$. Assume that $G$ is a subgroup of its universal complexification.
Then there exists a universal equivariant extension for $X$.
\end{theoremGlobalExtension}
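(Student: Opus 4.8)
The plan is to construct the universal extension explicitly as a quotient, thereby reducing the statement to a \emph{free-action} version of the quotient theorem advertised in the abstract. Equip $G^\C$ with its complex structure as a complex Lie group and form the product CR manifold $X \times G^\C$, with $T^{1,0}(X\times G^\C) = T^{1,0}X \oplus T^{1,0}G^\C$. Let $G$ act on it by $h\cdot(x,\gamma) = (hx,\gamma h^{-1})$. This action is CR, since the action on $X$ is CR and right translations on $G^\C$ are holomorphic; it is \emph{free}, since the $G^\C$-factor destroys all stabilisers regardless of those of the action on $X$; it is \emph{proper} (already forced by the $G^\C$-factor, using that $G$ is closed in $G^\C$); and it is \emph{transversal}, because the orbit directions still project onto $TX/HX$ by transversality of the original action while $H(X\times G^\C) = HX \oplus TG^\C$. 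I then set
\[
  Z := (X\times G^\C)/G , \qquad \Phi\colon X\to Z,\ \ \Phi(x) = [x,e],
\]
let $G^\C$ act on $Z$ by $g\cdot[x,\gamma] = [x,g\gamma]$ (well defined, as left and right multiplication on $G^\C$ commute), and note that $\Phi$ is $G$-equivariant because $(hx,e)$ and $(x,h)$ lie in one $G$-orbit.

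The analytic core is to show that $Z$ is a complex manifold and that the projection $q\colon X\times G^\C \to Z$ is a CR map. Freeness and properness give that $Z$ is a smooth Hausdorff manifold and $q$ a principal $G$-bundle. Transversality and $G$-invariance of $T^{1,0}(X\times G^\C)$ force $dq$ to restrict to an isomorphism onto a well-defined subbundle $T^{1,0}Z\subset \C TZ$ of the correct rank with $T^{1,0}Z\cap\overline{T^{1,0}Z} = 0$; since $T^{1,0}(X\times G^\C)$ is involutive and $G$-invariant local sections of $T^{1,0}Z$ lift to $G$-invariant sections upstairs whose brackets push forward, $T^{1,0}Z$ is involutive, hence an integrable complex structure by Newlander--Nirenberg (in the real-analytic category this last step is elementary). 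By construction $dq$ maps $T^{1,0}(X\times G^\C)$ onto $T^{1,0}Z$, so $q$, and hence $\Phi = q|_{X\times\{e\}}$ (where $X\times\{e\}$ is a CR submanifold carrying the original CR structure of $X$), is a CR map; it is a genuine closed embedding because $X\times\{e\}$ is transverse to the orbits and $G\cdot(X\times\{e\}) = X\times G$ is closed in $X\times G^\C$. Finally the holomorphic $G^\C$-action on $Z$ is the descent of the holomorphic left-multiplication action on the $G^\C$-factor.

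The universal property is then formal. Given an equivariant CR map $f\colon X\to Y$ into a complex manifold $Y$ with holomorphic $G^\C$-action, define $F\colon X\times G^\C\to Y$ by $F(x,\gamma) = \gamma\cdot f(x)$. Then $F$ is CR — on the $X$-factor because $f$ is CR and $\gamma$ acts holomorphically, on the $G^\C$-factor because $F(x,\cdot)$ is an orbit map of the holomorphic action — and $G$-invariant by equivariance of $f$, so it factors as $F = \tilde f\circ q$ with $\tilde f\circ\Phi = f$; being CR on the complex manifold $Z$, the map $\tilde f$ is holomorphic, and it is $G^\C$-equivariant by construction. Uniqueness is immediate: $Z = G^\C\cdot\Phi(X)$ because $[x,\gamma] = \gamma\cdot\Phi(x)$, so any $G^\C$-equivariant holomorphic map on $Z$ restricting to $f$ on $\Phi(X)$ must agree with $\tilde f$ everywhere. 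Hence $(Z,\Phi)$ is a universal equivariant extension.

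The main obstacle I expect is the descent of the complex structure: checking carefully that $dq$ really does produce a smooth, constant-rank, involutive subbundle $T^{1,0}Z$ with $T^{1,0}Z\cap\overline{T^{1,0}Z}=0$, and that properness genuinely holds in this CR setting, so that Newlander--Nirenberg can be applied on a Hausdorff manifold. This is precisely the free case of the quotient theorem, and once it is in place the construction of $Z$ and the verification of the universal property are soft. A secondary point is to record that a proper transversal CR action is automatically locally free, so the orbit directions have the expected dimension; on $X\times G^\C$ this becomes moot since that action is outright free, which is one reason for routing the argument through $X\times G^\C$ rather than through a direct gluing of local Baouendi--Rothschild--Treves normal forms.
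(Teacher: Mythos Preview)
Your argument hinges on the claim that a proper transversal CR action is automatically locally free, and this is false. Transversality forces $\dim_\R \g(x) = d$ for all $x$, but it says nothing about $\dim\g$ itself; the isotropy algebras $\g_x$ may well be positive-dimensional. A concrete counterexample: let $G = SO(3)$ act on $X = S^2 \times \C^n$ via the standard action on $S^2$ and trivially on $\C^n$, with the CR structure coming from the $\C^n$-factor. This action is proper, CR, transversal (the orbit directions fill out $TS^2$, which is the full complement of $T^{1,0}X \oplus T^{0,1}X$), yet every isotropy group is a copy of $SO(2)$.

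Once local freeness fails, so does your transversality claim on $X \times G^\C$. If $\xi \in \g_x \setminus \{0\}$ then the orbit direction at $(x,\gamma)$ is $(0, -dL_\gamma(\xi))$, which lies entirely in $TG^\C \subset H(X\times G^\C)$; thus $\g((x,\gamma)) \cap H(X\times G^\C) \neq 0$ and the quotient has no reason to be complex. In the $SO(3)$ example your $Z$ is $\C^n \times (SO(3,\C)/SO(2))$, which is odd-dimensional. The paper's construction avoids exactly this by forming the local slice extension $\Omega^\C = G^\C \times^{L^\C} S$, quotienting by the \emph{complexified} isotropy $L^\C$ rather than by $L$; this is what makes the dimensions match and the complex structure descend. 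The paper then glues the $\Omega^\C$ along the local universal properties. Your approach does give the right object when the action is genuinely locally free, and indeed the paper records precisely that in its closing Remark to Section~1: in that case $G^\C \times^G X$ is the universal equivariant extension. But for the theorem as stated you need to handle positive-dimensional isotropy, and the route through $X \times G^\C$ does not do that.
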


We use this to show that the quotient space $X/G$ carries the structure of a complex space such that the sheaf of holomorphic functions on $X/G$ is given by the sheaf of $G$-invariant CR functions on $X$ (Theorem \ref{QuotientComplex}).
In section \ref{SectionPseudoconvex}, we will generalise the notion of strong pseudoconvexity to CR manifolds with transversal of codimension one group action.  
We will then proof a connection between strongly pseudoconvex CR manifolds and $S^1$-bundles in positive orbifold line bundles (see Theorem \ref{OrbifoldBundle}).

Using the quotient result and methods from complex geometry, we proof the following equivariant embedding theorem.
Let $H$ be a closed subgroup of its universal complexification, such that $H^\C$ is complex reductive and $H = G \rtimes S^1$, then we have $H^\C = G^\C \rtimes \C^*$. 
Let $X$ be a CR manifold with proper, transversal, CR action of $H$ such that $H^0_x < G_x^0$ for every $x \in X$ and let $Y$ be the universal equivariant exension of $X$.

\begin{theoremPE}
Under the assumptions above, let $X$ be strongly pseudoconvex and $X/G$ be compact.
Then there exists a $H^\C$-representation $V$ and a $H^\C$-equivariant holomorphic embedding $\Phi \colon Y \rightarrow \C^m \backslash \{0\} \times V$, such that $\Phi|_X \colon X \rightarrow \C^m \times V$ is an embedding. Here, $\C^m$ is the trivial $G^\C$-representation and decomposes into irreducible $\C^*$-representations with positive weights.
\end{theoremPE}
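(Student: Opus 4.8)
The plan is to reduce the equivariant embedding of the non-compact extension $Y$ to a projective embedding of the compact quotient $X/G$, and then pull everything back. First I would invoke Theorem~\ref{QuotientComplex} to realise $X/G$ as a compact complex space, and Theorem~\ref{OrbifoldBundle} to identify $X$ (or rather the circle bundle inside $X$) as the unit circle bundle of a positive orbifold line bundle $L$ over $X/G$; strong pseudoconvexity of $X$ is exactly what makes $L$ positive. By an orbifold version of the Kodaira embedding theorem, a sufficiently high power $L^{\otimes k}$ is very ample, so its sections give a holomorphic embedding $X/G \hookrightarrow \mathbb{P}(W^*)$ for some finite-dimensional space $W$ of sections. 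The total space of $L^{\otimes k}$ minus its zero section then embeds $G^\C$-equivariantly into $W \setminus \{0\}$, and this is the source of the factor $\C^m$ with positive $\C^*$-weights: the $S^1$-action on the circle bundle scales the fibres of $L$, so $\C^*$ acts on $W$ by the (positive) weights coming from the tensor power. Here $\C^m$ carries the trivial $G^\C$-action because $W$ is built from sections over the quotient $X/G$, on which $G$ acts trivially.

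The second ingredient handles the non-triviality in the $G^\C$-direction. Working on $X/G$, which is a complex space with no residual group action, one still needs enough $G$-invariant CR functions on $X$ — equivalently holomorphic data on $Y$ — to separate points and tangent vectors of $X$ lying in the same $G$-orbit and to separate nearby orbits in a way compatible with the $H^\C$-action. The systematic device is the universal equivariant extension: by the universal property (Theorem~\ref{GlobalExtension}), any $H$-equivariant CR map from $X$ into an $H^\C$-manifold extends uniquely and $H^\C$-equivariantly to $Y$, so it suffices to produce enough equivariant CR maps on $X$ into $H^\C$-representations. One builds these from orbifold sections of $L^{\otimes k}$ twisted by finite-dimensional $H^\C$-representations: since $H^\C = G^\C \rtimes \C^*$ is reductive, the space of such sections is a finite-dimensional $H^\C$-module $V$, and the evaluation map $X \to \C^m \times V$ (or $Y \to (\C^m\setminus\{0\}) \times V$ on the extension) is $H^\C$-equivariant by construction. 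The hypothesis $H^0_x < G^0_x$ for all $x$ guarantees that the $\C^*$-part of the stabiliser is discrete, so that the circle action really is locally free where it needs to be and the orbifold picture is valid.

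To finish one assembles $\Phi$ from these pieces and checks it is an embedding. The map into $\C^m$ already separates $G$-orbits and is an immersion transverse to them (this is the Kodaira step on $X/G$); the map into $V$ must be chosen to separate points within a fixed fibre of $X \to X/G$ and to be injective on the corresponding tangent spaces. Concretely, for each pair of points and each tangent vector one selects a twisting representation and a section doing the required separation; by compactness of $X/G$ finitely many such choices suffice, and one takes $V$ to be their direct sum. Injectivity and immersivity of the combined map $\Phi|_X$ then follow from a routine gluing argument, and $H^\C$-equivariance of $\Phi$ is automatic from the universal property applied to the (by then already constructed) equivariant CR map $\Phi|_X$. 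The image lands in $\C^m \setminus \{0\}$ on $Y$ because the zero section of $L^{\otimes k}$ is precisely what is removed when passing from the disc bundle to the complex manifold $Y$.

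I expect the main obstacle to be the second step: producing enough $H^\C$-equivariant holomorphic data to separate points and tangents \emph{within} $G$-orbits while keeping the positive-weight and trivial-$G^\C$-action structure on the $\C^m$-factor intact. On the compact quotient one has Kodaira for free, but the fibre directions of $X \to X/G$ carry the whole group action, and controlling them requires a careful choice of twisting $H^\C$-representations together with a vanishing theorem (Kodaira–Serre type) on $X/G$ with coefficients in $L^{\otimes k}$ tensored with the (orbifold) bundles associated to those representations, for $k$ large. Ensuring the resulting $V$ is genuinely $H^\C$-stable and finite-dimensional uses reductivity of $H^\C$ in an essential way.
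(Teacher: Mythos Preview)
Your proposal confuses the two quotients. The $G$-action on $X$ is transversal \emph{of codimension one}, not transversal, so Theorem~\ref{QuotientComplex} does not make $X/G$ a complex space; the complex base is $X/H = Y/H^\C$, and $X/G$ is the circle bundle inside the orbifold line bundle $Z = \C\times^{S^1}(X/G) \to X/H$. Your sentence ``working on $X/G$, which is a complex space with no residual group action'' repeats the error: $X/G$ still carries the $S^1$-action, and Theorem~\ref{OrbifoldBundle} (which moreover is stated for direct products, whereas here strong pseudoconvexity is \emph{defined} as pseudoconvexity of $Z_\Delta$) produces a bundle over $X/H$, not over $X/G$. Once this is straightened out, your outline --- Kodaira-embed $X/H$, lift to $Y/G^\C \hookrightarrow \C^m\setminus\{0\}$, then add equivariant maps into a representation $V$ to resolve the $G^\C$-direction --- is a plausible strategy, but it is not the paper's.

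The paper does not pass through a projective embedding of the base. It applies Grauert's finiteness theorem directly to the pseudoconvex domain $Z_\Delta\subset Z$ (Proposition~\ref{CohomologyFinite}) to obtain a vanishing result for sheaves on $Y/H^\C$ twisted by high $\C^*$-weight (Corollary~\ref{VanishingCohomology}), and uses this to globalise \emph{local} equivariant immersions coming from holomorphic slices: near each $y$ one embeds the affine variety $H^\C\times^{(H_y)^\C}T_yY$ into a representation and the pair $(h_C^{ml}\varphi,\,h_C^{(m+1)l}\varphi)$ handles the $G^\C$- and $\C^*$-directions simultaneously (Proposition~\ref{LocalImmersionDependent}, Lemma~\ref{LocalProper}). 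The role of the $\C^m$-factor is also weaker than you claim: the map $F:Y\to\C^m$ is merely a nowhere-vanishing $G^\C$-invariant map, used to force properness; the immersion and the separation of points --- both across and within orbits --- are carried by the local maps $\Phi_y$ and by extra separating functions $F_{z,w}$, all manufactured from the same vanishing theorem applied to the sheaves~(\ref{SheafOne}) and~(\ref{SheafTwo}). So your division of labour between $\C^m$ and $V$ does not match the argument, and your route would in any case require an independent orbifold Kodaira theorem that the paper's Grauert-type machinery effectively builds from scratch.
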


In chapter \ref{ProjectiveEmbedding}, we use these techniques for a proof of a Kodaria-type embedding theorem for CR manifolds.

\begin{theoremLBE}
Let $X$ be a compact CR manifold with a transversal CR action of a compact Lie group $K$.
Assume that there exists a weakly negative line bundle $L_K \rightarrow X/K$ and let $L \rightarrow X$ be the induced CR line bundle.
Then there exists a natural number $k$ and finitely many CR sections $s_i \in \Gamma (X, L^{-k})$ such that, for $W = \textrm{span}(s_i)$, we have that
\begin{gather*}
X  \rightarrow \mathbb{P}(W^*) \\
y \mapsto [s \mapsto s(y)]
\end{gather*}
is a CR embedding.
\end{theoremLBE}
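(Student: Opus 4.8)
The plan is to push the problem down to the complex space $M:=X/K$, solve it there by the classical theory of weakly negative line bundles, and then lift the sections back up to $X$ using the representation theory of the compact group $K$. First, by Theorem \ref{QuotientComplex}, $M:=X/K$ is a compact complex space whose structure sheaf is the sheaf of $K$-invariant CR functions and for which the quotient map $\pi\colon X\to M$ is CR. I would then observe that, since $L=\pi^{*}L_{K}$ is pulled back from $M$, the $K$-action lifts canonically to $L$ and is fibrewise trivial over each point of $M$; in particular every stabiliser $K_{x}$ acts trivially on $L_{x}$, the group $K$ acts linearly on $\Gamma(X,L^{-k})$, and $\Gamma(X,L^{-k})^{K}=\Gamma(M,L_{K}^{-k})$. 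More generally, for an irreducible $K$-representation $\rho$ a direct computation (the equivariant version of Theorem \ref{QuotientComplex}) identifies the $\rho$-isotypic part of $\Gamma(X,L^{-k})$ with $V_{\rho}\otimes\Gamma(M,L_{K}^{-k}\otimes\mathcal{F}_{\rho})$, where $\mathcal{F}_{\rho}$ is a coherent $\O_{M}$-module \emph{independent of} $k$ (the multiplicity sheaf of $\rho$ in $\pi_{*}\O^{\mathrm{CR}}_{X}$); the factor $L_{K}^{-k}$ splits off precisely because $L$ is a pullback and the lifted action is fibrewise trivial.

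Next I would invoke Grauert's criterion: because $L_{K}$ is weakly negative over the compact complex space $M$, the space $M$ is projective, $L_{K}^{-k}$ is very ample for all $k\ge k_{0}$, and the twist $L_{K}^{-k}\otimes\mathcal{F}$ of any fixed coherent sheaf is globally generated once $k\gg0$ (Serre vanishing). Pulling back a basis of $\Gamma(M,L_{K}^{-k})$ then produces $K$-invariant CR sections of $L^{-k}$ that already separate distinct $K$-orbits and give an embedding of $M$; what remains is to separate points lying on one and the same orbit, together with all tangent vectors of $X$.

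The heart of the argument is this last separation. Fix $x\in X$. Its orbit $Kx\cong K/K_{x}$ is totally real in $X$ (by transversality the orbit directions meet the CR distribution trivially), and $L^{-k}$ is canonically trivial along it, so CR sections of $L^{-k}$ restrict to $Kx$ as ordinary smooth functions, i.e.\ as elements of $C^{\infty}(K/K_{x})$, which contains $\bigoplus_{\rho}V_{\rho}\otimes(V_{\rho}^{*})^{K_{x}}$ as a dense subspace. By Peter--Weyl only finitely many $\rho$ are needed for the matrix coefficients to separate the points and tangent vectors of the compact homogeneous space $K/K_{x}$; and by the global generation of the sheaves $L_{K}^{-k}\otimes\mathcal{F}_{\rho}$ for $k$ large, each such matrix coefficient is the restriction of a global CR section of $L^{-k}$. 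Hence $\Gamma(X,L^{-k})$ separates the points of $Kx$ and all tangent directions along $Kx$; the directions of $X$ at $x$ transverse to $Kx$ form a slice modelling $M$ near $\pi(x)$ together with its residual $K_{x}$-action, and these are separated by the pulled-back very ample sections of $L_{K}^{-k}$ together with the $\rho$-equivariant ones just produced. So $\Gamma(X,L^{-k})$ separates points and tangent vectors of $X$ on a neighbourhood of each orbit.

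The final step is a compactness argument: covering $X$ by finitely many such orbit-neighbourhoods, and passing to a common exponent $k$ by replacing sections $s$ by suitable tensor powers $s^{\otimes m}$ (again CR), one gets finitely many CR sections $s_{1},\dots,s_{N}\in\Gamma(X,L^{-k})$ which are base-point-free and for which the induced map $X\to\mathbb{P}(W^{*})$, $W=\operatorname{span}(s_{i})$, is injective and immersive; it is a CR map because the ratios $s_{i}/s_{j}$ are CR functions wherever defined, and a CR embedding because $X$ is compact. The hard part is the descent step itself: one must make the dictionary between $\bar\partial_{b}$-CR sections on $X$ and holomorphic sections of the twisted coherent sheaves $L_{K}^{-k}\otimes\mathcal{F}_{\rho}$ on the possibly singular complex space $M$ completely precise (coherence of $\mathcal{F}_{\rho}$, compatibility with slices, regularity of CR sections) and then control it uniformly over the finitely many representations $\rho$ that enter, so as to extract a single exponent $k$; the passage from ``weakly negative'' to ``very ample power'' on a singular compact complex space via Grauert's theorem is the other essential external input.
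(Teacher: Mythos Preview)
Your approach is essentially correct but genuinely different from the paper's. The paper never decomposes $\Gamma(X,L^{-k})$ into $K$-isotypes or works with multiplicity sheaves on $M=X/K$. Instead it passes to the unit $S^1$-bundle $X_S\subset L$, on which $S^1\times K$ acts transversally, and identifies $L_K$ with $\C\times^{S^1}(X_S/K)$. The weak negativity of $L_K$ then says exactly that the disc bundle is strongly pseudoconvex, so the paper's own vanishing result (Corollary~\ref{VanishingCohomology}, applied to the sheaves $\F^Y_{z,z}$ and $\F^Y_{z,w}$ of \eqref{SheafOne}--\eqref{SheafTwo}) produces, for large $d$, $K^\C$-equivariant holomorphic maps $L\to V_d$ with prescribed $1$-jet along a given orbit or prescribed values on two orbits. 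These are then rewritten as CR sections of $L^{-d}$, giving the exactness of the sequences \eqref{SequenceOne}--\eqref{SequenceTwo}; the final compactness argument is the same as yours.

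What each route buys: the paper's argument stays entirely inside the machinery of Section~3 and treats orbit-separation and transverse-immersion in one stroke via equivariant maps into a single $K^\C$-representation, so it never needs to prove coherence of any auxiliary sheaf beyond what Roberts' theorem already gives. Your route is conceptually cleaner and closer to the classical Kodaira pattern: once you know $\mathcal{F}_\rho$ is coherent and compute its fibre at $\pi(x)$ to be $(V_\rho^*)^{K_x}$, everything reduces to Serre vanishing and Grauert's projectivity criterion on the compact complex space $M$, and equivariance comes for free. The price is that you must justify precisely the two points you flag at the end: that $\mathcal{F}_\rho=\mathrm{Hom}_K(V_\rho,\pi_*\O_Y)$ is coherent on the possibly singular $M$ (this follows from the local slice model $Y\cong K^\C\times^{K_x^\C}S_C$ with $K_x^\C$ acting through a finite quotient, giving $\mathcal{F}_\rho\cong(V_\rho^*\otimes\O_{S_C})^{K_x^\C}$ locally), and that a single finite set of $\rho$'s suffices for all orbits (this uses that a compact manifold with proper action has only finitely many isotropy types). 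For the immersion step you also need $1$-jet surjectivity, i.e.\ global generation of $L_K^{-k}\otimes\mathcal{F}_\rho\otimes(\O_M/\mathfrak m^2)$, not just of $L_K^{-k}\otimes\mathcal{F}_\rho$; this again follows from Serre vanishing but should be said explicitly.
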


Moreover we will show that the above embedding can be chosen to be $K$-equivariant. 

\section{Quotients}\label{SectionPreliminaries}

Let $X$ be a smooth manifold and $G$ a Lie group with smooth action on $X$.
For an element $\xi \in \g$, we denote by $\xi_X (x) := \frac{d}{dt}\big|_0 \textrm{exp}( t \xi) x$ the fundamental vector field of $\xi$ on $X$.
We set $\g (x) := \{ \xi_X (x) \, | \, \xi \in \g \}$ and say that the action is locally free if $\xi_X (x) \neq 0$ for every $\xi \in \g \backslash \{0\}$.\\
For a manifold $X$, we write $\C TX$ for the complexified tangent bundle. \\
Let $T^{1,0} X$ be a smooth complex subbundle of $\C TX$ such that $T^{1,0} X \cap \overline{T^{1,0} X } = \{0\}$ and $[\Gamma ( U, T^{1,0} X), \Gamma ( U , T^{1,0} X) ] \subset \Gamma (U,T^{1,0} X)$ for every open subset $U$ of $X$. 
Here, $\Gamma (U, T^{1,0} X)$ denote the smooth sections into $T^{1,0} X$ on $U$.
Set $n = $dim$_\C T^{1,0} X$ and $d = $dim$_\R X - 2n$, then we call $(X, T^{1,0} X)$ a CR manifold of dimension $(2n,d)$.
We write $T^{0,1}X := \overline{T^{1,0} X}$.\\
A typical example is given by a real submanifold $X$ of a complex submanifold $Z$ such that dim$(T_x X \cap J T_x X)$ is constant in $x$, where $J$ denotes the complex structure on $Z$.\\
Let $(X, T^{1,0}X)$ be a CR manifold with a CR action of a Lie group $G$. 
We call the action transversal, if $\C \g (x) \oplus T^{1,0}_x X \oplus T^{0,1}_x X = \C T_x X$ for every $x \in X$, where $\C \g (x)$ is the complex subspace in $\C T_x X$ generated by $\g (x)$.\\

A result of Loose states that if the action is proper, free and fulfils $\C \g (x) \cap T^{1,0}_x X \oplus T^{0,1}_x X = \{0\}$, then the quotient $X/G$ may be equipped with a CR structure such that the projection $X \rightarrow X/G$ is a CR map  \cite[Theorem 1.1]{LooseCR}.
Our first goal is to show that if the action is proper and transversal, but not necessarily free, then the quotient $X/G$ is a complex space.\\

For the proof, we first recall some basic techniques regarding quotients in the smooth case.

Let $X$ be a smooth manifold and $G$ a Lie group with proper and free action on $X$.
For every point $x \in X$, we find a smooth submanifold $S$ of $X$ with $x \in S$ such that the map $G \times S \rightarrow X$, $(g,s) \mapsto gs$ is a diffeomorphism onto an open subset $U$ of $X$ \cite[Theorem 2.3.3]{Palaisslice}.
The quotient map $\pi \colon X \rightarrow X/G$ induces a homeomorphism $\pi|_S \colon S \rightarrow \pi(U)$, which defined on $X/G$ the structure of a smooth manifold.

If $G$ is a closed subgroup of a Lie group $H$ and $G$ acts on a manifold $X$, then $G$ acts proper and free on $H \times X$ via $(g,(h,x)) \mapsto (hg^{-1}, gx)$, hence $H \times^G X := (H \times X) /G$ is a manifold.

This lets us formulate the general slice theorem as follows.
Let $G$ act on $X$ properly, then around every $x \in X$, there exists a slice, i.e. a smooth, $G_x$-invariant submanifold $S$ of $X$ such that the map $G \times^{G_x} S \rightarrow X$, $[g,s] \mapsto gs$ is a diffeomorphism onto an open subset of $X$ \cite[Theorem 2.3.3]{Palaisslice}.\\

On CR manifolds, it is in general not possible to construct a reasonable CR structure on the quotient using slices directly.
More precisely, if $G$ acts freely and transversally on a CR manifold $X$ and $S$ is a slice at $x \in X$ such that $\C T_x S = T^{1,0}_x X \oplus T^{0,1}_x X$, then this would imply that $T^{1,0} X \oplus T^{0,1} X$ is involutive and $X$ is flat. This fails in general.\\

We will use the following Lemma to construct complex structures on quotient spaces.
For a smooth map $f \colon X \rightarrow Y$ between manifolds, we may extend $d f$ to a $\C$-linear map $d f \colon \C T X \rightarrow \C TY$. 
For the sake of simplicity, we will denote the extension with $df$, as well.

\begin{lemma}\label{RelatedSections}
Let $X,Y$ be smooth manifolds and $\pi \colon X \rightarrow Y$ a surjective submersion. 
Let $E$ be a smooth, complex subbundle of $\C TX$.
For every $y \in Y$, let $F_y$ be a complex subspace of $\C T_y Y$ such that $d_x \pi \colon E_x \rightarrow F_{\pi(x)}$ is an isomorphism for every $x \in X$.
Then $F = \bigcup_y F_y$ is a smooth subbundle of $\C TY$ and for every $x \in X$, there exist open neighbourhoods $U$ of $x$ and $\Omega$ of $\pi(x)$ with $\pi(U) = \Omega$, such that for every smooth section $W \in \Gamma( \Omega, F)$, there exists a smooth, $\pi$-related section $V \in \Gamma( U,E)$, i.e. we have $d \pi \circ V = W \circ \pi$.
\end{lemma}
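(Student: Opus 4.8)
The plan is to work locally using a submersion chart for $\pi$, and to split the argument into two parts: first showing $F$ is a smooth subbundle, then constructing the $\pi$-related lift $V$ of a given section $W$. For the first part I would fix $x \in X$ and choose coordinates in which $\pi$ looks like the standard projection $\R^{k+m} \to \R^m$, $(s,t) \mapsto t$; then $\ker d\pi$ is spanned by the coordinate fields $\partial/\partial s_j$, and since $d_x\pi$ restricted to $E_x$ is injective, $E$ is transverse to $\ker d\pi$ near $x$. Hence I can pick local smooth frames $V_1,\dots,V_r$ of $E$ on a neighbourhood $U$ of $x$ and write $d\pi(V_i(s,t)) = \sum_j a_{ij}(s,t)\,\partial/\partial t_j$; the condition that $d_x\pi\colon E_{(s,t)} \to F_t$ be an isomorphism forces the $a_{ij}$ to depend only on $t$ after a change of frame — more precisely, $F_t$ is spanned by $d\pi(V_i(s,t))$ and independent of $s$, so the span is constant along fibres and I can define a local frame of $F$ by pushing forward the $V_i$ along any fixed section of $\pi$. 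This exhibits $F$ as a smooth subbundle on $\Omega := \pi(U)$ (shrinking $U$ so that its image is open).

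For the second part, given $W \in \Gamma(\Omega, F)$ I want $V \in \Gamma(U,E)$ with $d\pi \circ V = W \circ \pi$. In the submersion chart, expand $W = \sum_i b_i(t)\, W_i$ in the frame $W_i := d\pi(V_i)$ of $F$ just constructed, where the $b_i$ are smooth; then set $V := \sum_i (b_i \circ \pi)\, V_i \in \Gamma(U,E)$. Because $d_x\pi\colon E_x \to F_{\pi(x)}$ is an isomorphism for \emph{every} $x$, the pointwise identity $d\pi(V(x)) = \sum_i b_i(\pi(x))\, W_i(\pi(x)) = W(\pi(x))$ holds, which is exactly $\pi$-relatedness; smoothness of $V$ is immediate since $\pi$ and the $b_i$ and $V_i$ are smooth. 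The uniqueness of the fibrewise coefficients follows from injectivity of $d_x\pi$ on $E_x$, so the construction is forced once the frames are chosen.

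I expect the only real subtlety to be the first part: verifying that the spaces $F_y$, a priori only a field of subspaces, actually fit together into a smooth bundle. The key observation making this work is that the hypothesis ``$d_x\pi\colon E_x \to F_{\pi(x)}$ is an isomorphism for every $x$ with $\pi(x) = y$'' means $F_y$ is simultaneously the image of $E_x$ for \emph{all} $x$ in the fibre over $y$; this built-in consistency is what lets the local frame on $E$ descend, and it is also what rules out the naive worry that pushing forward along different sections of $\pi$ could give different subspaces. Once this is in hand, the lifting statement is essentially bookkeeping, and the neighbourhoods $U$, $\Omega$ are just those provided by the submersion chart (with $U$ shrunk so that $\pi(U)$ is open, which is automatic as $\pi$ is open).
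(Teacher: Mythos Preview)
Your overall plan matches the paper's (local submersion chart, push forward a frame of $E$ to get a frame of $F$, lift sections by linear algebra), and your argument that $F$ is a smooth subbundle is correct. However, there is a genuine gap in the lifting step. You define the frame $W_i$ of $F$ by pushing forward the $V_i$ along a \emph{fixed} local section $\sigma$ of $\pi$, so that $W_i(t) = d_{\sigma(t)}\pi\bigl(V_i(\sigma(t))\bigr)$, and then set $V := \sum_i (b_i\circ\pi)\,V_i$. But the identity $d\pi(V(x)) = \sum_i b_i(\pi(x))\,W_i(\pi(x))$ you assert requires $d\pi(V_i(s,t)) = W_i(t)$ for \emph{all} $s$, not just on the image of $\sigma$; the hypothesis only guarantees that the vectors $d\pi(V_i(s,t))$ \emph{span} $F_t$, not that they are independent of $s$. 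Concretely: take $X=\R^2$, $Y=\R$, $\pi(s,t)=t$, and $E$ spanned by $V_1(s,t) = \partial_s + (1+s)\,\partial_t$ (restricted to $s>-1$). Using the section $s=0$ gives $W_1=\partial_t$; for $W = \partial_t$ your lift is $V = V_1$, yet $d\pi(V(s,t)) = (1+s)\,\partial_t \neq W(t)$ for $s\neq 0$.

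There are two fixes. One is to actually carry out the ``change of frame'' you allude to but do not execute: replace $V_i$ by $\tilde V_i(x) := (d_x\pi|_{E_x})^{-1}\bigl(W_i(\pi(x))\bigr)$, which is smooth (once $F$ is known to be smooth, $d\pi$ gives a smooth bundle isomorphism $E\to\pi^*F$, and its inverse is smooth) and satisfies $d\pi(\tilde V_i) = W_i\circ\pi$ by construction; then your formula works with $\tilde V_i$ in place of $V_i$. The other, which is what the paper does, is to let the coefficients depend on the fibre variable: for each fibre coordinate $w$ separately, expand $W(z) = \sum_i f_i^w(z)\,W_i^w(z)$ in the $w$-dependent basis $W_i^w(z) := d_{(z,w)}\pi(V_i(z,w))$ of $F_z$, and set $V(z,w) := \sum_i f_i^w(z)\,V_i(z,w)$. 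Then $d\pi(V(z,w)) = W(z)$ is immediate, and the $f_i^w(z)$ are smooth in $(z,w)$ because they are obtained by inverting a smoothly varying invertible matrix.
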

\begin{proof}
Denote by $B^n$ the ball with radius $1$ in $\R^n$.

Since the result is local in $X$, we may assume that $X = B^n \times B^d$, $Y = B^n$ and $\pi$ is the projection onto the first component.
We may also assume that we find smooth sections $V_i \in \Gamma( X, E)$, $i = 1,..,k$, such that $V_i(p)$ form a complex basis for $E_p$ in every point $p \in X$.

Now fix $w \in B^d$, set $s_w \colon B^n \rightarrow B^n \times B^d$, $z \mapsto (z,w)$ and define $W_i^w (z) := d_{(z,w)} \pi ( V_i( s_w(z)) )$.
Since $d \pi \colon E \rightarrow F$ is an isomorphism in every point, the $W_i^w(z)$ define a basis for $F_w$ for every $w \in B^d$ and $z \in B^n$ which depends smoothly on $w$ and $z$.
In particular, the space $F = \bigcup_w F_w$ defines a smooth subbundle of $\C TY$.\\

Now let $W \in \Gamma (Y,F)$ be a smooth section. 
For $w \in B^d$, we write $W (z) = \sum_i f_i^w (z) W_i^w (z)$ for complex-valued smooth functions $f_i^w$. 

Then the $\pi$-related smooth section is given by $V(z,w) := \sum_i f_i^w (z) V_i (z,w)$.
\end{proof}

We formalise the notion of CR embeddings.

\begin{definition}
Let $X$, $Y$ be CR manifolds. A map $\Phi \colon X \rightarrow Y$ is called a \textbf{CR embedding} if it is a smooth embedding of $X$ into $Y$ and
\begin{gather*}
d \Phi( T^{1,0}X) = d \Phi(\C TX) \cap T^{1,0} Y,
\end{gather*}
 i.e. $\Phi(X)$ is a CR submanifold of $Y$.
We require all embeddings to be \textbf{closed}.
\end{definition}

Note that in the case where $X$ is of CR codimension $1$, it suffices that $\Phi$ is an embedding and a CR map.
For higher CR codimension however, this is not true. \\

If a CR manifold $X$ may be embedded into a complex manifold $Z$, it is an interesting question whether the CR functions on $X$ may (locally) be extended to holomorphic functions on $Z$.

If $H$ is a complex Lie group acting on a complex manifold $Z$, we say that the action is holomorphic if the action map $H \times Z \rightarrow Z$ is holomorphic.

\begin{definition}
Let $G$ be a Lie group and $X$ a CR manifold with transversal action of $G$. 
Let $Z$ be a complex manifold with holomorphic $G^\C$-action and $\Phi \colon X \rightarrow Z$ a $G$-equivariant CR map.
We say that $(Z, \Phi)$ is a universal equivariant extension of $X$ if for every complex manifold $Y$ with holomorphic $G^\C$-action and every $G$-equivariant CR map $f \colon X \rightarrow Y$, there exists a unique $G^\C$-equivariant holomorphic map $F \colon Z \rightarrow Y$ with $f = F \circ \Phi$.
\end{definition}

If the universal equivariant extension exists, it is unique up to a $G^\C$-equivariant biholomorphic map.\\

From now on, denote by $G^\C$ the universal complexification of $G$ and assume that $G$ is a subgroup of $G^\C$. 
This is for example the case if there exists any injective morphism of $G$ into a complex Lie group.
In particular, it is the case for every linear group.
Now $G$ being a subgroup of $G^\C$ implies that $G$ is a totally real, closed subgroup of $G^\C$ \cite[§1 Proposition]{HCom}.

Let $X$ be a CR manifold with proper, transversal CR action of $G$.
For a Lie group $G$, we will denote by $G^0$ the connected component of the identity, which is a normal Lie subgroup of $G$.

Let $S$ be a (smooth) slice at $x \in X$ and $L = G_x$. Because all $G$-orbits are of the same dimension, all isotropy groups $G_s$ for $s \in S$ have to contain $L^0$.
Since $L^0$ acts trivially on $S$, we get an $L/L^0$-action on $S$ and the $L$-orbits on $S$ are finite. 

Since $L$ is compact, we have $L^\C = L \textrm{exp} (i \mathfrak{l})$ and every connected component of $L^\C$ intersects $L$. 
We conclude that $L^\C / (L^0)^\C = L / L^0$ and get an $L^\C$-action on $S$ as a finite group.

Because of \cite[§3 Corollary 1]{HCom}, we see that $L^\C$ is a closed complex subgroup of $G^\C$.
For $\Omega = G \times^L S$, define the smooth manifold 
\begin{equation*}
\Omega^\C := G^\C  \times^{L^\C} S.
\end{equation*}

Note that for $s \in S$, we get $(L_s)^\C = (L^\C)_s$.
Hence for $g \in G$ and $[g,s] \in G^\C \times^{L^\C} S$, we conclude $(G^\C)_{[g,s]}= g (L^\C)_s g^{-1} = g(L_s)^\C g^{-1} =  (G_{[g,s]})^\C$.

We call $\Omega^\C$ the \textbf{extension of the slice $S$} or just a \textbf{slice extension around $x$}. \\

We want to show that $\Omega$ may be embedded into $\Omega^\C$ and start with the following Lemma.

\begin{lemma}\label{GeneralEmbedding}         
Let $G$ be a closed subgroup of $H$. 
Let $H_0$ be a closed subgroup of $H$, define the subgroup $G_0 := G \cap H_0$ and assume that the image of the map $G/G_0 \rightarrow H / H_0$ is closed.
Let $X$ be a smooth manifold with $H_0$-action, then the map $\Phi \colon G \times^{G_0} X \rightarrow H \times^{H_0} X$ is an embedding. 
\end{lemma}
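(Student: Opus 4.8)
The plan is to realise $\Phi$ as an isomorphism of fibre bundles onto the restriction of $p\colon H\times^{H_0}X\to H/H_0$ over a submanifold of the base, reducing the statement to the behaviour of the induced map of base spaces $\bar\Phi\colon G/G_0\to H/H_0$, $gG_0\mapsto gH_0$. Observe first that $G_0=G\cap H_0$ is closed in $G$, so $G\times^{G_0}X$ is a smooth manifold and $\Phi([g,x]):=[g,x]$ is well defined because $G_0\subset H_0$.

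I would start with the algebraic part, which uses no hypothesis on $G/G_0$. If $[g_1,x_1]=[g_2,x_2]$ in $H\times^{H_0}X$, then $h_0:=g_2^{-1}g_1\in H_0$; since $h_0\in G$ as well we get $h_0\in G\cap H_0=G_0$, and the classes already coincide in $G\times^{G_0}X$, so $\Phi$ is injective. The same bookkeeping shows that the image of $\Phi$ is exactly $p^{-1}(A)$, where $A:=\{gH_0:g\in G\}$ is the image of $\bar\Phi$: any class $[h,x]$ with $hH_0\in A$ can be rewritten as $[g,h_0x]$ with $g\in G$.

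Next I would analyse $\bar\Phi$. It is the orbit map through $eH_0$ of the left $G$-action on $H/H_0$, whose stabiliser of $eH_0$ is precisely $G\cap H_0=G_0$; since this orbit map is equivariant it has constant rank, so $\bar\Phi$ is an injective immersion onto $A$. This is the point where the hypothesis enters: as $A$ is closed it is locally closed, and a locally closed group orbit is an embedded submanifold, so $A$ is a closed submanifold of $H/H_0$ and $\bar\Phi$ is a diffeomorphism onto it. Consequently $p^{-1}(A)$ is a closed submanifold of $H\times^{H_0}X$, being the preimage of a submanifold under the submersion $p$, and $p$ restricts to a smooth fibre bundle over $A$. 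I would then verify locally, using local sections of the principal bundles $G\to G/G_0$ and $H\to H/H_0$, that in the induced trivialisations $\Phi$ takes the form $(v,x)\mapsto(\bar\Phi(v),c(v)x)$ for a smooth $H_0$-valued map $c$, which is a diffeomorphism onto its image; since these images cover $p^{-1}(A)$, it follows that $\Phi$ is a diffeomorphism of $G\times^{G_0}X$ onto the closed submanifold $p^{-1}(A)$, i.e.\ a closed embedding.

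The delicate point is the passage from injective immersion to embedding in the analysis of $\bar\Phi$: an injective immersion need not be an embedding, and it is exactly the closedness of $A$ that forces the orbit to be an embedded (indeed closed) submanifold. Once $\bar\Phi$ is known to be a closed embedding, the fibre-bundle structure carries this over to $\Phi$ with only routine verifications.
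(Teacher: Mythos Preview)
Your proof is correct and follows essentially the same route as the paper: reduce to the base map $\bar\Phi\colon G/G_0\to H/H_0$, show it is a closed embedding, and then lift this through the commuting bundle diagram to conclude that $\Phi$ is an embedding. The paper's version is simply more compressed---it cites \cite[\S2.13 Theorem]{Montgomery} for the passage from injective immersion with closed image to embedding, and then invokes the commutative square of bundle maps without spelling out the local trivialisation argument you give in your last paragraph.
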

\begin{proof}
By construction of $G_0$ and $H_0$, the map $G/G_0 \rightarrow H/H_0$ is an immersion.
In particular, it is a smooth embedding \cite[§2.13 Theorem]{Montgomery}.
Since the following diagram of bundle maps 
\begin{center}
\begin{tikzpicture}
  \matrix (m) [matrix of math nodes,row sep=3em,column sep=4em,minimum width=2em]
  {
     G \times^{G_0} X & H \times^{H_0} X \\
     G /{G_0}  & H /{H_0} \\ };
  \path[-stealth]
    (m-1-1) edge node [left] {} (m-2-1)
            edge node [above] {$\Phi$} (m-1-2)
    (m-1-2) edge node [right] {} (m-2-2)
    (m-2-1) edge node [above] {} (m-2-2);
\end{tikzpicture}
\end{center}
commutes, it follows that $\Phi$ is an embedding.
\end{proof}

\begin{lemma} \label{LemmaSmoothEmbedding}
The natural map $\Phi \colon G \times^L S \rightarrow G^\C \times^{L^\C} S$ is an embedding.
\end{lemma}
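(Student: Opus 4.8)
The plan is to deduce the statement from Lemma~\ref{GeneralEmbedding}, applied with $H = G^\C$, $H_0 = L^\C$ (a closed subgroup of $G^\C$, as noted above), $G_0 := G \cap L^\C$, and with $S$ carrying the $L^\C$-action described before. Two hypotheses have to be verified: that $G_0 = L$ (so that the source of the lemma is indeed $G \times^L S$), and that the image of $G/L \to G^\C/L^\C$ is closed. Throughout I would use the anti-holomorphic involution $\sigma$ of $G^\C$ fixing $G$ pointwise --- it exists by the universal property of $G^\C$ as the extension of the identity $G \to \overline{G^\C}$ --- together with the Cartan decomposition $L^\C = L \exp(i\mathfrak{l})$, in which $(\ell,\eta) \mapsto \ell \exp(i\eta)$ is a diffeomorphism $L \times i\mathfrak{l} \to L^\C$ and $\exp(i\mathfrak{l})$ is closed.

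For $G_0 = L$: the group $G_0$ is closed in $G$ with Lie algebra $\g \cap (\mathfrak{l} \oplus i\mathfrak{l}) = \mathfrak{l}$, the identity following from $\mathfrak{l} \subset \g$ and $\g \cap i\g = \{0\}$; hence $G_0^0 = L^0$. Since every component of $L^\C$ meets $L \subset G$, every component of $G_0$ meets $L$. Finally, if $g \in G_0$, write $g = \ell \exp(i\eta)$ with $\ell \in L$, $\eta \in \mathfrak{l}$; then $\exp(i\eta) = \ell^{-1} g \in G$, so $\exp(i\eta) = \sigma(\exp(i\eta)) = \exp(-i\eta)$, giving $\exp(2 i\eta) = e$ and therefore $\eta = 0$ by injectivity of $\exp$ on $i\mathfrak{l}$. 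So $g \in L$.

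For the closedness of the image of $G/L$ in $G^\C/L^\C$, equivalently of $G L^\C = G \exp(i\mathfrak{l})$ in $G^\C$, I would show that $\Psi \colon G \times i\mathfrak{l} \to G^\C$, $(g,\eta) \mapsto g \exp(i\eta)$, is a proper injective immersion, hence a closed embedding. It is an immersion since its differential at $(e,0)$ is the inclusion $\g \oplus i\mathfrak{l} \hookrightarrow \g^\C$ (injective by total reality) and $\Psi$ is left $G$-equivariant; it is injective because $g_1 \exp(i\eta_1) = g_2 \exp(i\eta_2)$ forces $g_2^{-1} g_1 \in G \cap L^\C = L$, and uniqueness of the Cartan decomposition of $L^\C$ then gives $\eta_1 = \eta_2$, $g_1 = g_2$. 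Properness: if $g_n \exp(i\eta_n) \to w$, applying $\sigma$ gives $g_n \exp(-i\eta_n) \to \sigma(w)$, so $\exp(2 i\eta_n) = (g_n\exp(-i\eta_n))^{-1}(g_n \exp(i\eta_n)) \to \sigma(w)^{-1} w \in L^\C$; since $\exp(i\mathfrak{l})$ is closed in $L^\C$ and $\exp$ is a diffeomorphism onto it, $\eta_n$ converges, and then $g_n = (g_n \exp(i\eta_n)) \exp(-i\eta_n)$ converges in the closed subgroup $G$.

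Granting these two points, Lemma~\ref{GeneralEmbedding} shows that $\Phi \colon G \times^L S \to G^\C \times^{L^\C} S$ is an embedding. The step I expect to be the main obstacle is the properness of $\Psi$ (equivalently the closedness of $G L^\C$): the Lie-algebra identity and the counting of components are routine, but preventing a sequence in $G$ from escaping to infinity while its image in $G^\C/L^\C$ stays bounded is precisely where one must use that $G$ is closed and totally real in its universal complexification.
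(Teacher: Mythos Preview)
Your argument is correct and follows essentially the same route as the paper: reduce to Lemma~\ref{GeneralEmbedding}, verify $G\cap L^\C=L$ (the paper simply cites \cite[§3 Corollary~1]{HCom}, while you supply the direct argument via $L^\C=L\exp(i\mathfrak l)$ and the involution), and then show $GL^\C$ is closed in $G^\C$ by using the anti-holomorphic involution together with the Cartan decomposition of $L^\C$ --- your properness computation $\exp(2i\eta_n)=\sigma(g_n\exp(i\eta_n))^{-1}(g_n\exp(i\eta_n))$ is exactly the paper's computation $l_n^{-1}\Theta(l_n)$, just packaged as properness of $\Psi$ rather than as a bare sequential argument. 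One small remark: your justification that $\Psi$ is an immersion (differential at $(e,0)$ plus left $G$-equivariance) only establishes this at points $(g,0)$, not at general $(g,\eta)$; but this is harmless, since properness alone already gives closedness of the image $GL^\C$, which is all Lemma~\ref{GeneralEmbedding} requires.
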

\begin{proof}
The idea of the proof is to apply Lemma \ref{GeneralEmbedding} for $H = G^\C$, $H_0 = L^\C$ and $H = L$.
Due to \cite[§ 3 Corollary 1]{HCom}, we have $L^\C \cap G = L$.
 It remains to show that the image of $G/L \rightarrow G^\C / L^\C$ is closed.
 
From \cite[§1 Proposition]{HCom} we get the existence of an involutive anti-holomorphic homomorphism $\Theta$ on $G^\C$ such that $G$ is the fixed point set of $\Theta$.

Let $\theta$ be the corresponding involution on $\g^\C$, then $\mathfrak{l}$ is fixed under $\theta$ and $\mathfrak{l}^\C$ is invariant.
Since $L$ is compact, we have $L^\C = L \textrm{exp} (i \l)$.

Now let $g_n$ be a sequence in $G$ such that the image of $g_n$ in $G^\C / L^\C$ converges.
We find a sequence $l_n \in L^\C$ such that $g_n l_n \rightarrow g \in G^\C$.

Write $l_n = k_n \textrm{exp} (P_n)$ with $k_n \in L$ and $P_n \in i \mathfrak{l}$. 
Then 
\begin{equation*}
\begin{split}
&l_n g_n^{-1} \Theta(l_n g_n^{-1})^{-1} = l_n \Theta(l_n)^{-1} 
= k_n \textrm{exp}(P_n)\Theta(k_n \textrm{exp}(P_n))^{-1} \\
= &k_n \textrm{exp}(P_n) \textrm{exp}(- \theta(P_n))k_n^{-1} = k_n \textrm{exp} (2P_n) k_n^{-1},
\end{split}
\end{equation*}
using the same computation as above.
Since $L$ is compact, we may assume that $k_n$ and therefore $\textrm{exp} (P_n)$ converges.
This implies that $l_n$ and therefore $g_n$ is a convergent sequence.
\end{proof}

In order to formulate our first result, we fix $x \in X$ and a Slice $S$ at $x$. 
We identify $\Omega = G \cdot S$ with $G \times^L S$ where $L = G_x$ and denote by $\Omega^\C = G^\C \times^{L^\C} S$ the corresponding slice extension.
We have the following

\begin{theorem}\leavevmode\label{LocalExtension} 
\begin{enumerate}
\item The slice extension $\Omega^\C$ is a complex manifold such that the natural $G^\C$-action on $\Omega^\C$ is holomorphic.
\item The $G$-equivariant map $\Phi \colon G \times^L S \rightarrow G^\C \times^{L^\C} S$ is a CR embedding.
\item For every $G$-invariant open subset $U \subset \Omega$, the restriction $\Phi|_U \colon U \rightarrow G^\C U$ is a CR embedding and $G^\C U$ is a universal complexification of $U$.
\end{enumerate}
\end{theorem}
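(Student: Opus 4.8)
The plan is to build the complex structure on $\Omega^\C = G^\C \times^{L^\C} S$ by transporting the CR structure of $S$ (which comes from $X$) around by the $G^\C$-action, and then verify the universal property. For part (1), I would first note that $S$ carries a CR structure as a CR submanifold of $X$ (since the slice $S$ is transversal to the $G$-orbit directions, $\C T_s S$ contains $T^{1,0}_s X \oplus T^{0,1}_s X$; the transversality hypothesis says $\C \g(x) \oplus T^{1,0}_x X \oplus T^{0,1}_x X = \C T_x X$, so after shrinking $S$ we may arrange $\C T_s S = T^{1,0}_s X \oplus T^{0,1}_s X$ and $S$ is a genuine complex manifold, possibly an orbifold because $L/L^0$ acts as a finite group). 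So $S$ is (the underlying space of) a complex manifold, or at worst a complex space $S/(L/L^0)$; since $L^\C$ acts on $S$ through the finite group $L/L^0$ by biholomorphisms, the quotient structure is fine. Then on $G^\C \times S$ one has the product complex structure ($G^\C$ is a complex Lie group), $L^\C$ acts holomorphically and freely, so $G^\C \times^{L^\C} S$ is a complex manifold (or complex space) and the residual $G^\C$-action is holomorphic because left multiplication on $G^\C$ is holomorphic.

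For part (2), the map $\Phi \colon G \times^L S \to G^\C \times^{L^\C} S$ is already known to be a smooth closed embedding by Lemma \ref{LemmaSmoothEmbedding}. It remains to check the CR condition $d\Phi(T^{1,0}(G\times^L S)) = d\Phi(\C T(G\times^L S)) \cap T^{1,0}(G^\C\times^{L^\C}S)$. I would do this at a point $[e,s]$ and use $G$-equivariance to spread it everywhere. At $[e,s]$, the tangent space of $\Omega$ splits (up to the isotropy) as $\g(s) \oplus T_s S$, and the CR structure $T^{1,0}_{[e,s]}\Omega$ is, by transversality, carried isomorphically onto $T^{1,0}_s S$ under the slice projection — more precisely, one applies Lemma \ref{RelatedSections} with $E = T^{1,0}\Omega$ and $F_y = T^{1,0}_s S$ to see that the CR structure on $\Omega$ is exactly the pullback of that on $S$ along $\Omega \to S$ (locally). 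On the $\Omega^\C$ side, $T^{1,0}_{[e,s]}\Omega^\C = i\g(s) \oplus \g(s) \oplus T^{1,0}_s S$ modulo the identification, with the $\C\g$-part being the holomorphic directions of the group. Intersecting with $d\Phi(\C T\Omega) = \C\g(s) \oplus \C T_s S$ kills the part of $i\g$ not matched by $\g$; a dimension count then gives equality. The key point to get right is that $d\Phi$ sends the ``real group direction'' $\g(s)$ into $T^{1,0}\Omega^\C \oplus T^{0,1}\Omega^\C$ transversally, exactly matching the transversality of the $G$-action on $X$; this is where the hypothesis that the $G$-action on $X$ is transversal is essential.

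For part (3), let $U \subset \Omega$ be $G$-invariant and open. Then $U = G\cdot(U\cap S')$ for an $L$-invariant open $S' = U \cap S$, and $G^\C U = G^\C \times^{L^\C} S'$ is open in $\Omega^\C$, so $\Phi|_U$ is a CR embedding by (2). For the universal property: given a $G$-equivariant CR map $f \colon U \to Y$ into a complex manifold with holomorphic $G^\C$-action, restrict to $f|_{S'} \colon S' \to Y$. Since $S'$ is (locally) a complex manifold and $f|_{S'}$ is a CR map into a complex manifold, it is holomorphic; extend by $G^\C$-equivariance via $F([g,s]) := g\cdot f(s)$, which is well-defined because $(G^\C)_{[g,s]} = (G_{[g,s]})^\C$ (noted in the text preceding the theorem) and $f$ is $G$-equivariant, so the finite isotropy $L_s$ acts compatibly; $F$ is holomorphic because it is holomorphic in the $S'$-variable and the $G^\C$-action is holomorphic, and $F$ is the unique such extension because $G^\C\cdot S' = G^\C U$ and holomorphic maps agreeing on the totally real $G\cdot S'$ which generates are determined. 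The main obstacle I expect is the careful bookkeeping when $L^0 \neq \{e\}$: one must check that $S$ really does inherit an honest complex (orbifold) structure and that ``CR map into a complex manifold is holomorphic'' survives passage to the finite quotient $S/(L/L^0)$ — i.e. that invariant CR functions on $S$ are holomorphic functions on the complex space $S/(L/L^0)$. Handling the orbifold/complex-space subtleties cleanly, rather than the linear algebra of (2), is where the real work lies.
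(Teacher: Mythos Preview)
Your proposal has a genuine gap at its very first step, and the paper explicitly warns against it in the paragraph preceding Lemma~\ref{RelatedSections}. You claim that ``after shrinking $S$ we may arrange $\C T_s S = T^{1,0}_s X \oplus T^{0,1}_s X$ and $S$ is a genuine complex manifold''. This would make $S$ an integral submanifold of the real distribution $(T^{1,0}X \oplus T^{0,1}X)\cap TX$, which by Frobenius requires that distribution to be involutive. But for $V,W \in \Gamma(T^{1,0}X)$ the bracket $[V,\overline W]$ has a transversal component (this is exactly the Levi form), so the distribution is \emph{not} involutive unless $X$ is Levi-flat. You can match $\C T_x S$ with $T^{1,0}_xX \oplus T^{0,1}_xX$ at the single centre point $x$, but not on any open piece of $S$. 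Consequently $S$ carries no natural complex structure, and the product complex structure on $G^\C \times S$ that your whole construction of $\Omega^\C$ rests on does not exist. The same error propagates into your argument for part~(3), where you assert that $f|_{S'}$ is holomorphic because $S'$ is complex.

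The paper circumvents this by never trying to put a complex structure on $S$. Instead it considers the surjective submersion
\[
\eta \colon G^\C \times \Omega \longrightarrow \Omega^\C,\qquad (h,x)\mapsto h\Phi(x),
\]
where $G^\C \times \Omega$ is a CR manifold of the correct total (real) dimension: the CR structure is $T^{1,0}G^\C \oplus T^{1,0}\Omega$, and the point is that $G^\C$ supplies the missing holomorphic directions that $S$ alone cannot. One then computes $\ker d\eta$ explicitly, checks that $d\eta$ restricted to $T^{1,0}(G^\C\times\Omega)$ has constant-rank kernel and well-defined image along each fibre (this last step uses that $(G_{[1,s]})^\C$ acts holomorphically on the target tangent space), and invokes Lemma~\ref{RelatedSections} with $\pi=\eta$ to push the CR bundle down to a genuine complex structure on $\Omega^\C$. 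Parts~(2) and~(3) are then handled via the factorisation $\Phi = \eta \circ \Phi_0$ with $\Phi_0(x)=(1,x)$, and the universal extension $F$ is defined by $F(hx)=hf(x)$; its holomorphicity is read off from the commuting square with $\eta$ and the action map, not from any holomorphicity of $f|_S$.
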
																					
\begin{proof}
We begin by showing that $\Omega^\C$ is a complex manifold.
The map
\begin{gather*}
\eta \colon G^\C \times \Omega \rightarrow \Omega^\C \\
(h,x) \mapsto h \Phi(x).
\end{gather*}
is a surjective submersion.
We denote by $[h,s]_C$ the elements of $G^\C \times^{L^\C} S$. 						

The fibre $\eta^{-1} ([1,s_0]_C)$ consists of the elements $(h, [g,s]) \in G^\C \times (G \times^L S)$ such that $[hg, s]_C = [1,s_0]_C$.
In particular, this means that there exists an $l \in L^\C$ such that $l s = s_0$. But since the $L^\C$-orbits on $S$ are equal to the $L$-orbits, we may choose $l$ to be in $L$.
We get
\begin{gather*}
\eta^{-1}([1,s_0]_C ) = \{ (h, [g,s_0]) \, | \, [hg, s_0]_C = [1,s_0]_C \}.
\end{gather*}
This condition is equivalent to the existence of an $l \in (G^\C )_{[1,s_0]}= (G_{[1,s_0]})^\C$ such that $hg l^{-1} = 1$, hence $h = l g^{-1}$.
We have shown that
\begin{gather*}
\eta^{-1}([1,s_0]_C) = \{ (lg^{-1}, [g,s_0]) \, | \, l \in (G_{[1,s_0]})^\C, \, g \in G \}.
\end{gather*}
Now let $h_0 \in G^\C$. Since $\eta$ is $G^\C$-equivariant, we get
\begin{gather*}
\eta^{-1}([h_0, s_0] ) = \{ (h_0 l g^{-1}, [g, s_0]) \, | \, l \in (G_{[1,s_0]})^\C, \, g \in G \}.
\end{gather*}
Since $\eta$ is a submersion, every fibre of $\eta$ is a submanifold with tangent space equal to the kernel of $d \eta$.
hence in a point $(h_0, y) \in G^\C \times \Omega$, we get 					
\begin{gather*}
\textrm{ker} \, d_{(h_0,y)} \eta = \{  ( d h_0 (\xi- \mu), \mu_\Omega (y)) \, | \, \xi \in (\g_y)^\C, \mu \in \g \},
\end{gather*}
where $(\g_y)^\C$ denotes the Lie algebra of the isotropy group $(G^\C)_y$ and $dh_0$ is the differential of the left translation by $h_0$.

Now we consider $G^\C \times \Omega$ as a CR manifold.
Let $W \in \textrm{ker} \, d_{(h_0,y)} \eta \cap T^{1,0} (G^\C \times \Omega) $. Since the $G$-action on $\Omega$ is transversal, we have $W = (d h_0  (\xi),0)$ for $\xi \in \C(\g_y)^\C$.
Define 
\begin{gather*}
K_{(h_0, y)} := \textrm{ker} \, d_{(h_0,y)} \eta \cap T^{1,0}_{(h_0,y)} (G^\C \times \Omega) = \{ (d h_0 (\xi), 0) \, | \, \xi \in T^{1,0}_1 (G_y)^\C \}
\end{gather*}
then
\begin{gather*}
\overline{K}_{(h_0,y)}  = \{ (d h_0 (\xi), 0) \, | \, \xi \in T^{0,1}_1 (G_y)^\C \}
\end{gather*}
and
\begin{equation*}
\textrm{ker} \, d_{(h_0,y)} \eta \cap (T^{1,0} (G^\C \times \Omega)  \oplus T^{0,1} (G^\C \times \Omega))  = K_{(h_0,y)} \oplus \overline{K}_{(h_0,y)}.
\end{equation*}
Since all isotropy groups of the $G$-action on $\Omega$ are of the same dimension, we see that the dimension of $K_y$ does not depend on $y \in G^\C \times \Omega$, hence it defines a complex subbundle of $T^{1,0} (G^\C \times \Omega)$. 
Let $E$ be a complex subbundle of $T^{1,0} (G^\C \times \Omega)$ such that $E \oplus K = T^{1,0} (G^\C \times \Omega)$.\\

For $x \in G^\C \times \Omega$, define $F_x := \{ d_x \eta (V) \, | \, V \in T^{1,0}_x (G^\C \times \Omega) \}$ and for $y \in \Omega^\C$, set $F_y := \bigcup_{\eta (x) = y} F_x$.
Note that $d \eta$ defines an isomorphism between $E_x$ and $F_x$ in every point.
We want to show that $F_y = F_x$ for every $x \in \eta^{-1}(y)$.

The map $\eta$ is invariant under the CR action of $G$ on $G^\C \times \Omega$ via $(g, (h,x)) \mapsto (h g^{-1}, gx)$, which shows that $F_{[h,gx]} = F_{[hg, x]}$ for $g \in G$.

Because of that, it suffices to show $F_{(h, [1,s])} = F_{(h_0, [1,s_0])}$ if $\eta(h,[1,s]) = \eta (h_0, [1,s_0])$.
But the latter implies that $[h,s]_C = [h_0,s_0]_C$, hence there exists an $l \in L^\C$ such that $ls = s_0$ and we may again choose $l$ to be in $L$.

We may therefore assume $s = s_0$ and $h h_0^{-1} \in (G_{[1,s]})^\C$. It then remains to prove $F_{(l,[1,s])} = F_{(1,[1,s])}$ for all $l \in (G_{[1,s]})^\C$.

We have $F_{(l,[1,s])} =F_{[1,l[1,s]]} =  F_{(1,[1,s])}$ for all $l \in G_{[1,s]}$.
Note that $[1,s] \in \Omega$ implies $(G_{[1,s]})^\C = (G^\C)_{[1,s]}$.

Because $\eta$ is equivariant, we conclude $F_{(l,[1,s])} =d l (F_{(1,[1,s])})$ and need to show that $d l (F_{(1,[1,s])}) = F_{(1,[1,s])}$ for $l \in G_{[1,s]}^\C$. 
But now $\C T_{\eta(1,[1,s])} \Omega^\C$ is a holomorphic $G_{[1,s]}^\C$-representation and $F_{(1,[1,s])}$ is a complex subspace which is invariant under $G_{[1,s]}$.
Therefore it is invariant under $G_{[1,s]}^\C$. \\ 

We have shown that $d_x \eta$ induces an isomorphism between $E_x$ and $F_{\eta(x)} = F_x$ and from Lemma \ref{RelatedSections} we get that $F = \bigcup_x F_{\eta(x)}$ is a subbundle of $\C T \Omega^\C$. 
We claim that $F$ defines a CR structure on $\Omega^\C$. \\

Let $d_x \eta (V), d_x \eta (W) \in F_{\eta(x)}$ and assume $d_x \eta (V) = \overline{ d_x \eta (W)} = d_x \eta ( \overline{W})$.
We conclude $V- \overline{W} \in K_x \oplus \overline{K_x} $ and get $V - \overline{W} = V_0 - \overline{W}_0$ with $V_0 \in K_x$ and $\overline{W}_0 \in \overline{K}_x$.
But then $V_0 = V$, $W_0 = W$ and $d_x \eta (V) = d_x \eta ( \overline{W}) = 0$. 

For $V \in \Gamma^\infty ( \Omega^\C, F)$, we use Lemma \ref{RelatedSections} and find a smooth section $V_0 \in \Gamma^\infty (G^\C \times \Omega,E)$ with $d_x \eta V_0 = V(\pi(x))$.
Then
\begin{gather*}
V(\eta(x)) (f) = d f ( V (\eta(x))) = d f ( d \eta (V_0 (x))) = V_0 (x) (f \circ \eta),
\end{gather*}
or alternatively $(Vf) \circ \eta = V_0 (f \circ \eta)$.
For another $W \in \Gamma^\infty (\Omega^\C, F)$ and $W_0$ as above, this then implies 
\begin{equation*}
\begin{split}
[V,W] (\eta(x)) (f) &= V(W(f)) ( \eta (x)) - W(V(f)) ( \eta (x))\\
& = V_0( W_0( f \circ \eta)) (x) - W_0(V_0( f \circ \eta )) (x)\\
& = [V_0, W_0] (x) ( f \circ \eta)
= d \eta ( [V_0, W_0] (x)) (f).
\end{split}
\end{equation*}
Hence $[V,W] ( \eta(x)) = d \eta ([V_0, W_0](x))$ and $[V,W](\eta(x)) \in F_{\eta(x)}$.

We have shown that $\Omega^\C$ is a CR manifold.
Since its real codimension is zero, it is a complex manifold.

The $G^\C$-action on $\Omega^\C$ is holomorphic if the pulled back map $G^\C \times G^\C \times \Omega \rightarrow \Omega^\C$, $(g, (h,x)) \mapsto gh \Phi (x)$ is CR.
This is true because $\eta$ is a CR map by construction.\\

Lemma \ref{LemmaSmoothEmbedding} says that $\Phi$ is a smooth embedding, let us check that it is a CR embedding.
Note that $\Phi$ is a CR map by construction and $\Phi_0 \colon \Omega \rightarrow G^\C \times \Omega$, $x \mapsto (1,x)$ is a CR embedding with $\Phi = \eta \circ \Phi_0$.

Let $y \in \Omega$ and $W \in d_y \Phi ( \C T_y \Omega) \cap T^{1,0}_{\eta(1,y)} \Omega^\C$. 
By construction of the CR structure, $W = d_{(1,y)} \eta (W_0)$ for $W_0 \in T^{1,0}_{(1,y)} ( G^\C \times \Omega)$ and $W = d_{y} \Phi (V_0)$ for $V_0 \in \C T_y \Omega$.

Now $W_0 - d_y \Phi_0 (V_0) \in \textrm{ker} \, d_{(1,y)} \eta = \{  ( \xi- \mu, \mu_\Omega (y)) \, | \, \xi \in \C(\g^\C)_y, \mu \in \C\g \}$.
We find $\xi \in \C (\g^\C)_y$, $\mu \in \C \g$ such that $W_0 = ( \xi- \mu, \mu_\Omega (y) + d_y \Phi_0 (V_0))$.
But $W_0 \in T^{1,0}_{(1,y)} (G^\C \times \Omega)$ and since $\g$ is totally real in $\g^\C$, we get $\C \g \cap T^{1,0}_1 G^\C = \{0 \}$ and $\mu = 0$.
Because $\Phi_0$ is a CR embedding, this implies $V_0 \in T^{1,0}_y \Omega$ and shows that $\Phi$ is a CR embdding.\\

Now let us consider the universality condition. We will assume $U = \Omega$, the general case is analogous.
Let $Y$ be a complex manifold with holomorphic $G^\C$-action and $f \colon \Omega \rightarrow Y$ a $G$-equivariant CR map. 

We define $F \colon \Omega^\C \rightarrow Y$, $hx \mapsto hf(x)$ for $h \in G^\C$, $x \in \Omega$.
For this to be well-defined, we need to check that $h \in (G^\C)_x$ implies $h \in (G^\C)_{f(x)}$.

Since $x \in \Omega$, we have $(G^\C)_x  = (G_x)^\C$ and because the action of $G^\C$ on $Y$ is holomorphic, $G_x \subset (G^\C)_{f(x)}$ implies $(G^\C)_x = (G_x)^\C \subset (G^\C)_{f(x)}$.

We need to show that $F$ is holomorphic. Consider the commuting diagram
\begin{center}
\begin{tikzpicture}
  \matrix (m) [matrix of math nodes,row sep=3em,column sep=4em,minimum width=2em]
  {
     G^\C \times \Omega & G^\C \times Y \\
     \Omega^\C & Y \\};
  \path[-stealth]
    (m-1-1) edge node [right] {$\eta$} (m-2-1)
            edge node [above] {id $\times f$} (m-1-2)
    (m-2-1) edge node [below] {$F$} (m-2-2)
    (m-1-2) edge node [right] {$\varphi$} (m-2-2);
\end{tikzpicture}
\end{center}
Where $\varphi$ is the action map. Now $F$ is holomorphic if and only if $F \circ \eta$ is CR, which follows from the diagram.
\end{proof}

We want to give a global version of this local statement. 
For this, we consider the union over all slice extensions and identify the overlapping parts. 
We formalise this as follows.

Around every $x \in X$, there exists a slice extension $\Omega_x^\C$.
We may cover $X$ with countably many sets $\Omega_i$ with extensions $\Omega^\C_i$ for $i \in \mathbb{N}$.
 
For $i,j \in \mathbb{N}$ with $\Omega_i \cap \Omega_j \neq \emptyset$, define the open subset 
\begin{gather*}
\Omega^\C_{ij} := G^\C \cdot (\Omega_i \cap \Omega_j) \subset \Omega^\C_i.
\end{gather*}
Then the identity map $\Omega_i \cap \Omega_j \rightarrow \Omega_i \cap \Omega_j$ extends to a unique $G^\C$-equivariant holomorphic map $\varphi_{ji} \colon \Omega^\C_{ij} \mapsto \Omega^\C_{ji}$, using Theorem \ref{LocalExtension}.

Because of the uniqueness, we get $\varphi_{ii} = \textrm{id}_{\Omega^\C_i}$ and $\varphi_{kj} \circ \varphi_{ji} = \varphi_{ki}$ on the open subset $G^\C ( \Omega_i \cap \Omega_j \cap \Omega_k) \subset \Omega_i^\C$.
 This also implies $\varphi_{ij} = \varphi_{ji}^{-1}$.
 
Now define
\begin{gather*}
Z := \bigcup_{i \in \mathbb{N}} \Omega^\C_i \Big/ \sim,
\end{gather*} 
where $x \in \Omega^\C_i$ and $y \in \Omega^\C_j$ are equivalent if $\Omega_i \cap \Omega_j \neq \emptyset$ and $\varphi_{ji} (x) = y$.
Because of the remarks above, this does indeed define an equivalence relation.

Define $Z_0 := \bigcup_{i \in \mathbb{N}} \Omega^\C_i $ to be the disjoint union over the $\Omega^\C_i$, the quotient map $\pi \colon Z_0 \rightarrow Z$ and equip $Z$ with the quotient topology.

\begin{lemma}\label{Hausdorff}  
The space $Z$ is Hausdorff and second countable.
\end{lemma}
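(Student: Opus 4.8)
The plan is to verify the two properties separately, with Hausdorffness being the substantive point. Second countability is almost immediate: $Z_0$ is a countable disjoint union of the slice extensions $\Omega^\C_i$, each of which is a manifold and hence second countable, so $Z_0$ is second countable; since $\pi \colon Z_0 \to Z$ is an open continuous surjection (openness because the equivalence relation is generated by the $G^\C$-equivariant biholomorphisms $\varphi_{ji}$ between open sets, so saturations of open sets are open), the image $Z$ inherits a countable base.

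For Hausdorffness the key point is that the graph $R \subset Z_0 \times Z_0$ of the equivalence relation is closed. Unwinding the definition, $R$ is the union over pairs $(i,j)$ with $\Omega_i \cap \Omega_j \neq \emptyset$ of the graphs of the maps $\varphi_{ji}\colon \Omega^\C_{ij} \to \Omega^\C_{ji}$. Since $\pi$ is open, $Z$ is Hausdorff if and only if $R$ is closed in $Z_0 \times Z_0$, so I would show: if $(x_n, y_n) \in R$ with $x_n \to x \in \Omega^\C_i$ and $y_n \to y \in \Omega^\C_j$, then $(x,y) \in R$. The first step is to reduce to a single index pair: passing to a subsequence we may assume all $(x_n,y_n)$ lie in the graph of a fixed $\varphi_{k\ell}$; but then using the cocycle relations $\varphi_{ki}$, $\varphi_{\ell j}$ one transports the situation so that $x_n, y_n$ and their limits all lie in $\Omega^\C_i$ resp.\ $\Omega^\C_j$, and $y_n = \varphi_{ji}(x_n)$. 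So it suffices to prove that $\Omega^\C_{ij} = G^\C \cdot (\Omega_i \cap \Omega_j)$ is \emph{relatively closed} in the set of points of $\Omega^\C_i$ whose $\varphi_{ji}$-image-sequence converges — equivalently, that the partially-defined map $\varphi_{ji}$ does not extend "past the boundary" while staying convergent.

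The heart of the matter is therefore a properness/closedness statement about the sets $\Omega^\C_{ij}$ inside $\Omega^\C_i$ and $\Omega^\C_j$. Here I would exploit the explicit product-over-group description: $\Omega^\C_i = G^\C \times^{L_i^\C} S_i$ and $\Omega_i \cap \Omega_j$, being $G$-invariant, has the form $G \times^{L_i} S_i'$ for a $G_{x}$-invariant (relatively open) $S_i' \subset S_i$, whose closure in $S_i$ is controlled because the original slices $\Omega_i$ form a locally finite-type cover of $X$ and $X$ itself is Hausdorff. The argument is then parallel to the convergence argument in Lemma \ref{LemmaSmoothEmbedding}: given $h_n \in G^\C$, $s_n \in S_i'$ with $h_n[s_n]$ convergent in $\Omega^\C_i$, one writes (locally, using compactness of $L_i$) $h_n = g_n \exp(P_n) k_n$ with $g_n \in G$, $P_n \in i\mathfrak l_i$, and shows the $G$-part $g_n s_n$ stays in a compact subset of $X$, so that its limit, lying in the closure of $\Omega_i \cap \Omega_j$, actually lies in $\Omega_i \cap \Omega_j$ — using that $\Omega_i \cap \Omega_j$ is open and that the limit point is in $X$, where Hausdorffness and properness of the $G$-action on $X$ are available. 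This forces the limit point of $h_n[s_n]$ to lie in $\Omega^\C_{ij}$, which is exactly the closedness of the graph.

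The main obstacle I anticipate is precisely this last step: controlling the $G^\C$-direction. In $X$ the $G$-action is proper, so sequences can be controlled, but $\Omega^\C_i$ carries only a $G^\C$-action, and $G^\C$-actions are typically \emph{not} proper. One must therefore carefully separate the "imaginary" directions $\exp(i\mathfrak l_i)$ and $\exp(i\mathfrak g)$ — which behave well because $L_i$ is compact and because of the polar-type decomposition used already in Lemmas \ref{LemmaSmoothEmbedding} and \ref{LocalExtension} — from the genuine $G$-direction, where properness on $X$ can be invoked after pushing down via the CR embedding $\Phi\colon \Omega_i \hookrightarrow \Omega^\C_i$ and the identification of $\Omega_i$ with an open subset of $X$. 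I expect the bookkeeping of slices, isotropy groups, and the cocycle maps $\varphi_{ji}$ to be the only genuinely delicate part; everything else is a standard "closed graph $\Rightarrow$ Hausdorff quotient" argument.
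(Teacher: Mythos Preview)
Your closed-graph strategy is a legitimate reformulation of the Hausdorff question, and second countability is handled correctly. However, there is a genuine gap in the tactical plan you outline for controlling the ``$G^\C$-direction''. The decomposition you propose, $h_n = g_n \exp(P_n) k_n$ with $g_n \in G$, $P_n \in i\mathfrak l_i$, $k_n \in L_i$, would only parametrise the subset $G \cdot L_i^\C \subset G^\C$, which is in general a proper subset (nothing in the hypotheses forces $G$ itself to be compact, so there is no global polar decomposition of $G^\C$ relative to $G$). The arguments of Lemmas~\ref{LemmaSmoothEmbedding} and~\ref{LocalExtension} that you invoke use $L^\C = L\exp(i\mathfrak l)$ only for the \emph{compact} isotropy $L$, not for $G$; they do not give you leverage on $h_n$.

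The way to repair the argument is to realise that you never need to control $h_n$ at all: pass to $G^\C$-quotients. The key fact, implicit in the slice construction, is that $\Omega^\C_i / G^\C = \Omega_i / G$ is an open subset of the Hausdorff space $X/G$ (properness of the $G$-action). If $x_n \in \Omega^\C_{ij}$ with $x_n \to x$ in $\Omega^\C_i$ and $\varphi_{ji}(x_n) \to y$ in $\Omega^\C_j$, then the images $\pi(x_n)$ converge both in $\Omega_i/G$ and in $\Omega_j/G$ (since $\varphi_{ji}$ is $G^\C$-equivariant and the identity on $\Omega_i \cap \Omega_j$); Hausdorffness of $X/G$ forces the limit into $(\Omega_i \cap \Omega_j)/G$, hence $x \in \pi_i^{-1}((\Omega_i\cap\Omega_j)/G) = \Omega^\C_{ij}$, and then continuity of $\varphi_{ji}$ gives $y = \varphi_{ji}(x)$.

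This is essentially the paper's argument in different clothing. The paper does not phrase things via the closed graph; instead it directly constructs separating saturated neighbourhoods: reduce (using that every $G^\C$-orbit in $\Omega^\C_i$ meets $\Omega_i$) to the case $x \in \Omega_1$, $y \in \Omega_2$, take disjoint $G$-invariant neighbourhoods $U_x, U_y$ in $X$, and show their $G^\C$-saturations $W_x = G^\C U_x$, $W_y = G^\C U_y$ remain disjoint after projecting to $Z$. The crucial step there --- that $W_x \cap \Omega_1 = U_x$ --- is exactly the statement that the $G^\C$-quotient of $\Omega^\C_1$ coincides with the $G$-quotient of $\Omega_1$. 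So both routes hinge on the same observation; the paper's packaging is slightly more direct and avoids any discussion of the equivalence-relation graph.
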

\begin{proof}
Fix $i \in \mathbb{N}$ and let $U \subset \Omega^\C_i$ be open. 
Then $\pi^{-1}(\pi(U)) \cap \Omega^\C_j = \varphi_{ji} ( U \cap \Omega_{ij})$, which shows that $\pi^{-1}(\pi(U))$ is open. 

We show that $Z$ is Hausdorff.
If $U, V \subset \Omega^\C_i$ are disjoint, then $\pi^{-1}(\pi(U))$ and $\pi^{-1}(\pi(V))$ are also disjoint.

It therefore only remains to treat the case where $x \in \Omega^\C_1 \backslash \Omega_{12}^\C$ and $y \in \Omega^\C_2 \backslash \Omega^\C_{21}$. 

First assume that $x \in \Omega_1 \subset \Omega^\C_1$ and $y \in \Omega_2 \subset \Omega^\C_2$.
Then $x$ and $y$ can not be in the same $G$-orbit in $X$, since $\Omega_1 \backslash \Omega_{12}^\C$ is a $G$-invariant neighbourhood of $x$ not containing $y$.
Using the existence of slices on $X$, we find open, $G$-invariant disjoint neighbourhoods $U_x$ and $U_y$ of $x$ and $y$ in $X$ with $U_x \subset \Omega_1$ and $U_y \subset \Omega_2$.

Define the open subsets $W_x := G^\C \cdot U_x$ and $W_y := G^\C \cdot U_y$ of $\Omega^\C_1$ and $\Omega^\C_2$. 
Assume that $\pi(W_x)$ and $\pi(W_y)$ are not disjoint. 
Then we find $x_0 \in W_x \cap \Omega^\C_{12}$ and $y_0 \in W_y \cap \Omega^\C_{21}$ such that $x_0$ and $y_0$ are equivalent. 
There exists $h \in G^\C$ such that $h x_0 \in \Omega_1 \cap W_x= U_x$ and since $\varphi_{21} (\Omega_1) \subset \Omega_2$, we conclude $h y_0 \in \Omega_2 \cap W_y = U_y$.
But $h x_0 $ and $h y_0$ are equivalent if and only if $h x_0 = h y_0$ in $X$, implying that $U_x$ and $U_y$ are not disjoint, which is a contradiction.

The general case now follows because the sets $W_{h_x x}$ and $W_{h_y y}$ we constructed are $G^\C$-invariant and every $G^\C$-orbit in $\Omega^\C_i$ intersects $\Omega_i$.

Now $Z$ is second countable because the $\Omega^\C_i$ are second countable.
\end{proof} 

The $\Omega^\C_i$ give $Z$ the structure of a complex manifold.
The $G^\C$-action on $Z$ is holomorphic because the $G^\C$-actions on the $\Omega^\C_i$ are holomorphic. 
The CR embeddings $\Phi_i \colon \Omega_i \rightarrow \Omega^\C_i$ extend to a CR embedding $\Phi \colon X \rightarrow Z$.

We summarise our results in the following Theorem.

\begin{theorem}\label{GlobalExtension}
Let $G$ be a subgroup of its universal complexification and $X$ a CR manifold with proper, transversal CR $G$-action.
Then the map $\Phi \colon X \rightarrow Z$ is a $G$-equivariant CR embedding of $X$ into the complex manifold $Z$.

Furthermore, for every $G$-invariant open subset $U \subset X$, the set $(G^\C \Phi(U), \Phi) $ is the universal equivariant extension of $U$.
In particular, the manifold $(Z, \Phi)$ is the universal equivariant extension of $X$.
\end{theorem}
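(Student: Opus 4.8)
The plan is to deduce the theorem from the local statement Theorem \ref{LocalExtension} together with the gluing construction carried out before Lemma \ref{Hausdorff}; the only genuinely new point is that the local data match up globally. The technical fact underlying everything is that, inside a slice extension, a $G^\C$-orbit meets $X$ in a single $G$-orbit: if $x,x'\in\Omega_i$ and $\Phi_i(x)=h\Phi_i(x')$ for some $h\in G^\C$, then, writing $x=[g_1,s_1]$ and $x'=[g_2,s_2]$ in $\Omega_i=G\times^{L}S_i$ and unravelling the resulting equality in $\Omega^\C_i=G^\C\times^{L^\C}S_i$, the $S_i$-components $s_1$ and $s_2$ lie in the same $L^\C$-orbit; since the $L^\C$-orbits on $S_i$ coincide with the $L$-orbits (recorded before Lemma \ref{GeneralEmbedding}), they lie in the same $L$-orbit, and one reads off from this that $x$ and $x'$ are $G$-conjugate. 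The same computation shows, for $G$-invariant open subsets $A,B\subset\Omega_i$, that $G^\C A\cap G^\C B=G^\C(A\cap B)$ inside $\Omega^\C_i$.

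First I would verify that $\Phi$ is a closed CR embedding. By Theorem \ref{LocalExtension}(2) and Lemma \ref{LemmaSmoothEmbedding} each $\Phi_i$ is a closed CR embedding of $\Omega_i$ into $\Omega^\C_i$, and since $\varphi_{ji}$ restricts to the identity on $\Omega_i\cap\Omega_j$ the $\Phi_i$ patch to the smooth CR immersion $\Phi\colon X\to Z$ described before the theorem; immersivity and the CR identity $d\Phi(T^{1,0}X)=d\Phi(\C T X)\cap T^{1,0}Z$ are local conditions, hence hold for $\Phi$. Using the orbit fact, $\Phi(X)\cap\Omega^\C_i=\Phi_i(\Omega_i)$: if $\Phi(x)\in\Omega^\C_i$ with $x\notin\Omega_i$, then after gluing $\Phi(x)=h\Phi_i(x')$ for some $x'\in\Omega_i$ and $h\in G^\C$, forcing $x$ into the $G$-orbit of $x'$ and hence into the $G$-invariant set $\Omega_i$ — a contradiction. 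As each $\Phi_i(\Omega_i)$ is closed in $\Omega^\C_i$ and $Z$ is Hausdorff and second countable by Lemma \ref{Hausdorff}, $\Phi$ is injective with closed image, i.e. a closed CR embedding. Since $\Omega^\C_i=G^\C\Phi_i(\Omega_i)$ for every $i$, we also get $Z=G^\C\Phi(X)$, so the last sentence of the theorem is the case $U=X$ of the preceding one.

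For universality, fix a $G$-invariant open $U\subset X$ and put $U_i:=U\cap\Omega_i$. By the intersection identity above, $G^\C\Phi(U)=\bigcup_i G^\C U_i$ is open in $Z$, and the overlap of the charts $G^\C U_i$ and $G^\C U_j$ is exactly $G^\C(U_i\cap U_j)$, which by Theorem \ref{LocalExtension}(3) is the universal equivariant extension of $U_i\cap U_j$. Given a complex manifold $Y$ with holomorphic $G^\C$-action and a $G$-equivariant CR map $f\colon U\to Y$, Theorem \ref{LocalExtension}(3) applied to $f|_{U_i}$ yields a unique $G^\C$-equivariant holomorphic $F_i\colon G^\C U_i\to Y$ with $f|_{U_i}=F_i\circ\Phi$. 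On $G^\C(U_i\cap U_j)$ both $F_i$ and $F_j$ are $G^\C$-equivariant holomorphic extensions of $f|_{U_i\cap U_j}$, hence coincide by the uniqueness clause of Theorem \ref{LocalExtension}(3); so the $F_i$ glue to a $G^\C$-equivariant holomorphic $F\colon G^\C\Phi(U)\to Y$ with $f=F\circ\Phi$, and any two such $F$ agree because they restrict to the unique $F_i$ on each chart $G^\C U_i$.

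The hard part is not an estimate but precisely this orbit-theoretic bookkeeping: one must ensure that $G^\C$-saturations inside the slice extensions interact with the $G$-orbit structure tightly enough that (i) $\Phi(X)$ meets each chart only in the expected closed set and (ii) chart overlaps coincide exactly with the domains on which Theorem \ref{LocalExtension}(3) supplies uniqueness. Everything reduces to the single assertion that a $G^\C$-orbit meets $X$ in one $G$-orbit, which in turn rests on the identity $L^\C/(L^0)^\C=L/L^0$ and the coincidence of $L^\C$- and $L$-orbits on a slice.
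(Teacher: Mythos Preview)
Your proof is correct and follows essentially the same gluing strategy as the paper: extend $f$ to $F_i$ on each slice extension via Theorem~\ref{LocalExtension}(3), then match the $F_i$ on overlaps using the uniqueness clause. The paper's proof is terser --- it simply asserts that the $\Phi_i$ glue to a CR embedding and only writes out universality for $U=X$ --- whereas you supply the orbit-theoretic bookkeeping (that a $G^\C$-orbit meets $X$ in a single $G$-orbit, and the resulting identity $G^\C A\cap G^\C B=G^\C(A\cap B)$) needed to see closedness of $\Phi$ and to identify the chart overlaps, points the paper leaves implicit.
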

\begin{proof}
It remains to proof the universality condition. 
We will again assume $U = X$ and take a $G$-equivariant CR map $f \colon X \rightarrow Y$.
For every $x \in X$, the map $f$ extends to a $G^\C$-equivariant holomorphic map $F_i \colon \Omega^\C_i \rightarrow Y$ and we define $F \colon Z \rightarrow Y$ via $F(y) = F_i(y)$ if $y \in \Omega^\C_i$.
We need to show that this is well-defined. 
Consider $\Omega^\C_i$ and $\Omega^\C_j$ with $\Omega_i \cap \Omega_j \neq \emptyset$. 
But then $F_i \colon \Omega^\C_{ij} \rightarrow Y$ and $F_j \circ \varphi_{ji} \colon \Omega^\C_{ij} \rightarrow Y$ are both extensions of the same function $f$ on $\Omega_i \cap \Omega_j$, hence $F_i = F_j \circ \varphi_{ji}$ and $F$ is well-defined.
\end{proof}

We now consider the quotient $X/G$, using that $X/G = Z/G^\C$, and show that it has the structure of a complex space. 
Since the action of $G^\C$ on $Z$ is generally not proper, we need to proof the existence of holomorphic slices, i.e. for every $z \in Z$, there exists a $(G^\C)_z$-invariant complex submanifold $S_C$ of $Z$ with $z \in S_C$ such that the map $G^\C \times^{(G^\C)_z} S_C \rightarrow Z$, $[h,w] \mapsto hw$ is biholomorphic onto an open subset of $Z$.

\begin{definition} 
Let $Z$ be a complex manifold with an action of a complex reductive group $K^\C$, where $K$ is a maximal compact subgroup of $K^\C$. An open subset $W \subset Z$ is called $K$-orbit convex if $W$ is $K$-invariant and for all $z \in W$, $\xi \in \mathfrak{k}$ the set $\{ t \in \R \, | \, \textrm{exp}(it\xi) z \in W \}$ is connected.
\end{definition}

If $Z$, $Y$ are complex manifolds with $K^\C$-action, $W \subset Z$ is $K$-orbit convex and $f \colon W \rightarrow Y$ is a $K$-equivariant holomorphic map, then we may define $F \colon K^\C W \rightarrow Y$ by setting $F(kz):= k f(z)$ for $k \in K^\C$ and $z \in W$.
The identity theorem for holomorphic functions then gives that $F$ is well-defined.

\begin{proposition}\label{sliceLemma}   
Let $G$ be a subgroup of its universal complexification $G^\C$ and $X$ a CR manifold with transversal, proper $G$-action.
Let $x \in X$, $L = G_x$ and $\Omega^\C = G^\C \times^{L^\C} S$ a slice extension around $x$. 
Then there exists a holomorphic slice $S_C$ for the $G^\C$-action on $\Omega^\C$.
\end{proposition}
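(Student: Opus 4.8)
The plan is to construct $S_C$ by linearising the action of the \emph{compact} group $L$ near its fixed point $x$, bootstrapping that linearisation to $L^\C$, and letting $S_C$ correspond to the subspace $T^{1,0}_x X$ of $T^{1,0}_x\Omega^\C$; one then verifies the slice property. First identify $x$ with $[1,x]_\C\in\Omega^\C$, so that $(G^\C)_x=L^\C$ is complex reductive with maximal compact subgroup $L$, and choose an $\operatorname{Ad}(L)$-invariant (hence $\operatorname{Ad}(L^\C)$-invariant) complement $\mathfrak m$ to $\mathfrak l^\C$ in $\g^\C$. Running the argument from the proof of Theorem~\ref{LocalExtension} at the point $x$ — where the kernel $K_{(1,x)}$ of $d\eta$ inside $T^{1,0}$ is $\mathfrak l^\C\oplus 0$ and one may take $E_{(1,x)}=\mathfrak m\oplus T^{1,0}_x\Omega$ — transversality of the $G$-action produces the decomposition
\begin{equation*}
T^{1,0}_x\Omega^\C \;=\; V_x\;\oplus\;T^{1,0}_x X ,\qquad V_x := d\eta(\mathfrak m\oplus 0)=T^{1,0}_x(G^\C x),
\end{equation*}
in which both summands are complex $L^\C$-submodules of $T^{1,0}_x\Omega^\C$ (an $L$-invariant subspace of a finite dimensional holomorphic $L^\C$-module is automatically $L^\C$-invariant, since the isotypic decompositions for $L$ and for $L^\C$ coincide).

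Next I would linearise. Since $L$ is compact and acts holomorphically on $\Omega^\C$ fixing $x$, Bochner's linearisation theorem — obtained by averaging an arbitrary holomorphic chart at $x$ over $L$ — gives an $L$-invariant open neighbourhood $U$ of $x$ and an $L$-equivariant biholomorphism $\chi\colon U\to U'\subset T_x\Omega^\C$ onto an $L$-invariant neighbourhood of $0$, intertwining the action on $\Omega^\C$ with the linear representation $\rho$ of $L$ on $T_x\Omega^\C$. The key step is to upgrade this to $L^\C$: for $z$ near $x$, the map $l\mapsto\chi(l\cdot z)-\rho(l)\chi(z)$ — with $\rho$ now the holomorphic extension of the representation to $L^\C$, which exists by reductivity — is holomorphic on a connected neighbourhood of $1$ in $L^\C$ and vanishes on $L$; since $L$ is a totally real submanifold of maximal dimension of $L^\C$ (exactly the standing hypothesis applied to $L$), the identity theorem forces $\chi(l\cdot z)=\rho(l)\chi(z)$ for all $l\in L^\C$ where defined. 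Let $N\subset T_x\Omega^\C$ be the linear subspace corresponding to the submodule $T^{1,0}_x X$, and set $S_C:=\chi^{-1}(N\cap U')$; because $\rho(l)N=N$ and $\rho(l)U'$ always contains a fixed neighbourhood of $0$, the $L^\C$-translates of $S_C$ agree near $x$, so (after passing to the $L^\C$-saturation) one obtains an $L^\C$-invariant complex submanifold of dimension $n$ through $x$ with $T^{1,0}_x S_C=T^{1,0}_x X$, transverse to the orbit $G^\C x$ at $x$.

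It then remains to check the slice property. The quotient $G^\C\times^{L^\C}S_C$ is a complex manifold, as $L^\C$ acts freely, properly and holomorphically on $G^\C\times S_C$, and $\Psi\colon G^\C\times^{L^\C}S_C\to\Omega^\C$, $[h,w]\mapsto hw$, is $G^\C$-equivariant and holomorphic. By the displayed decomposition together with a dimension count, its differential at $[1,x]$ maps $\g^\C/\mathfrak l^\C\oplus T_x S_C$ isomorphically onto $T_x\Omega^\C$, so $\Psi$ is a local biholomorphism near $[1,x]$ and, by equivariance, near the whole zero section $G^\C\times^{L^\C}\{x\}$. Shrinking $S_C$ to a smaller $L^\C$-invariant neighbourhood of $x$ makes $\Psi$ injective, hence a biholomorphism onto the open set $G^\C\cdot S_C$, which is the required holomorphic slice.

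I expect the genuine difficulties to be two. First, the passage from the $L$-equivariant to the $L^\C$-equivariant linearisation: this is the point at which the hypothesis that $G$ (and therefore $L$) is a subgroup of its universal complexification enters essentially. Second, the final shrinking step, which cannot rely on properness of the $G^\C$-action because that action is not proper; instead one argues locally and uses that $\Omega^\C=G^\C\times^{L^\C}S$ with $L^\C$ acting on the real slice $S$ through the \emph{finite} group $L/L^0$, so that two points of a sufficiently small $S_C$ lying in a common $G^\C$-orbit can be traced back to a common $L^\C$-orbit, exactly as in the proof of the smooth slice theorem. One must also make sure $S_C$ can indeed be taken $L^\C$-invariant even though $L^\C$ is noncompact; this causes no trouble because $S_C$ need not be relatively compact and its germ at $x$ is already $L^\C$-invariant.
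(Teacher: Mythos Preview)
Your overall architecture matches the paper's: linearise at the $L^\C$-fixed point $x$, take $S_C$ as the preimage of a complementary $L^\C$-submodule of $T_x\Omega^\C$, check that $\Psi\colon G^\C\times^{L^\C}S_C\to\Omega^\C$ is a local biholomorphism along the zero section, and then argue injectivity using the fibre structure $\Omega^\C=G^\C\times^{L^\C}S$ with $L^\C$ acting on $S$ through the finite group $L/L^0$. For the last point the paper proceeds just as you suggest, restricting a local inverse of $\Psi$ to $S\cap\tilde W$ (where it is automatically $L^\C$-equivariant, since the action there factors through a finite group) and extending $G^\C$-equivariantly to $G^\C\cdot(S\cap\tilde W)$.

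The substantive gap is the step you yourself flag: upgrading the $L$-equivariant Bochner chart to an $L^\C$-equivariant one. Your identity-theorem argument gives $\chi(lz)=\rho(l)\chi(z)$ only for $l$ in those components of $\{l\in L^\C:lz\in U\}$ meeting $L$, and the subsequent claims do not hold: ``$\rho(l)U'$ always contains a fixed neighbourhood of $0$'' is false for noncompact $L^\C$ (the eigenvalues of $\rho(\exp i\xi)$ are unbounded in both directions), and ``passing to the $L^\C$-saturation'' of $S_C$ need not yield a submanifold, since the translates $l\cdot S_C$ share only a germ at $x$ and may diverge elsewhere. What is actually needed is an $L^\C$-equivariant chart on an $L^\C$-invariant (hence unbounded) neighbourhood of $x$. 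The paper obtains this via $L$-orbit convexity: using that $G^\C/L^\C$ is Stein (Matsushima), a neighbourhood of $eL^\C$ embeds $L^\C$-equivariantly into an $L^\C$-representation (Snow), where $0$ has an $L$-orbit-convex neighbourhood basis (Heinzner); pushing this through the bundle $\Omega^\C\to G^\C/L^\C$ gives $L$-orbit-convex neighbourhoods of $x$ in $\Omega^\C$. On an orbit-convex $W$ an $L$-equivariant holomorphic map extends, by the formula $F(lw):=l\,f(w)$, to a well-defined $L^\C$-equivariant map on $L^\C W$ --- orbit convexity is exactly what makes the identity-theorem step global --- and applying this to the Bochner chart and to its inverse produces the $L^\C$-equivariant linearisation from which $S_C$ is then cut out as the preimage of an invariant linear subspace.
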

\begin{proof}
The group $G^\C$ is a Stein manifold \cite[§1 Proposition]{HCom} and $L^\C$ is complex reductive, hence $G^\C / L^\C$ is a Stein manifold \cite{Matsushima}.
We have an $L^\C$-action on $G^\C/L^\C$ via the multiplication from the left.
Since $L^\C$ is complex reductive and $1 \cdot L^\C \in G^\C / L^\C$ is a fixed point, we find an $L^\C$-invariant neighbourhood $U$ of $1 \cdot L^\C$ and a $L^\C$-equivariant embedding of $U$ onto an open subset $U_V$ of an $L^\C$-representation $V$ \cite[Theorem 5.2 and Remark 5.4]{Snow}.
Since $1 \cdot L^\C$ is a fixed point, translation by $1 \cdot L^\C \in U_V$ is $L^\C$-equivariant and we may assume that $1 \cdot L^\C$ gets mapped to $0 \in V$. 

Now $L$ acts by unitary transformations on $V$ for a suitable inner product.
Applying \cite[§3.4 Proposition]{HeinznerGIT} to the inner product, we conclude that $0 \in V$ has a basis of $L$-orbit convex neighbourhoods.\\

The inverse image of an $L$-orbit convex neighbourhood of $1 \cdot L^\C$ via the $L^\C$-equivariant quotient map $G^\C / (L^0)^\C \rightarrow G^\C/L^\C$  is an $L$-orbit convex neighbourhood $\tilde{W}$ of $1 \cdot (L^0)^\C$ in $G^\C /(L^0)^\C$ which is invariant under the $L^\C$-action from the right.

Now if $\tilde{W}$ is as above and $W_0$ is an $L^\C$-invariant neighbourhood of $x \in S$, then the image of $\tilde{W} \times W_0$ is an $L$-orbit convex neighbourhood of $x$ in $G^\C \times^{L^\C} S$. 
We conclude that $x \in G^\C \times^{L^\C} S$ has an $L$-orbit convex neighbourhood basis.

Now take some $L$-orbit convex neighbourhood $W_1$ of $x$ and an open $L$-invariant neighbourhood $W_2$ of $0 \in T_x \Omega^\C$ with an $L$-equivariant biholomorphic map $\varphi \colon W_1 \rightarrow W_2$.
We may extend $\varphi$ to a $L^\C$-equivariant map $\Phi \colon L^\C W_1 \rightarrow L^\C W_2$.

Apply \cite[§3.4 Proposiiton]{HeinznerGIT} on the origin in $T_x \Omega^\C$ and find some $L$-orbit convex neighbourhood $\tilde{W}_2 \subset W_2$. Then extend $\varphi^{-1}$ to a $L^\C$-equivariant map $\tilde{\Phi} \colon L^\C \tilde{W}_2 \rightarrow L^\C W_1$. 
Set $U_1 = L^\C \tilde \Phi (\tilde W)$ and $U_2 = \tilde W$, then $\tilde \Phi$ is inverse to $\Phi \colon U_1 \rightarrow U_2$ because of equivariance.

By choosing an $L^\C$-invariant subspace of $T_x \Omega^\C$ which is perpendicular to $\mathfrak{l}^\C (x)$, we find an $L^\C$-invariant complex submanifold $S_C$ of $\Omega^\C$ through $x$ such that the induced map $\eta \colon G^\C \times^{L^\C} S_C \rightarrow \Omega^\C$ is an immersion in $[1,x]$.

We find some $L$-invariant neighbourhood $\tilde{W}$ of $x$ in $\Omega^\C$ and an $L$-equivariant holomorphic map $\tilde{\eta} \colon \tilde{W} \rightarrow G^\C \times^{L^\C} S_C$ with $\eta \circ \tilde{\eta} =$ Id.

Write $\Omega^\C = G^\C \times^{L^\C} S$, then $\tilde{\eta}|_{S \cap \tilde{W}}$ is an $L^\C$-equivariant map, which extends to a $G^\C$-equivariant map $\bar{\eta}$ on the open set $G^\C \cdot (\tilde{W} \cap S)$.
Because of equivariance, we get $\eta \circ \bar{\eta} = \textrm{Id}$ on $G^\C \cdot (\tilde{W} \cap S)$ and $\eta$ is biholomorphic after possibly shrinking $S_C$.
\end{proof}

\begin{theorem}\label{QuotientComplex}
Let $G$ be a closed subgroup of its universal complexification and $X$ a CR manifold with transversal, proper CR $G$-action.
Then $X/G$ is a complex space, such that the sheaf of holomorphic functions on $X/G$ is given by the sheaf of $G$-invariant CR functions on $X$.
\end{theorem}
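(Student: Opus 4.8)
The plan is to build the complex space structure on $X/G$ by gluing together the local models $S_C \git L^\C$ coming from Proposition~\ref{sliceLemma}, and then identify the structure sheaf. First I would invoke Theorem~\ref{GlobalExtension} to replace $X$ by its universal equivariant extension $Z$, a complex manifold with holomorphic $G^\C$-action, and note that $X/G = Z/G^\C$ as topological spaces; this reduces the problem to putting a complex-space structure on the (non-Hausdorff a priori, but see below) quotient $Z/G^\C$. Fixing $z \in Z$, Proposition~\ref{sliceLemma} gives a holomorphic slice $S_C$ for the $G^\C$-action with $G^\C \times^{L^\C} S_C \to Z$ biholomorphic onto an open $G^\C$-saturated neighbourhood, where $L = G_x$ and $L^\C$ is complex reductive. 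Hence locally $Z/G^\C \cong S_C / L^\C$, and since $L^\C$ is complex reductive acting on the complex manifold $S_C$ (which, after shrinking, embeds $L^\C$-equivariantly into an $L^\C$-representation), the analytic Hilbert quotient $S_C \git L^\C$ exists and is a complex space by standard invariant-theory results (Snow, Heinzner, Luna-type slice theorem). These $S_C \git L^\C$ are the charts.

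Next I would check the charts are compatible. Given two points with slice extensions $\Omega^\C_i = G^\C \times^{L^\C_i} S_i$ and slices $S_{C,i} \subset \Omega^\C_i$, on the overlap $G^\C(\Omega_i \cap \Omega_j)$ the uniqueness in Theorem~\ref{LocalExtension}(3) produces a $G^\C$-equivariant biholomorphism $\varphi_{ji}$ matching the two, which descends to a biholomorphism of the corresponding open subsets of $S_{C,i}\git L^\C_i$ and $S_{C,j}\git L^\C_j$ — here one uses that the analytic Hilbert quotient is a categorical quotient in the category of complex spaces, so any $G^\C$-equivariant biholomorphism descends uniquely. Gluing these gives $X/G$ the structure of a complex space, and the quotient map $\pi\colon X \to X/G$ factors as $X \hookrightarrow Z \to Z/G^\C = X/G$, hence is continuous; Hausdorffness of $X/G$ follows from properness of the $G$-action on $X$ (the same orbit-separation argument as in Lemma~\ref{Hausdorff}, or directly: proper action has closed graph, so orbit space is Hausdorff), and second countability is inherited.

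For the structure sheaf: on a chart $S_C \git L^\C$ the holomorphic functions are by definition (of the analytic Hilbert quotient) the $L^\C$-invariant holomorphic functions on $S_C$. I would show these correspond exactly to the $G$-invariant CR functions on the corresponding open set of $X$. One direction: a $G$-invariant CR function on $U \subset X$ extends, by the universality of $Z$ (Theorem~\ref{GlobalExtension}, applied to $\C$ as target with trivial $G^\C$-action), to a $G^\C$-invariant holomorphic function on $G^\C U \subset Z$, which restricts to an $L^\C$-invariant holomorphic function on the slice $S_C$, i.e. an element of $\O_{X/G}$. Conversely a holomorphic function on (an open set of) $X/G$ pulls back to a $G^\C$-invariant holomorphic function on the relevant open subset of $Z$, whose restriction to $X$ is a $G$-invariant CR function. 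That these two assignments are mutually inverse is immediate since $X$ is totally real of full dimension in $Z$ near $X$ — a holomorphic function on a neighbourhood of $X$ in $Z$ is determined by its restriction to $X$ — combined with the slice biholomorphism. This gives the claimed identification of sheaves.

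The main obstacle I expect is the gluing/descent step: one must be careful that the local complex-space structures $S_C \git L^\C$ are genuinely independent of the choices (the slice $S_C$, the reductive group, the auxiliary representation) up to canonical isomorphism, and that the transition maps satisfy the cocycle condition. This is where the categorical (universal) property of the analytic Hilbert quotient is essential — it converts the already-established $G^\C$-equivariant cocycle $\{\varphi_{ji}\}$ on the $\Omega^\C_i$ into a cocycle of biholomorphisms of the quotient charts with no further choices. A secondary technical point is verifying that $Z/G^\C$ is Hausdorff even though the $G^\C$-action is not proper; this does not follow from properness of $G^\C$ but does follow from properness of $G$ on $X$ together with the fact that $G^\C$-orbits in $Z$ meet $X$ in $G$-orbits (closedness of generic orbits / the orbit-convexity machinery already used in Lemma~\ref{Hausdorff} and Proposition~\ref{sliceLemma} handles this).
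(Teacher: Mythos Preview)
Your overall architecture matches the paper's: pass to the universal equivariant extension $Z$, use Proposition~\ref{sliceLemma} to get local models $S_C/L^\C$, and read off the structure sheaf from the universality of Theorem~\ref{LocalExtension}/\ref{GlobalExtension}. But you miss the one observation that makes the paper's argument short and closes a gap in yours: because the $G$-action is \emph{transversal}, all $G$-orbits have the same dimension, so $L^0$ acts trivially on the slice and $L^\C$ acts on $S_C$ through the \emph{finite} group $L^\C/(L^0)^\C \cong L/L^0$ (this was set up just before Lemma~\ref{GeneralEmbedding}). Hence $S_C/L^\C$ is simply a finite quotient of a complex manifold, which is a complex space (indeed an orbifold) by elementary means---no Luna slice theorem, no analytic Hilbert quotient machinery is needed.

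This same observation repairs the point where your argument is incomplete. You write ``locally $Z/G^\C \cong S_C/L^\C$'' (the orbit space) and then switch to ``the analytic Hilbert quotient $S_C \git L^\C$'' without explaining why the categorical quotient coincides with the orbit space. In general $S_C \git L^\C$ identifies distinct orbits whose closures meet; you need all $L^\C$-orbits in $S_C$ to be closed. Once you know $L^\C$ acts as a finite group this is automatic, and your $\git$ becomes an honest $/$. Your more elaborate discussion of gluing and Hausdorffness is fine but not really needed beyond what the paper already established: the charts are open subsets of $X/G$ (which is Hausdorff since $G$ acts properly), and chart compatibility is immediate from the universality statement in Theorem~\ref{LocalExtension}(3) together with the fact that a finite-group quotient is already a categorical quotient.
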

\begin{proof}
Let $x \in X$ and let $\Omega^\C$ be a slice extension.
Proposition \ref{sliceLemma} then implies that locally, $\Omega/G = \Omega^\C / G^\C = S_C / L^\C $, where $S_C$ may be realised as an open subset of an $L^\C$-representation $V$. Now $L^\C$ acts as a finite group on $S_C$ and $V$, hence $V/L^\C$ is an affine variety, giving $X/G$ the structure of a complex space.

Because of the universality condition, every $G$-invariant CR map on $\Omega$ extends to a unique $G^\C$-invariant holomorphic function on $\Omega^\C$.
\end{proof}

\begin{remark}
We have actually shown that $X/G$ is a complex orbifold.
\end{remark}

\begin{remark}
If $X$ is a CR manifold with proper, transversal and locally free $G$-action, then the $G$-action on $G^\C \times X$ is free, proper and transversal, hence $G^\C \times^G X$ is a complex manifold. 
Writing $X = G \times^G X$ and applying Lemma \ref{GeneralEmbedding}, one gets that $X \rightarrow G^\C \times^G X$ is an embedding. 
From the construction, it follows that $G^\C \times^G X$ is the universal equivariant extension of $X$.
\end{remark}

\section{Pseudoconvexity}\label{SectionPseudoconvex}

In general, we need to impose additional conditions on a CR manifold to ensure the existence of an embedding into $\C^n$.

In the presence of a $G$-action, we will generalise the notion of pseudoconvexity to CR manifolds of higher codimension and establish a link between strongly pseudoconvex CR manifolds and positive line bundles.

\begin{definition}
Let $X$ be a CR manifold of dimension $(2n,d)$ with an action of a Lie group $G$. We say that the action of $G$ on $X$ is \textbf{transversal of codimension one} if $\textrm{dim} \, \g(x) = d-1$ for all $x \in X$ and 
\begin{gather*}
\g(x) \cap (T^{1,0}_x X \oplus T^{0,1}_x X) = \{ 0 \}.
\end{gather*}
\end{definition}

\begin{example} 
Let $G$ be a Lie group and $M$ a CR manifold of dimension $(2n,d)$ with transversal CR action of $G$. 
Let $X$ be a $G$-invariant hypersurface of $M$, then $X$ is a CR manifold of dimension $(2(n-1), d+1)$ with $G$-action which is transversal of codimension one.

To see this, define $W_x = (T_x^{1,0} M \oplus T_x^{0,1} M) \cap T_xM$, then 
\begin{gather*}
\textrm{dim} W_x + \textrm{dim} T_xX  = \textrm{dim} (T_xX + W_x) + \textrm{dim} ( T_xX \cap W_x).
\end{gather*}
Since $X$ is $G$-invariant, $T_x X$ does contain $\g(x)$ and $T_x X + W_x = T_x M$. 
This implies $\textrm{dim} ( T_xX \cap W_x) = 2n -1$.

Now every $W_x$ has a complex structure, which we denote by $J_x$.
Define $V_x := T_x X \cap W_x$, then
\begin{gather*}
\textrm{dim} (V_x \cap J_x V_x) = \textrm{dim} \, V_x + \textrm{dim} \, J_x V_x - \textrm{dim} ( V_x + J_x V_x),
\end{gather*}
hence $\textrm{dim} (V_x \cap J_x V_x) = 2 (n-1)$.
\end{example}
\smallskip 

Let $X$ be a CR manifold with locally free action of a Lie group $G$ which is transversal of codimension one and assume that $X$ is orientable.

Let $L$ be a line bundle transversal to $\g (x) \oplus (T^{1,0}_x X \oplus T^{0,1}_x X ) \cap T_x X$ in $TX$. 
Since $(T^{1,0} X \oplus T^{0,1} X)\cap TX $ is a complex vector space bundle over $X$, it is orientable.
Since the action is locally free, the bundle $\bigcup_x \g (x)$ is trivial and $L$ is orientable, hence trivial.

We conclude that in this case, we always find transversal vector fields, i.e. a vector field $T$ on $X$ such that $\C T (x) \oplus \C \g (x) \oplus T^{1,0}_x X \oplus T^{0,1}_x X = \C T_x X$.

In fact, the above argument shows that, under the above conditions on the group action, $X$ is orientable if and only if there exist transversal vector fields.\\

From now on, let $X$ be a CR manifold with group action of $G$ which is transversal of codimension one and let $T$ be a transversal vector field on $X$. 
We define a $1$-form on $X$ via $\alpha (T) = 1$ and $\alpha (\g (x) \oplus T_x X \cap (T^{1,0} X \oplus T^{0,1} X )) = 0$, which we will call the \textbf{projection onto $T$}.
For $V,W \in T^{1,0} X$, we define $\omega (V, W) := \frac{1}{2i} d \alpha (V,\overline{W})$, by extending $d \alpha$ to a $\C$-bilinear map on every fibre of $\C T X$.
By definition, $\omega$ is a hermitian form on $T^{1,0} X$.

\begin{definition}
Let $T$ be a transversal vector field on $X$, the form $\alpha$  the projection onto $T$ and $\omega$ be the induced hermitian form.
We say that $T$ is \textbf{strongly pseudoconvex} if $\omega$ is positive definite.
We call $X$ \textbf{strongly CR-pseudoconvex} if there exists a strongly pseudoconvex vector field $T$ on $X$.
\end{definition}

Note that if X is of CR codimension one with trivial group action, this is the usual definition of strong pseudoconvexity.  

For $V,W \in \mathcal{C}^\infty (X , T^{1,0} X)$, we get 
\begin{gather}\label{FormsFormula}
d \alpha (V, \overline{W} ) = V \alpha (\overline{W}) - \overline{W} \alpha (V) - \alpha ([V, \overline{W}]) = -\alpha ( [V, \overline{W}]).
\end{gather}

Let $S$ be another transversal vector field.
We get that $\alpha (S)$ is pointwise non-vanishing, therefore $\beta :=  \alpha(S)^{-1} \cdot \alpha$ is the projection onto $S$. 
Equation (\ref{FormsFormula}) then implies
\begin{gather}\label{ChangeOfVF}
\frac{1}{2i} d \beta (V,\overline{W}) = \frac{1}{2i} \alpha(S)^{-1} \cdot d \alpha (V,\overline{W}).
\end{gather}

\begin{definition}
We say that a vector field $T$ on a CR manifold $X$ is a \textbf{CR vector field} if the flow of $T$ acts by CR automorphisms.
\end{definition}

Note that in general, there do not need to exist transversal CR vector fields. 

Let $X$ be a CR manifold with action of a compact Lie group $K$ which is transversal of codimension one. 
Let $T$ be a transversal vector field and $\alpha$ the projection onto $T$.

The $K$-action leaves the bundle $ \C \k (x) \oplus T^{1,0}_x X \oplus T^{0,1}_x X$ invariant, hence for $k \in K$, we get $\alpha ( d_x k (T(x))) \neq 0$.
We say that $K$ preserves the orientation of $T$ if $\alpha (d_x k( T(x))) >0$ for all $k \in K$ and $x \in X$.
Note that if $K$ is connected, this is always the case.

\begin{lemma}
Let $X$ be a CR manifold with $K$-action which is transversal with codimension one.
Let $T$ be a transversal CR vector field such that $K$ preserves the orientation of $T$.
Then there exists a transversal CR vector field $\tilde T$ which is $K$-invariant.
If $T$ is strongly pseudoconvex, then also $\tilde {T}$ is strongly pseudoconvex.
\end{lemma}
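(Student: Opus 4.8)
The statement is essentially an averaging argument, so the natural approach is to produce $\tilde T$ by integrating the translates $d_x k(T(k^{-1}x))$ over $K$ with respect to normalized Haar measure. Concretely, I would set
\[
\tilde T(x) := \int_K (k_* T)(x)\, dk,
\]
where $(k_*T)(x) = d_{k^{-1}x} k \bigl(T(k^{-1}x)\bigr)$, using that $K$ is compact so the integral of a smooth family of vector fields is again a smooth vector field, and is manifestly $K$-invariant by invariance of Haar measure. The three things to check are: $(i)$ $\tilde T$ is transversal; $(ii)$ $\tilde T$ is a CR vector field; $(iii)$ strong pseudoconvexity is preserved.

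For transversality, the key observation is that each $k_*T$ is transversal, i.e. $\C(k_*T)(x)\oplus\C\k(x)\oplus T^{1,0}_xX\oplus T^{0,1}_xX=\C T_xX$, because $k$ is a CR automorphism preserving the $K$-action (so it preserves $\C\k(x)\oplus T^{1,0}_xX\oplus T^{0,1}_xX$ and maps transversal vectors to transversal vectors). Applying the projection $\alpha$ onto the original $T$, the hypothesis that $K$ preserves the orientation of $T$ gives $\alpha\bigl((k_*T)(x)\bigr)>0$ for every $k$, hence $\alpha(\tilde T(x)) = \int_K \alpha((k_*T)(x))\,dk > 0$, which forces $\tilde T(x)\notin \C\k(x)\oplus T^{1,0}_xX\oplus T^{0,1}_xX$, i.e. $\tilde T$ is transversal. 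This is the point where the orientation hypothesis is genuinely needed: without it the integral could cancel.

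For the CR property, I would argue that the space of CR vector fields is a linear subspace of $\Gamma(X,\C TX)$ that is closed under the pushforward by $K$ (since each $k\in K$ is a CR automorphism, $k_*T$ is again CR), and is closed under the integration against Haar measure — the latter because the defining condition of a CR vector field can be phrased as $\mathcal{L}_T$ preserving $\Gamma(U,T^{1,0}X)$, equivalently $[T,\Gamma(T^{1,0}X)]\subset\Gamma(T^{1,0}X\oplus\text{(something)})$; more cleanly, one uses that $T$ being CR is equivalent to a pointwise-in-$k$ linear condition that survives integration (e.g. the $\C TX$-valued one-form $\xi\mapsto [T,\xi]$ composed with projection to $\C TX/(T^{1,0}X)$ annihilates $T^{0,1}X\oplus T^{1,0}X$, and this is linear in $T$). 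Hence $\tilde T$ is CR.

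For pseudoconvexity: if $\alpha$ is the projection onto $T$ and $\omega=\frac{1}{2i}d\alpha(\cdot,\overline{\cdot})>0$, let $\tilde\alpha$ be the projection onto $\tilde T$. Since each $k$ is a CR automorphism preserving the $K$-orientation, $k^*\alpha$ is (a positive multiple of) the projection onto $k_*^{-1}T$ and $\frac{1}{2i}d(k^*\alpha)(\cdot,\overline{\cdot}) = k^*\omega$ is still positive definite on $T^{1,0}X$ because $dk$ maps $T^{1,0}X$ isomorphically to itself. Now $\tilde\alpha$ is a positive scalar multiple of $\int_K k^*\alpha\, dk$ (the scalar being $\alpha(\tilde T)^{-1}>0$, by the same computation as in equation~(\ref{ChangeOfVF})), and $\frac{1}{2i}d\bigl(\int_K k^*\alpha\,dk\bigr)(V,\overline V) = \int_K (k^*\omega)(V,\overline V)\,dk > 0$ for $V\neq 0$. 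Multiplying by the positive function $\alpha(\tilde T)^{-1}$ and invoking (\ref{ChangeOfVF}) shows $\tilde\omega>0$, so $\tilde T$ is strongly pseudoconvex.

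**Main obstacle.** The routine part is the integration; the step that needs the most care is verifying that "being a CR vector field" is preserved under Haar averaging — one must pick the right linear characterization of the CR condition (a bundle-map statement, not just an inclusion of sheaves of sections) so that it visibly commutes with integration, and one must make sure the identification of $\tilde\alpha$ with a positive multiple of $\int_K k^*\alpha\,dk$ is correct on the nose, using that $d$ commutes with the integral and with pullback.
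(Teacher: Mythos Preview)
Your approach is essentially the paper's: set $\tilde T=\int_K (k_*T)\,dk$, use the orientation hypothesis to get $\alpha(\tilde T)>0$ and hence transversality, then handle the CR and pseudoconvexity conditions. Two minor differences are worth noting. For strong pseudoconvexity the paper is more direct than you: since $\alpha$ and the projection $\tilde\alpha$ onto $\tilde T$ share the kernel $\k(x)\oplus (T^{1,0}_xX\oplus T^{0,1}_xX)\cap T_xX$, one has $\tilde\alpha=\alpha(\tilde T)^{-1}\alpha$ outright, and equation~(\ref{ChangeOfVF}) gives $\tilde\omega=\alpha(\tilde T)^{-1}\omega>0$ immediately; your detour through $\int_K k^*\alpha\,dk$ is correct but unnecessary, since each $k^*\alpha$ is already a positive multiple of $\alpha$. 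For the CR property the paper argues via the flow of $\tilde T$, whereas your route through the linearity (in $T$) of the bracket condition $[T,\Gamma(T^{1,0}X)]\subset\Gamma(T^{1,0}X)$ is arguably cleaner and avoids having to identify the flow of an averaged vector field.
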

\begin{proof}
Let $\tilde{T}(x) := \int_K d k  (T (k^{-1}x)) dk$ be the vector field averaged over $K$.
If $\alpha$ is the projection onto $T$, then $\alpha (\tilde{T}) = \int_K \alpha (d k T(k^{-1}x) ) dk$.
Because $K$ preserves the orientation of $T$, we conclude $\alpha( \tilde{T}) > 0$. 
In particular, we get that $\tilde T$ is transversal.

The flow of $\tilde{T}$ is given by $\Phi^{\tilde{T}}_t (x)= \int_K k (\Phi^T_t( k^{-1} x)) dk$, where $\Phi^T_t$ is the flow of $T$, which implies that $\tilde{T}$ is CR.
Strong pseudoconvexity of $\tilde T$ follows from Equation (\ref{ChangeOfVF}).
\end{proof}

\begin{theorem}\label{Flow}       
Let $X$ be a compact CR manifold with locally free $K$-action which is transversal of codimension one.
Let $T$ be a strongly pseudoconvex $K$-invariant CR vector field. 
Then there exists a strongly pseudoconvex $K$-invariant CR vector field $\tilde{T}$ such that the flow of $\tilde{T}$ defines an $S^1$-action.
\end{theorem}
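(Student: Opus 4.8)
The plan is to average the flow of $T$ over time to produce a vector field whose flow is periodic. Since $T$ is a $K$-invariant CR vector field on the compact manifold $X$, its flow $\Phi^T_t$ is defined for all $t \in \R$ and acts by CR automorphisms commuting with $K$. The orbits of this $\R$-action project to orbits of an $\R$-action on the compact space $X/K$; moreover, since the $K$-action is transversal of codimension one and locally free, the $\R$-orbits on $X/K$ are transverse to the CR directions, so $X/K$ is (an orbifold which is) foliated by the flow lines of $T$, and by compactness plus local freeness of the combined action one expects the $\R$-action on $X/K$ to factor through a torus or to have all orbits closed. The key point I would extract first is: \emph{the flow lines of $T$ on $X$ are all closed}, equivalently the induced $\R$-action on $X/K$ has compact orbits. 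This should follow because $X/K$ is compact and the quotient $(X/K)/\langle\Phi^T\rangle$ ought to be the complex space from Theorem \ref{QuotientComplex} applied appropriately — or more elementarily, because the leaf space is Hausdorff when one stays near a given leaf and a closed leaf exists, then a Reeb-stability-type argument propagates closedness.

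Once all orbits are closed, each orbit through $x$ has some minimal period $\tau(x) > 0$; the function $x \mapsto \tau(x)$ is $K$-invariant, lower semicontinuous, and (on the set where nearby orbits have comparable period) locally constant up to integer factors. I would then rescale: set $\tilde T(x) := \tau(x)^{-1} \cdot 2\pi \cdot T(x)$ — but $\tau$ need not be continuous where orbit periods jump, so the honest move is to work on $X/K$, show the period function descends and is continuous there after passing to a finite cover or using that $X/K$ is an orbifold, and rescale $T$ by a smooth positive $K$-invariant function $c$ so that $cT$ has all orbits of period exactly $2\pi$. Because we multiply $T$ by a positive function, the rescaled field $\tilde T = cT$ is still transversal (its $\alpha$-value stays positive), still CR (rescaling a CR vector field by a CR-invariant, in particular $K$-invariant and flow-adapted, function preserves the CR property — one checks the flow is a time-reparametrisation along each orbit), and still strongly pseudoconvex by Equation (\ref{ChangeOfVF}), since $\omega$ only gets multiplied by the positive factor $\alpha(\tilde T)^{-1}\alpha(T)^{-1}$-type scalar. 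The flow of $\tilde T$ is then $2\pi$-periodic, hence defines an $S^1$-action, and $K$-invariance of $c$ guarantees this $S^1$-action commutes with $K$.

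The main obstacle I anticipate is establishing that \emph{all} flow lines of $T$ are closed and that the period function is well enough behaved to be smoothed into a $K$-invariant rescaling factor. On a general compact manifold an $\R$-action certainly can have dense orbits; what saves us here is the transversality of codimension one combined with local freeness, which forces the $T$-orbits together with the $K$-orbits to sweep out, locally, a product $S^1 \times (K\text{-orbit}) \times (\text{CR slice})$, so the $T$-direction is "rigid" and the leaf space is nice. Making this precise — e.g.\ by invoking that $X/K$ is a compact complex orbifold (the Remark after Theorem \ref{QuotientComplex}) on which $T$ induces a holomorphic-type flow with closed orbits, or by a direct Reeb stability argument once one closed orbit is found — is the crux; the rescaling and the verification that CR-ness, transversality, and strong pseudoconvexity all survive are then routine given the formulas already in this section.
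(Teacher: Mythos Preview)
Your approach has a genuine gap at exactly the point you flag as ``the main obstacle'': the assertion that all flow lines of $T$ are closed is neither proved nor true in general. Transversality of codimension one together with local freeness of the $K$-action gives you a local product structure $(\text{interval}) \times (K\text{-orbit}) \times (\text{CR slice})$, but this says nothing about the \emph{global} return behaviour of the $T$-flow. If the one-parameter group generated by $T$ inside the CR automorphism group has closure a torus of dimension $\geq 2$, then the $T$-orbits on $X$ are dense in the corresponding torus orbits and are not closed; no Reeb-stability argument can start, because there need not be a single closed $T$-orbit. The subsequent rescaling by a period function, and the claim that a pointwise positive rescaling $cT$ stays a CR vector field, become moot --- and the latter is itself doubtful, since a non-constant reparametrisation of a CR flow is not obviously CR.

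The paper's argument avoids this issue entirely and does \emph{not} attempt to show that the orbits of $T$ are closed. Instead one constructs, using $\alpha$, a metric on the trivial bundle $\k(x)$, and the Levi form $\frac{1}{2i}d\alpha$, a Riemannian metric $g$ for which the flow $\Phi^T_t$ acts by isometries (here one uses that $T$ is CR and $K$-invariant, so $\alpha$ is flow-invariant). Since $X$ is compact, $\mathrm{Iso}(X,g)$ is compact, hence so is the closed subgroup $U$ of CR isometries commuting with $K$. The one-parameter group $t \mapsto \Phi^T_t$ lies in $U$, and its closure is a torus; generators of circle subgroups are dense in the Lie algebra of this torus, so one picks $\tilde T$ close to $T$ there. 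This $\tilde T$ is automatically CR and $K$-invariant (it lies in $\mathrm{Lie}(U)$), and by choosing it close enough to $T$ it remains transversal and, via Equation~(\ref{ChangeOfVF}), strongly pseudoconvex. Note that $\tilde T$ is a small \emph{perturbation} of $T$ within a compact group, not a rescaling of $T$.
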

\begin{proof}
Let $\alpha$ be the projection onto $T$. We will define a riemannian metric on $X$.
Since the $K$-action is locally free, the bundle $\k (x)$ is trivial and we choose a riemannian metric $h$ for it. 

A vector $V \in T_x X$ can be decomposed as $V = V_T + V_\k + V_\C + V_{\overline{\C}}$, where $V_\C \in T^{1,0}_xX$, $V_{\overline{\C}} \in  T^{0,1}_xX$, $V_T \in \R T(x)$ and $V_\k \in \k(x)$.
Define a riemannian metric via
\begin{gather*}
g(V,U) = \alpha(V_T) \alpha(U_T) + h(V_\k, U_\k) +\frac{1}{2i}  d\alpha( U_\C , \overline{V}_\C) - \frac{1}{2i} d \alpha(U_{\overline{\C}}, \overline{V}_{\overline{\C}} ).
\end{gather*}
Since $\frac{1}{2i}  d\alpha( U_\C , \overline{V}_\C)$ is positive definite on $T^{1,0} X$, the form $\frac{1}{2i}  d\alpha( U_{\overline{\C}} , \overline{V_{\overline{\C}}}) = - \frac{1}{2i}  d\alpha( \overline{V_{\overline{\C}}} , U_{\overline{\C}})$ is negative definite on $T^{0,1}X$.\\

Denote by $\Phi^T_t$ the flow of $T$.
Since $T$ is $K$-invariant and CR, the flow $\Phi^T_t$ commutes with the $K$-action and leaves the bundles $T^{1,0}X$ and $T^{0,1}X$ invariant. 
This implies that $\alpha$ is invariant under $\Phi^T_t$ and that the flow acts by isometries with respect to $g$.\\

The isometry group $\textrm{Iso}(X)$ of $(X,g)$ acts properly on $X$ and since $X$ is compact, we conclude that $\textrm{Iso}(X)$ is compact.
Then the subgroup $U$ of $\textrm{Iso}(X)$ given by the CR isometries which commute with the $K$-action is compact, as well.

Now $T$ defines a subgroup of $U$, which is contained in a torus and the result follows because the vector fields defining $S^1$-actions are dense in this torus.
\end{proof}

Now let $X$ be a compact CR manifold with transversal, locally free CR action of $S^1 \times K$.
Denote $H = S^1 \times K$, then $Y := H^\C \times^H X$ is a complex manifold.
Now consider the CR manifold $(\C \times K^\C) \times X$ with $S^1 \times K$-action given by $((s,k),(z,h,x)) \mapsto (zs^{-1}, hk^{-1}, skx)$.
This action is transversal, therefore $Z := (\C \times K^\C) \times^H X :=  ((\C \times K^\C) \times X) / H$ is a complex space. 
With the same argument, $Z/K^\C = \C \times^{S^1}( X/K)$ is a complex space, as well.
The natural map $Y \rightarrow Z$ is an open holomorphic injective immersion.\\

From Proposition \ref{sliceLemma}, we conclude the existence of holomorphic slices on $H^\C \times^H X$.
The following theorem will be the main motivation for our further study.

\begin{theorem}\label{OrbifoldBundle}     
Let $X$ be a compact CR manifold with transversal, locally free, CR $S^1 \times K$-action such that the vector field $T$ induced by the $S^1$-action is strongly pseudoconvex.
Then the set 
\begin{gather*}
W := \{ [z,x] \in \C \times^{S^1} X/K =  Z/K^\C \, | \, |z|^2 < 1 \}
\end{gather*}
 is strongly pseudoconvex in $Z/K^\C$.
\end{theorem}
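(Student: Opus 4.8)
The plan is to recognise $Z/K^\C$ as the total space of a Hermitian orbifold line bundle over a compact complex orbifold, $W$ as its open unit disk bundle, and $\partial W$ as the CR-embedded copy of $X/K$ inside its complexification; the assertion then becomes the classical statement that the unit disk bundle of a negative line bundle has strongly pseudoconvex boundary, the negativity coming from the strong pseudoconvexity of $T$.

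First I would build the line bundle. Since the $(S^1\times K)$-action on $X$ is transversal, the $K$-action by itself is transversal of codimension one: one has $\dim\k(x)=d-1$, and $\k(x)\cap(T^{1,0}X\oplus T^{0,1}X)=\{0\}$ forces $\C\k(x)\cap(T^{1,0}X\oplus T^{0,1}X)=\{0\}$. Hence, as in \cite{LooseCR} (in its orbifold form, since $K$ acts only locally freely), $X/K$ is a codimension-one CR orbifold for which $X\to X/K$ is CR, and after averaging $T$ over $K$ the $K$-invariant form $\alpha$ and the Hermitian form $\omega=\tfrac{1}{2i}d\alpha(\cdot,\overline{\cdot})$ descend to $X/K$, with $\omega$ still positive definite. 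The induced $S^1$-action on $X/K$ is transversal, so $\C^*\times^{S^1}(X/K)=Y/K^\C$ (with $Y=H^\C\times^HX$) is its equivariant complexification, and $\C\times^{S^1}(X/K)=Z/K^\C$ adjoins the zero set $\{z=0\}$, which is the compact complex orbifold $B:=(X/K)/S^1$; the map $[z,y]\mapsto[y]$ exhibits $\mathcal L:=Z/K^\C$ as a holomorphic orbifold line bundle over $B$. Because the $S^1$-action on $\C$ is unitary, $\|[z,y]\|^2:=|z|^2$ is a well-defined Hermitian metric $h$ on $\mathcal L$; then $W=\{\|v\|^2<1\}$ is the open unit disk bundle, $\partial W=\{|z|^2=1\}$ is the unit circle bundle (in particular $z=0$ never occurs on $\partial W$, so $\rho:=|z|^2-1$ is a global defining function with $d\rho\neq0$ there), and under $x\mapsto[1,x]$ the hypersurface $\partial W$ is exactly the CR-embedded image of $X/K$ inside $\mathcal L^\times=Y/K^\C$.

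It then remains to show that $\rho$ has positive definite Levi form on $T^{1,0}\partial W=T^{1,0}(X/K)$; this is local on $\partial W$, so I would work in a uniformising chart of $B$ together with a local holomorphic frame, in which $\|v\|^2=|w|^2e^{-\varphi}$ for a smooth real $\varphi$. Writing $\rho=e^{\psi}-1$ with $\psi=\log\|v\|^2=\log|w|^2-\varphi$ gives $\partial\rho=e^\psi\partial\psi$, so on $\ker\partial\rho=T^{1,0}\partial W$ the Levi form of $\rho$ equals $e^\psi\,i\partial\bar\partial\psi=-e^\psi\,i\partial\bar\partial\varphi$ (the $\log|w|^2$ term being pluriharmonic), i.e.\ up to the positive factor $e^\psi\equiv1$ on $\partial W$ it is minus the Chern curvature of $h$. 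Pulling back along the circle-bundle projection $X/K\to B$ and using the identity $d\alpha(V,\overline W)=-\alpha([V,\overline W])$ from (\ref{FormsFormula}) — equivalently, using a $T$-adapted local slice for the $K$-action to put $T^{1,0}(X/K)$ in the rigid normal form spanned by $Z_j=\partial_{z_j}+ia_j(z,\bar z)\partial_\theta$ — one finds that $-i\partial\bar\partial\varphi$ restricts on $T^{1,0}(X/K)$ to a positive constant times $\omega$. Hence the Levi form of $\rho$ on $T^{1,0}\partial W$ is a positive multiple of $\omega$, which is positive definite since $T$ is strongly pseudoconvex, and therefore $W$ is strongly pseudoconvex in $Z/K^\C$.

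The main obstacle is exactly this last identification: one must verify that the metric with $\|v\|^2=|z|^2$ has Chern curvature equal, \emph{with the correct sign} and up to a positive multiple, to the descent of $-d\alpha(\cdot,\overline{\cdot})$ — this requires carefully matching the complex structure on the slice extensions and on $Z/K^\C$ against the CR structure of $X$ and the normal form of a $T$-adapted slice, and an error in sign would instead make the complement of $W$ pseudoconvex. A secondary, but routine, point is that $B$ and $X/K$ are only orbifolds, so everything above is carried out in uniformising charts, where finite stabilisers act by (unitary, after averaging) linear maps and do not affect the curvature computation; thus $W$ is strongly pseudoconvex as a complex space.
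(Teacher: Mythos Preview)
Your proposal is correct and follows essentially the same route as the paper: recognise $Z/K^\C$ locally as a Hartogs/disk-bundle over a holomorphic slice for the $H^\C$-action, write $|z|^2=|w|^2e^{-\varphi}$ in a local frame, and identify the Levi form of $\partial W$ with $-\tfrac{1}{2}\partial\bar\partial\varphi$, which in turn is shown to coincide with the given form $\omega$ on $T^{1,0}X$. The one step you flag as the ``main obstacle'' --- matching the Chern curvature with $\tfrac{1}{2i}d\alpha$ with the correct sign --- is exactly what the paper carries out by an explicit coordinate computation of the $(1,0)$-vectors $Z_j$ on $\partial U$ and the resulting expression $\alpha=d\theta+\tfrac{i}{2}(\partial\varphi-\bar\partial\varphi)$, after which the finite-quotient (orbifold) passage is handled via \cite{GrauertRemmert} just as you indicate.
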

\begin{proof}
For the map $h \colon Z/K^\C \rightarrow \R$, $[z,x] \mapsto |z|^2 $, we have $W = \{ p \in Z/K^\C | h(p) < 1 \}$.

Let $U$ be the preimage of $W$ under $Z \rightarrow Z/K^\C$. We pull back $h$ to a function on $Z$, which we will still denote by $h$.
We have $U = \{ z \in Z | h(z) < 1\}$.
The map $x \mapsto [1,1,x]$ is a CR embedding of $X$ and $T$ extends to a $\C^* \times K^\C$-invariant vector field on $Y$, which is the vector field induced by the $S^1$-action on $Y$.

The boundary $\partial{U} = \{ [1, g,x] \in (\C^* \times K^\C) \times^{S^1 \times K} X \} = K^\C X$ is a hypersurface and therefore a CR submanifold of $Y$ with transversal $S^1$-action. 
We therefore have $\C T_x \partial U = T^{1,0}_x \partial U \oplus T^{0,1}_x \partial U \oplus \C T(x)$ and define the projection onto $T$ via $\alpha (T(x)) = 1$ and $\alpha (T^{1,0} \partial U \oplus T^{0,1} \partial U) = 0$.\\

Take $y_0 \in \partial W$ and let $\pi \colon Z \rightarrow Z/K^\C$ be the quotient map. 
We then find an $x_0 \in X$ such that $\pi(x_0) = y_0$.
Let $S$ be a holomorphic slice through $x_0$ for the $H^\C$-action and $\Omega = H^\C \times^L S$, where $L$ is the finite isotropy of $H^\C$ in $x_0$. 
We may assume that $S$ is biholomorphic to a ball and $\C T_{x_0} S = T^{1,0}_{x_0} X \oplus T^{0,1}_{x_0} X$, since actions of compact groups at fixed points can be linearised. 

Let $[w,g,y]$ be the coordinates of $(\C^* \times K^\C) \times^L S$ and take some smooth function $\varphi$ on $\Omega$ such that $h[w,g,y] \cdot e^{\varphi{[w,g,y]}} = |w|^2$.
By construction, $\varphi$ is $H^\C$-invariant and $\Omega \cap \partial{U}$ is given by $\Phi [w,g,y] := |w|^2 - e^{\varphi[w,g,y]} = 0$.\\

Let $\frac{\partial}{\partial w}$ be the complex vector field induced be the $\C^*$-component in $\Omega$, let $(\frac{\partial}{\partial z_j})_{j= 1,...,m}$ be a basis for $T^{1,0} S$ and extend this to a basis $(\frac{\partial}{\partial w}, \frac{\partial}{\partial z_j})_{j = 1,...,n}$ for $T^{1,0} \Omega$.
This is always possible after shrinking $\Omega$.

We want to compute the CR structure of $\partial{U} \cap \Omega$ in terms of the above basis and are therefore considering the equation 
\begin{equation*}
0 = d \Phi ( \sum_j a_j \frac{\partial}{\partial z_j} + b \frac{\partial}{\partial w}) =- \sum_j a_j e^\varphi \frac{\partial}{\partial z_j} \varphi  + b\overline{w}.
\end{equation*}
 This implies that $Z_j = \frac{\partial}{\partial z_j} + \frac{w}{|w|^2} e^\varphi \frac{\partial}{\partial z_j} \varphi \frac{\partial}{\partial w}$ gives a basis for the CR structure.

Rewriting $\frac{\partial}{\partial w}$ in Polar coordinates $(r,\theta)$ gives 
\begin{equation*}
\frac{\partial}{\partial w} = \frac{1}{2} \frac{\overline{w}}{|w|} \frac{\partial}{\partial r} - \frac{1}{2}i \frac{\overline{w}}{|w|^2} \frac{\partial}{\partial \theta}.
\end{equation*}
We get $Z_j = \frac{\partial}{\partial z_j} + \frac{1}{2 |w|}\frac{\partial}{\partial z_j} \varphi \frac{\partial}{\partial r} - \frac{1}{2}i \frac{\partial}{\partial z_j} \varphi \frac{\partial}{\partial \theta}$, using that $\frac{e^\varphi}{|w|^2} = 1$ on the boundary of $U$.
On $\partial U$, the projection $\alpha$ is given by $\alpha( \frac{\partial}{\partial \theta}) = \alpha (T)= 1$ and $\alpha(Z_j) = \alpha ( \overline{Z}_j) = 0$, therefore 
\begin{equation*}
\alpha = d \theta + \sum_j \frac{1}{2} i \frac{\partial}{\partial z_j} \varphi dz_j - \sum_j \frac{1}{2} i \frac{\partial}{\partial \overline{z}_j} \varphi d \overline{z}_j.   
\end{equation*}
Now $\C T_{x_0} S =T^{1,0}_{x_0} X \oplus T^{0,1}_{x_0} X$, hence for $v,w \in T^{1,0}_{x_0} X$, we have the equation 
\begin{equation*}
\frac{1}{2i} d \alpha (v, \overline{w})= -\sum_{ij} \frac{1}{2}  \frac{\partial^2}{ \partial z_j \partial \overline{z}_i} \varphi (v, \overline{w}),
\end{equation*}
which implies that $-\varphi$ is strictly plurisubharmonic in a neighbourhood of $x_0$ in $S$.
After shrinking $S$, we may assume that $- \varphi $ is strictly plurisubharmonic on $S$.

We restrict $\varphi$ to an $L$-invariant function on $S$, write $\Omega / K^\C = \C^* \times^L S$ and 
\begin{gather*}
W \cap ( \Omega/ K^\C) = \{ [w,y] \in \C^* \times^L S \, | \, |w|^2 < e^{\varphi(y)} \}.
\end{gather*}

Define the map $\pi_L \colon \C^* \times S \rightarrow \C^* \times^L S$, then the set $\pi_L^{-1}(W) = \{ (w,y) \, | \, |w|^2 < e^{\tilde{\varphi}(y)} \}$ is a Hartogs domain, which is strongly pseudoconvex because $-\varphi$ is strictly plurisubharmonic and $S$ is biholomorphic to a ball.
Now the boundary of $\pi^{-1}_L(W)$ in $\C^* \times S$ is given by a strongly plurisubharmonic $L$-invariant map $\rho$.
Then $\rho$ defines a map $\overline{\rho}$ on $\C^* \times^L S$, which is plurisubharmonic \cite[Satz 3]{GrauertRemmert} and it only remains to prove that it remains plurisubharmonic under perturbations.
But this follows using \cite[Satz 3]{GrauertRemmert} again and the fact that $\pi_L$ is proper. 
\end{proof}

\begin{remark}
We have actually shown that the orbifold line bundle $Z/K^\C \rightarrow Y/( \C^* \times K^\C)$ is weakly negative (see Section \ref{ProjectiveEmbedding}).
\end{remark}

\section{Equivariant Embeddings}

In this section we prove an embedding theorem for strongly pseudoconvex CR manifolds.

Let $H$ be a subgroup of its universal complexification $H^\C$ and assume that $H^\C$ is complex reductive.
Let $G$ be a normal subgroup of $H$ such that $H = G \rtimes S^1$.
This is a slightly more general setting then the direct product $K \times S^1$ we considered before.
The group $H^\C$ is isomorphic to $G^\C \rtimes \C^*$ and $\C^*$ is a closed subgroup of $H^\C$.

For $h \in H$, we denote by $h_S \in H / G \cong S^1$ the image of $h$ in the quotient and for $h \in H^\C$ we write $h_C \in H^\C /G^\C \cong \C^*$, respectively.
Note that the maps $h \mapsto h_S$ and $h \mapsto h_C$ are group morphisms.

Now let $X$ be a CR manifold with transversal, proper CR action of $H$ and assume that $H_x^0 < G_x$ in every point $x \in X$.
Denote by $T$ the vector field induced by the $S^1$-action.
The condition on the isotropy groups is equivalent to $\C T(x) \oplus \C \g(x) \oplus T^{1,0}_x X \oplus T^{0,1}_x X = \C T_x X$.

Note that the condition $H_x^0 < G_x^0$ is always satisfied if the $H$-action is locally free.\\

We have an $H$-action on the CR manifold $\C \times X$ given by $(h, (z, x)) \mapsto (zh_S^{-1}, h x)$.
This action is proper and CR and $H_x^0 < G_x^0$ implies that it is transversal. 
We conclude that 
\begin{gather*}
Z := (\C \times X) / H \cong \C \times^{S^1} (X/G)
\end{gather*}
is a complex space.

We may embed $X$ into its universal equivariant extension $Y$ as in Theorem \ref{GlobalExtension}.
We define the complex space $( \C \times Y) / H^\C = \C \times^{\C^*} (Y/G^\C)$ in the same manner as above and show that the natural map $(\C \times X)/H \rightarrow (\C \times Y)/H^\C$ is an isomorphism.

It is sufficient to consider a slice neighbourhood $\Omega = H \times^{H_x} S$ of $X$ and show that the map $\varphi \colon (\C \times \Omega) /H \rightarrow (\C \times \Omega^\C ) / H^\C$ is biholomorphic.
But the inverse of $\varphi$ is given by $\varphi^{-1} \colon (\C \times (H^\C \times^{H_x^\C} S)) /H^\C \rightarrow (\C \times (H \times^{H_x} S))/H$ with $[z,[h,s]] \mapsto [z h_C, [1,s]]$.
  
\begin{definition}
We say that $X$ is \textbf{strongly pseudoconvex} if the set 
\begin{gather*}
Z_\Delta := \{ [w,z] \in \C \times^{S^1} X/G \, | \, |w| <1 \}
\end{gather*}
is strongly pseudoconvex in $Z$.
\end{definition}

The following construction, including Corollary \ref{VanishingCohomology}, is based on ideas of Grauert, see \cite[§3.2]{GrauertEx}.
We will reformulate it here, in our context.

\begin{definition}
Let $M$ be a set with $S^1$-action, $U \subset M$ a subset and $V$ a vector space. We say that a map $f \colon U \rightarrow V$ is of \textbf{order} $d \in \mathbb{Z}$ if for all $s \in S^1$ and $m \in U$ such that $sm \in U$, we have $f(s m) = s^d f(m)$.
\end{definition}

Consider the complex space $X/H = Y/H^\C$.
The structure sheaf $\O^H_Y$ of $Y$ is given by the sheaf of $H$-invariant holomorphic functions on $Y$.
Given a coherent analytic sheaf $\G^Y$ over $Y/H^\C$, we define a sheaf $\G^Z$ over $Z$ as follows.
The holomorphic map $p \colon Z \rightarrow Y/H^\C$ gives the structure sheaf $\O_Z$ of $Z$ the structure of an $\O^H_Y$-module.
Define
\begin{gather*}
\G^Z (U) = \G^Y ( p(U)) \otimes_{\O^H_Y(p(U))} \O_Z ( U ).
\end{gather*}
Now $\G^Z$ is coherent and analytic on $Z$ because $\G^Y$ is coherent and analytic on $Y/H^\C$.\\

The map $Y/H^\C \rightarrow Z$, $y \mapsto [0,y]$ is an embedding of $Y/H^\C$ as the analytic set of $\C^*$-fixed points. 
Define the coherent sheaf $\O_Z^Y$ of holomorphic functions on $Z$ which vanish on $Y/H^\C$. 
We have the (non-analytic) subsheaf $\O_{Z,d}^Y$ of functions vanishing on $Y/H^\C$ which are of order $d$.
The sheaves $\O_Z^Y$ and $\O_{Z,d}^Y$ then define analytic sheaves $\O_Y^Y (U)$ and $\O_{Y,d}^Y (U)$ on $Y/H^\C$ via pulling back with $p$.

Let $\G^Y$ be a coherent analytic sheaf over $Y/H^\C$.
The natural map $\O^Y_{Y,d}(U) \rightarrow \O_Z (p^{-1} (U))$ induces the map 
\begin{gather*}
A_{\leq d_0} \colon \bigoplus_{0 \leq d \leq d_0} H^q(Y/H^\C, \G^Y \otimes \O^Y_{Y,d}) \rightarrow H^q(Z, \G^Z),
\end{gather*}
where $H^q$ denotes the $q$-th \v{C}ech cohomology.\\

We now define a map
\begin{gather*}
B_d \colon H^q(Z, \G_Z) \rightarrow H^q(Y/H^\C, \G^Y \otimes \O^Y_{Y,d})
\end{gather*}
as follows.

Let $U \subset Z$ be open with $U \cap Y/H^\C \neq \emptyset$ and $g \colon U \rightarrow \C$ a holomorphic function.

Let $\pi \colon \C \times Y \rightarrow (\C \times Y )/H^\C=Z$ be the quotient map, then $\overline g := g \circ \pi$ is a $H^\C$-invariant holomorphic function on $\pi^{-1}(U)$.
Consider the set $U_0 = \{ (0,w) \in \pi^{-1}(U) \}$, take $(0,w_0) \in U_0$ and develop $\overline g$ into a power series $\overline g = \sum_d z^d f_{d,w_0} (w)$ in some connected neighbourhood of $(0,w_0)$ of the form $B_{w_0} \times \Omega_{w_0}$.

If $(0, w_1) \in U_0$ is another point such that $B_{w_1} \times \Omega_{w_1} \cap B_{w_0} \times \Omega_{w_0} \neq \emptyset$, we have $f_{d,w_0} = f_{d, w_1}$ on the intersection.
We may therefore assume that $f_d$ is defined on $\Omega := \bigcup_{(0,w) \in U_0} B_w \times \Omega_w$ and set $\overline g_d (z,w) := z^d f_d (w)$ on $\Omega$. 

Now the $\overline g_d$ are actually $H^\C$-invariant, which we will see as follows.
Set $\tilde g_d (hx) := \overline g_d (x)$ for $h \in H^\C$, $x \in \Omega$, we need to show that this is well-defined.
For this, fix some $h \in H^\C$ and observe that every connected component of $h \Omega$ intersects $U_0$.
For $(0,w_0) \in U_0$, we consider the power series expansions of $\overline g$ and $\overline g \circ h^{-1}$, which have to coincide because $\overline g$ is $H^\C$-invariant. 
For $c \in \C^*$, we compute
\begin{gather*}
\overline g_d(h^{-1} (cz,w)) = \overline g_d (cz h_C, h^{-1}w) = c^d \overline g_d (zh_C, h^{-1}w).
\end{gather*}
Comparing coefficients in the power series expansions then gives $\overline g_d = \overline g_d \circ h^{-1}$ on $\Omega \cap h\Omega$ and $\tilde g_d$ is well-defined.

We may therefore assume that the sets $B_w \times \Omega_w$ are $H^\C$-invariant. Since the $\overline g_d$ are of order $d$, they may be extended to functions on $\bigcup_{(0,w) \in U_0}\C \times \Omega_w$.
Now the $\overline g_d$ define holomorphic functions $g_d$ on $p^{-1} (p( U \cap Y/H^\C))$ of degree $d$.\\

For every open subset $U \subset Z$ with $U \cap Y/H^\C \neq \emptyset$, we have a map 
\begin{gather*}
B_{d,U} \colon \G^Z (U) \rightarrow \G^Y( p(U \cap Y/H^\C)) \otimes \O^Y_{Y,d}(p(U \cap Y/H^\C))
\end{gather*}
by restriction in the first component and the construction from above in the second component.

Now we want to define $B_d$ on the cohomology groups.

If $V$ is an open subset of $U$ with $V \cap Y/H^\C \neq \emptyset$, then $V \cap Y/H^\C$ is a subset of $U \cap Y/H^\C$ and from the above construction,  we see 
 \begin{gather}\label{CommuteRes}
 B_{d,V}( \textrm{res}^U_V (g) )=\textrm{res}^{p(U \cap Y/H^\C)}_ {p(V \cap Y/H^\C)} B_{d,U} (g).
 \end{gather}

Given a covering $\mathcal{V}$ of $Z$, we define a covering $\mathcal{V}^Y$ of $Y/H^\C$ via $V_i^Y :=  p (V_i \cap Y/H^\C)$, \\if $V_i \cap Y/H^\C \neq \emptyset$. 
For a cochain $g \in C^q ( \mathcal{V}, \G^Z)$ and a $q$-simplex $\sigma^Y$ for $\mathcal{V}^Y$, with corresponding simplex $\sigma$ for $\mathcal{V}$, we define $B_d(g) ( \sigma^Y) := B_{d, |\sigma|} ( g(\sigma) )$.

Now if $G \in C^{q-1}( \mathcal{V}, \G^Z)$ is a cochain, $\sigma$ is a $q$-simplex for $\mathcal V$ and $\partial_j \sigma$ is the corresponding $(q-1)$-simplex by omitting the $j$-th set, then we have 
\begin{equation*}
\begin{split}
B_{d, |\sigma|} ( \delta G (\sigma)) &= B_{d, |\sigma|} \big( \sum_j (-1)^j \textrm{res}^{|\partial_j \sigma|}_{|\sigma|} (G(\partial_j \sigma)) \big) \\
&=\sum_j (-1)^j \textrm{res}^{p(|\partial_j \sigma| \cap Y/H^\C)}_{p(|\sigma| \cap Y/H^\C)} (B_{d, |\partial_j \sigma|}(G(\partial_j \sigma)) ).
\end{split}
\end{equation*}
From this computation and equation (\ref{CommuteRes}), we conclude that the maps
\begin{gather*}
B_d \colon H^q( Z, \G^Z) \rightarrow H^q ( Y/H^\C, \G^Y \otimes \O^Y_{Y,d}),
\end{gather*}
and
\begin{gather*}
B_{\leq d_0} \colon H^q( Z, \G^Z) \rightarrow \bigoplus_{d \leq d_0} H^q ( Y/H^\C, \G^Y \otimes \O^Y_{Y,d}).
\end{gather*}
are well-defined, since they map cocycles to cocycles and coboundaries to coboundaries.

Since $B_{\leq d_0} \circ A_{\leq d_0} = $Id, we conclude that $A_{\leq d_0}$ is injective for every $d_0$.\\

The following Proposition was proved by Grauert \cite[Proposition 4]{GrauertLevi} for the structure sheaf on complex manifolds and the proof also holds for this case using \cite[Theorem 5.4]{Fornaess}, also see \cite[Hilfssatz 1]{GrauertEx}. 

\begin{proposition}\label{CohomologyFinite}
Let $Z$ be a complex space and $\Omega \subset Z$ a strongly pseudoconvex, relatively compact set.
Let $\G$ be a coherent analytic sheaf on $Z$.
Then the complex vector spaces $H^q (\Omega, \G)$ are finite-dimensional for $q > 0$. 
\end{proposition}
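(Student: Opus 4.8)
The plan is to reduce the statement to the corresponding result for the structure sheaf on complex manifolds, which is exactly Grauert's finiteness theorem for strongly pseudoconvex domains \cite[Proposition 4]{GrauertLevi}, combined with the local vanishing/coherence machinery. First I would recall that strong pseudoconvexity of a relatively compact $\Omega \subset Z$ means that $\partial \Omega$ admits a strictly plurisubharmonic defining function in a neighbourhood (allowing finitely many exceptional points where $Z$ may fail to be smooth), so that $\Omega$ is locally of the form $\{ \varrho < 0 \}$ with $\varrho$ strictly plurisubharmonic in ambient local models. I would then take a finite cover of $\overline{\Omega}$ by Stein open subsets $U_i \subset Z$ realised as analytic subsets of polydiscs, chosen fine enough that on each $U_i$ the coherent sheaf $\G$ has a finite free resolution and that $\Omega \cap U_i$ is itself strongly pseudoconvex (or Stein, away from $\partial \Omega$). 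On such Stein pieces, Cartan's Theorem B gives vanishing of the higher cohomology of $\G$, and by the Leray spectral sequence applied to the cover, the \v{C}ech cohomology $H^q(\Omega, \G)$ for $q>0$ is computed from the cover and is controlled by the boundary behaviour.

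The substantive step is the finiteness estimate, which I would obtain exactly as in Grauert's original argument, now invoked in the form stated for complex spaces via \cite[Theorem 5.4]{Fornaess} and \cite[Hilfssatz 1]{GrauertEx}: using a strictly plurisubharmonic exhaustion near $\partial\Omega$, one produces, for each $c$ slightly below the boundary level, a "bumping" family $\Omega_c \subset \Omega$ and shows that the restriction maps $H^q(\Omega, \G) \to H^q(\Omega_c, \G)$ are isomorphisms for $q>0$ while simultaneously $H^q(\Omega,\G) \to H^q(\Omega', \G)$ factors through a map with relatively compact image in a Fréchet topology (Montel), forcing finite dimensionality. For coherent $\G$ this is reduced to the case of the structure sheaf by taking a local free resolution $\O^{p_1} \to \O^{p_0} \to \G \to 0$ and running the long exact cohomology sequence: finiteness for $\O$ on strongly pseudoconvex subsets of the local models (Grauert) propagates to finiteness for $\G$. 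One has to be mildly careful that $Z$ is only a complex space, but this is precisely what \cite[Theorem 5.4]{Fornaess} is designed to handle — the $L^2$/$\overline\partial$-estimates and the bumping lemmas are available on reduced complex spaces with strictly plurisubharmonic boundary.

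The main obstacle I anticipate is not conceptual but bookkeeping: making the bumping construction uniform across the (finitely many) singular points of $Z$ lying on or near $\partial\Omega$, and checking that the local finite free resolutions of $\G$ glue compatibly with the bumping so that the long exact sequence argument goes through on all of $\Omega$ at once rather than just locally. Concretely, one wants a single relatively compact strongly pseudoconvex $\Omega' \Subset \Omega$ with $\overline{\Omega'} \subset \Omega$ such that the restriction $H^q(\Omega,\G) \to H^q(\Omega',\G)$ is both surjective (by a bumping/approximation argument) and compact (it is the composite of restriction with a nuclear inclusion of section spaces), whence $H^q(\Omega,\G)$ is a Fréchet space equal to a compact endomorphism of itself on cohomology, hence finite-dimensional by Schwartz's lemma. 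Since the paper only needs this statement as a black box — and explicitly attributes it to Grauert with the cited completions for the complex-space case — I would keep the exposition to roughly the above outline and refer the reader to \cite{GrauertLevi}, \cite{Fornaess}, \cite{GrauertEx} for the analytic details.
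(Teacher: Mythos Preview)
Your proposal is correct and aligns with the paper's treatment: the paper does not prove this proposition at all but simply attributes it to Grauert \cite[Proposition 4]{GrauertLevi}, noting that the extension to coherent sheaves on complex spaces follows using \cite[Theorem 5.4]{Fornaess} and \cite[Hilfssatz 1]{GrauertEx}. Your outline of the bumping/Schwartz-lemma argument is exactly the content behind those citations, and your concluding suggestion to treat the result as a black box with references is precisely what the paper does.
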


From Proposition \ref{CohomologyFinite} and the injectivity of $A_{\leq d_0,}$, we conclude the following result.

\begin{corollary}\label{VanishingCohomology}  
Let $X/G$ be compact, $X$ be strongly pseudoconvex and $\G^Y$ a coherent analytic sheaf on $Y/H^\C$.
Then for every $q >0$, there exists a $d_0$ such that 
\begin{equation*}
H^q(Y/H^\C, \G^Y \otimes \O_{Y,d}^Y) = 0
\end{equation*}
for all $d \geq d_0$.
\end{corollary}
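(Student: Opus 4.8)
The plan is to combine the finiteness statement of Proposition \ref{CohomologyFinite} with the injectivity of the maps $A_{\leq d_0}$ established above, via a pigeonhole argument. The point is that the direct sum over $d$ of the cohomology groups in question injects into a \emph{single} finite-dimensional space, so all but finitely many summands must vanish.

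First I would observe that the whole construction of the maps $A_{\leq d_0}$ and $B_{\leq d_0}$ only uses data in a neighbourhood of the fixed-point set $Y/H^\C \subset Z$: the sheaf $\G^Z$ is defined locally by pullback along $p$, and the maps $B_{d,U}$ are built from power series expansions of $H^\C$-invariant functions transverse to $Y/H^\C$. Hence the construction carries over verbatim with $Z$ replaced by the open subset $Z_\Delta$, provided $Y/H^\C \subset Z_\Delta$, which holds since $|0| < 1$. Moreover, because $p$ is the quotient by $\C^*$ and every point of $Y/H^\C$ occurs as a $\C^*$-fixed point $[0,z] \in Z_\Delta$, we have $p(Z_\Delta) = Y/H^\C$, so $\G^Y$ is still used over the full base. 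Thus for every $d_0$ we obtain an injective map
\begin{gather*}
A_{\leq d_0} \colon \bigoplus_{0 \leq d \leq d_0} H^q(Y/H^\C, \G^Y \otimes \O^Y_{Y,d}) \longrightarrow H^q(Z_\Delta, \G^Z),
\end{gather*}
with $B_{\leq d_0}$ as a left inverse, exactly as before.

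Next I would apply Proposition \ref{CohomologyFinite} to $\Omega = Z_\Delta$. By the definition of $X$ being strongly pseudoconvex, $Z_\Delta$ is strongly pseudoconvex in $Z$, and since $X/G$ is compact the closure $\{|w| \leq 1\} \subset Z = \C \times^{S^1} X/G$ is compact, so $Z_\Delta$ is relatively compact. As $\G^Z$ is coherent and analytic on $Z$, the proposition gives $N := \dim_\C H^q(Z_\Delta, \G^Z) < \infty$ for every $q > 0$. Injectivity of $A_{\leq d_0}$ then forces $\sum_{0 \leq d \leq d_0} \dim_\C H^q(Y/H^\C, \G^Y \otimes \O^Y_{Y,d}) \leq N$ for every $d_0$; letting $d_0 \to \infty$, the whole series $\sum_{d \geq 0} \dim_\C H^q(Y/H^\C, \G^Y \otimes \O^Y_{Y,d})$ is bounded by $N$, hence only finitely many terms are nonzero. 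Choosing $d_0$ larger than the index of the last nonzero term yields $H^q(Y/H^\C, \G^Y \otimes \O^Y_{Y,d}) = 0$ for all $d \geq d_0$.

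I expect the only step needing genuine care to be the first one: checking that the construction of $B_d$ (power series expansion along the $\C^*$-fixed locus, together with the $H^\C$-equivariant extension of the $\overline g_d$) uses only germs near $Y/H^\C$ and therefore localizes to $Z_\Delta$, and that $p(Z_\Delta)$ is indeed all of $Y/H^\C$ so that no part of the base is lost. Once finiteness of $H^q(Z_\Delta, \G^Z)$ is available, the remainder is an immediate counting argument.
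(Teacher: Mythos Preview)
Your proposal is correct and follows exactly the approach the paper intends: combine the injectivity of $A_{\leq d_0}$ with the finite-dimensionality furnished by Proposition \ref{CohomologyFinite}, then use the obvious pigeonhole argument. You have in fact been more careful than the paper, which states the corollary in one line; your observation that the construction of $A_{\leq d_0}$ and $B_{\leq d_0}$ localizes to $Z_\Delta$ (and that $Z_\Delta$ is relatively compact because $X/G$ is compact) is precisely the point that makes the argument go through.
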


Let $V$ be a $H^\C$-representation. We may define a new $H^\C$-representation structure on $V$ via $(h,v) \mapsto h_C^d h v$, which we denote by $V_d$.\\

Note that for an open subset $U$ of $Y$, the $G^\C$-invariant holomorphic functions of order $d>0$ on $U$ are one-to-one with the holomorphic functions of order $d$ on $\C \times^{\C^*} U /G^\C$.

Now let $\pi \colon Y \rightarrow Y/H^\C$ be the projection and $V$ a $H^\C$-representation. We define the sheaf $\F^Y$ on $Y/H^\C$ as 
\begin{gather}\label{SheafDef}
\F^Y (U):= \{ f \colon \pi^{-1}(U) \rightarrow V \, | \, f \textrm{ holomorphic, $H^\C$-equivariant } \}.
\end{gather}
For $z \in Y/H^\C$ we set 
\begin{gather}\label{SheafOne} 
\F^Y_{z,z} (U) := \{ f \in \F^Y (U) \, | \, f(z) = 0 \textrm{ and } df (z) = 0 \}
\end{gather}
and for $z,w \in Y/H^\C$, consider
\begin{gather}\label{SheafTwo}
\F^Y_{z,w} (U) := \{ f \in \F^Y (U) \, | \, f(z) = f(w ) = 0 \}.
\end{gather}
The existence of holomorphic slices (Proposition \ref{sliceLemma}) also implies that every $H^\C$-orbit is analytic.
This shows that for a fixed $y \in Y$, the sheaf $\O^V_{H^\C y}$ of holomorphic maps to $V$ which vanish on $H^\C y$ is a coherent analytic $H^\C$-sheaf (see \cite{Roberts}).

Given a point $x \in Y$, we find a slice $S_C$ which is a Stein manifold.
Since $H^\C$ is complex reductive, we have that $H^\C \times^{H^\C_y} S_C$ is Stein.
Hence $Y/H^\C$ has a neighbourhood basis such that the preimages under the quotient map are Stein. 
We may use \cite[Theorem 3.1]{Roberts} to see that both sheaves above are coherent analytic sheaves on $Y/H^\C$.\\

Let $x \in X$ and $L= H_x$ with $L^0 \subset G$.
Then $L^\C$ acts from the right on $H^\C / G^\C$ as a finite group $N$.
We call the order $|N|$ of the finite group $N$ the \textbf{order of $\C^*$ in $x$}.
The map $h_C \mapsto h_C^{|N|}$ is an $N$-invariant map on $H^\C / G^\C$.

\begin{proposition}\label{LocalImmersionDependent}
Let $y \in X$, $L = H_y$ and $H^\C \times^{L^\C} S_C$ be a holomorphic slice in $Y$. Let $l$ be  the order of $\C^*$ in $x$. Then there exists a $H^\C$-representation $V $ and a $H^\C$-equivariant holomorphic injective immersion 
$\varphi \colon H^\C \times^{L^\C}S_C \rightarrow V$ such that, for every $m \in \mathbb{N}$, the map 
\begin{gather*}
\Phi \colon H^\C \times^{L^\C} S_C \rightarrow V_{ml} \times V_{(m+1)l} \\
[h,z] \mapsto (h_C^{ml} \varphi[h,z], h_C^{(m+1)l} \varphi[h,z])
\end{gather*}
is a well-defined, injective equivariant immersion on some open, $H^\C$-invariant neighbourhood $U_m$ of $y$ in $Y$.

For all $y_0 \in U_m$, the restricted map $\Phi \colon G^\C y_0 \rightarrow V_{ml} \times V_{(m+1)l}$ is an embedding.
\end{proposition}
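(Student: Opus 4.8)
The plan is to construct a single $H^\C$-equivariant holomorphic injective immersion $\varphi$ of the slice extension $H^\C\times^{L^\C}S_C$ into a representation $V$, and then to deduce every assertion about $\Phi$ from it; the construction of $\varphi$ is the only genuinely non-formal step, and I expect it to be the main obstacle.

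To obtain $\varphi$, recall from Proposition~\ref{sliceLemma} (and the discussion preceding Theorem~\ref{QuotientComplex}) that $S_C$ may be taken to be an $L^\C$-invariant open subset of an $L^\C$-representation $W$, so that $H^\C\times^{L^\C}S_C$ is an $H^\C$-invariant open subset of $H^\C\times^{L^\C}W$. Since $L^\C$ is complex reductive, $H^\C/L^\C$ is affine by \cite{Matsushima}, hence $H^\C\times^{L^\C}W$ is an affine algebraic $H^\C$-variety; choosing a finite-dimensional $H^\C$-submodule of its coordinate ring containing a set of algebra generators yields an $H^\C$-equivariant closed embedding $H^\C\times^{L^\C}W\hookrightarrow V$, whose restriction to the open subset $H^\C\times^{L^\C}S_C$ is the desired $\varphi$, in particular a homeomorphism of $H^\C\times^{L^\C}S_C$ onto a locally closed analytic subset of $V$. (Alternatively one may quote the equivariant embedding theorem for Stein spaces with reductive group action.)

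Well-definedness and equivariance of $\Phi$ are then formal. Writing $l$ for the order of the finite group $L^\C/(L^\C\cap G^\C)$ through which $L^\C$ acts on $H^\C/G^\C\cong\C^*$, one has $\ell_C^{\,l}=1$, hence $\ell_C^{\,ml}=\ell_C^{\,(m+1)l}=1$, for all $\ell\in L^\C$; together with the $L^\C$-invariance of $[h,z]\mapsto\varphi[h,z]$ this makes $\Phi$ independent of the chosen representative on all of $H^\C\times^{L^\C}S_C$, and equivariance follows at once from $(gh)_C=g_Ch_C$ and equivariance of $\varphi$. For injectivity: from the two components of $\Phi[h,z]$, namely $A:=h_C^{ml}\varphi[h,z]$ and $h_C^{(m+1)l}\varphi[h,z]=h_C^{\,l}A$, one recovers $h_C^{\,l}$ on the open set where $\varphi\neq 0$ and hence $\varphi[h,z]=(h_C^{\,l})^{-m}A$ itself, while on the zero set of $\varphi$ injectivity of $\varphi$ already suffices; either way $\Phi[h,z]=\Phi[h',z']$ forces $[h,z]=[h',z']$. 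For the immersion property, reduce by equivariance to a point $p=[1,z_0]$: a short computation with a curve representing a tangent vector $\xi$ gives $d\Phi_1(\xi)=ml\,a\,\varphi(p)+d\varphi(\xi)$ and $d\Phi_2(\xi)=(m+1)l\,a\,\varphi(p)+d\varphi(\xi)$, where $a$ is the derivative of $h\mapsto h_C$ along $\xi$, so $d\Phi(\xi)=0$ yields $l\,a\,\varphi(p)=0$ and therefore $d\varphi(\xi)=0$, giving $\xi=0$ because $\varphi$ is an immersion. Thus $\Phi$ is a well-defined injective equivariant immersion on all of $H^\C\times^{L^\C}S_C$, and one may take $U_m$ to be this whole set.

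It remains to treat the restriction to $G^\C$-orbits. Fix $y_0\in U_m$ and write $y_0=[h_0,s_0]$. Since $g_C=1$ for $g\in G^\C$, equivariance of $\varphi$ gives $\Phi(g y_0)=\big((h_0)_C^{ml}\varphi(g y_0),\,(h_0)_C^{(m+1)l}\varphi(g y_0)\big)$ for all $g\in G^\C$, so $\Phi|_{G^\C y_0}$ factors as $\varphi|_{G^\C y_0}$ followed by the injective linear map $w\mapsto\big((h_0)_C^{ml}w,(h_0)_C^{(m+1)l}w\big)$. Now $\varphi(G^\C y_0)=G^\C\varphi(y_0)$ is an orbit of the reductive group $G^\C$ inside the affine variety $\varphi(H^\C\times^{L^\C}W)$, hence locally closed in $V$; as $\varphi$ is a homeomorphism onto a locally closed subset of $V$, it follows that $\varphi|_{G^\C y_0}$ is an embedding, and composing with the injective linear map above shows $\Phi|_{G^\C y_0}$ is an embedding onto a locally closed submanifold of $V_{ml}\times V_{(m+1)l}$.
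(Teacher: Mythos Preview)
Your construction of $\varphi$ and your arguments for well-definedness, injectivity, and the immersion property of $\Phi$ are correct and essentially the same as the paper's (in fact your injectivity and immersion arguments are tidier, and you correctly observe that they work on the whole slice extension rather than just a neighbourhood of $y$).

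There is, however, a genuine gap in the final step. The paper's convention is that \emph{all embeddings are closed}, and this closedness of $\Phi|_{G^\C y_0}$ is exactly what is used downstream in Lemma~\ref{LocalProper} (to conclude that a convergent sequence $\Phi(h_n y)$ forces convergence of $h_n y$). You only establish that $\varphi(G^\C y_0)$ is locally closed in $V$, and you say so explicitly. Local closedness of an algebraic orbit is automatic for any algebraic group; mentioning that $G^\C$ is reductive does not strengthen this to closedness.

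The paper closes this gap by the following observation, which you do not make: since all $H$-isotropy groups on $X$ have the same identity component, $L^\C$ acts on $S_C$ through the finite group $L/L^0$, so every $L^\C$-orbit in $S_C\subset W$ is finite and hence closed in $W$; consequently every $H^\C$-orbit through a point of $H^\C\times^{L^\C}S_C$ is closed in the affine variety $H^\C\times^{L^\C}W$, and therefore $\varphi$ restricts to a \emph{closed} embedding on each $H^\C$-orbit. One then notes that $G^\C y_0$ is closed in $H^\C y_0$ (it is a fibre of $h y_0\mapsto h_C^{\,l}$), so $\varphi|_{G^\C y_0}$, and hence $\Phi|_{G^\C y_0}=(c_1\varphi,c_2\varphi)$, is a closed embedding into $V_{ml}\times V_{(m+1)l}$. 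Adding this argument fixes your proof.
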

\begin{proof}

We May assume that $S_C \subset T_y Y$ and $H^\C \times^{L^\C} S_C \subset H^\C \times^{L^\C} T_y Y$. 
The quotient $H^\C \times^{L^\C} T_y Y$ is an affine variety with an action of the complex reductive group $H^\C$, therefore we find an equivariant holomorphic embedding into some representation $V$.

Let $\varphi \colon H^\C \times^{L^\C} S_C \rightarrow V$ be the restriction of that embedding.
The $L^\C$-orbits in $S_C$ are all finite. We conclude that all $H^\C$-orbits in $H^\C \times^{L^\C} S_C$ are also closed in $H^\C \times^{L^\C} T_y Y$, hence $\varphi$ is an embedding on every $H^\C$-orbit.\\

Let $\Phi$ be defined as in the statement of the Proposition. 
Using that $\varphi$ is an equivariant immersion and therefore non-vanishing, a direct computation shows that $\Phi$ is injective.

Now consider $ \tilde{z} :=[1,z]$, take $\xi \in\C = T_1 \C^*$, $\nu \in \G^\C$ and $v \in T_{\tilde{z}} S_C$.\\
Define $:w =\xi (\tilde{z}) + \nu ( \tilde{z}) + v$, then 
\begin{equation*}
\begin{split}
&d_{\tilde{z}} \Phi (w) = (l m \cdot  \xi \cdot \varphi (\tilde{z}) + d_{\tilde{z}} \varphi (w ), \, l(m+1) \cdot \xi \cdot \varphi (\tilde{z}) + d_{\tilde{z}} \varphi (w ))
\end{split}
\end{equation*}
and because $\varphi(\tilde{z}) \neq 0$ and $\varphi$ is an immersion, one sees that $\Phi$ is an immersion in $\tilde{z}$.
Because $\Phi$ is equivariant, it is an immersion in some invariant neighbourhood $U_m$ of $H^\C y$.

For fixed $y_0 \in U_m$, we get $\Phi|_{G^\C y_0 } = (c_1 \varphi, c_2\varphi)$ for some constants $c_1$ and $c_2$, which is an embedding.
\end{proof}

For fixed $y \in X$ with order $l$ of $\C^*$ in $y$, define $f_m \colon H^\C \times^{L^\C} S_C \rightarrow \C$, $[h,x] \mapsto h_C^{ml}$.
Let $\varphi$ be the map as in Proposition \ref{LocalImmersionDependent}, then $\varphi \otimes f_m$ defines an element in $\F^Y \otimes \O^Y_{Y,ml}$ (see (\ref{SheafDef})). 
Applying Corollary \ref{VanishingCohomology} to the sheaf (\ref{SheafOne}) and choosing $m$ large enough, we find a $H^\C$-representation $V_y$, and a  $H^\C$-equivariant holomorphic map $\Phi_y \colon Y \rightarrow V_y$ such that $\Phi_y = (f_m \varphi, f_{m+1} \varphi)$ on $H^\C y$ up to order $2$.
We therefore find a neighbourhood $U_y$ of $y$, such that the map $\Phi_y \colon U_y \rightarrow V_y$ is an immersion on $U_y$, injective on $H^\C y$ and $\Phi_y \colon G^\C y \rightarrow V_y$ is an embedding.\\

Using the same argument for the trivial $H^\C$-representation $\C$ and $\varphi$ being a constant map, we may construct a $G^\C$-invariant holomorphic map $F \colon Y \rightarrow \C^m$ which does not vanish on $Y$ and every component $F_i$ of $F$ is of order $d_i >0$.
Because $X/G$ is compact, we obtain inf$_{x \in X} ||F(x)||^2 = c > 0$.

\begin{lemma}\label{LocalProper}
Let $\Phi_y$, $F$ be as above.
Then there exist open, $H^\C$-invariant neighbourhoods $U_y$ of $y$ in $Y$ and $W_y$ of $(F \times \Phi_y ) (y)$ in $\C^m \backslash \{0\} \times V_y$ such that $(F \times \Phi_y) \colon U_y \rightarrow W_y$ is an embedding. 
Furthermore, the set $W_y$ is saturated with respect to the quotient map $\C^m \backslash \{0\} \times V_y \rightarrow (\C^m \backslash \{0\} \times V_y) \git H^\C$ and two distinct $H^\C$-orbits in $(F \times \Phi_y) (U_y)$ may be separated by disjoint, $H^\C$-invariant neighbourhoods.
\end{lemma}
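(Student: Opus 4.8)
The plan is to produce the neighbourhoods by working downstairs on the quotient $Y/H^\C$ first, then pulling back. By Proposition \ref{LocalImmersionDependent} and the construction preceding this lemma, the map $\Phi_y \colon G^\C y \to V_y$ is an embedding and $\Phi_y$ is an immersion on a neighbourhood of $y$; moreover $F$ is $G^\C$-invariant, non-vanishing, with each component of positive order, so $F$ descends to a non-vanishing holomorphic map on $Z = \C \times^{S^1}(X/G)$ (equivalently on $\C \times^{\C^*}(Y/G^\C)$), and its restriction to the $\C^*$-orbit through the image of $y$ is a proper embedding (a non-vanishing tuple of positive-weight monomials in $w \in \C^*$ is proper onto its image in $\C^m \setminus \{0\}$). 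The first step is therefore to check that $F \times \Phi_y$ separates the $H^\C$-orbit $H^\C y$ from nearby $H^\C$-orbits: since $\Phi_y$ separates $G^\C$-orbits near $y$ and $F$ separates the $\C^*$-directions, and $H^\C = G^\C \rtimes \C^*$, the product separates $H^\C$-orbits on a neighbourhood. I would make this precise using the holomorphic slice $H^\C \times^{L^\C} S_C$ from Proposition \ref{sliceLemma}: on the slice, $\Phi_y$ restricted to $S_C$ is injective (being an immersion on the finite-quotient $S_C$ after possibly shrinking), and combining with the $F$-coordinate in the $\C^*$-variable gives injectivity on $H^\C$-orbit space of a slice neighbourhood.

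Next I would produce the saturated target neighbourhood $W_y$. The key tool is the Luna-type slice theorem for the complex reductive group $H^\C$ acting on $\C^m \setminus \{0\} \times V_y$: near the (closed, since its isotropy is the finite group acting on $S_C$) orbit $H^\C \cdot (F\times\Phi_y)(y)$, there is a saturated, $H^\C$-invariant open neighbourhood admitting an equivariant slice, and the GIT quotient map $\C^m\setminus\{0\}\times V_y \to (\C^m\setminus\{0\}\times V_y)\git H^\C$ is nice on it. Concretely: choose a saturated $H^\C$-invariant Stein neighbourhood $W_y$ of the image orbit in $\C^m\setminus\{0\}\times V_y$ (saturated neighbourhoods of a closed orbit form a basis in the affine/GIT setting), set $U_y := (F\times\Phi_y)^{-1}(W_y)$, and shrink $W_y$ so that $(F\times\Phi_y)\colon U_y \to W_y$ is injective and an immersion — this is possible by the orbit-separation from the previous paragraph together with properness of $F$ along $\C^*$-orbits, which prevents escape of orbits to the boundary. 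Closedness of the image then follows because $W_y$ is saturated and $(F\times\Phi_y)$ maps $H^\C$-orbits to $H^\C$-orbits properly.

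Finally, the separation of two distinct $H^\C$-orbits in $(F\times\Phi_y)(U_y)$ by disjoint $H^\C$-invariant neighbourhoods is immediate once $W_y$ is saturated: distinct orbits there have disjoint closures (the GIT quotient is Hausdorff and separates closed orbits), and taking saturated preimages of disjoint neighbourhoods in the quotient gives the required $H^\C$-invariant separating open sets.

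The main obstacle I anticipate is verifying that $(F\times\Phi_y)$ is genuinely proper — i.e. that $U_y$ can be chosen so that no sequence of $H^\C$-orbits in $U_y$ limits onto the boundary of $W_y$. This is where the positivity of the orders of the components of $F$ (and of the $\C^*$-weights built into $\Phi_y$ via the factors $h_C^{ml}$, $h_C^{(m+1)l}$) is essential: it forces the $\C^*$-direction of the map to be proper onto $\C^m\setminus\{0\}\times V_y$, so that the only way to leave $W_y$ is in the $G^\C$-slice directions, which are controlled because $\Phi_y$ restricted to the (finite-quotient) slice $S_C$ is a closed immersion after shrinking. Carefully combining these two properness statements — one in the fibre direction over $Y/H^\C$, one transverse — to conclude global properness over the saturated set $W_y$ is the technical heart of the argument.
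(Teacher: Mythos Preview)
Your overall strategy---use a slice for $H^\C$ on the target side, pull back a saturated neighbourhood, and separate orbits via the GIT quotient---is the same as the paper's.  But two of the steps you treat as routine are exactly where the work lies, and as written they are gaps.

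First, you invoke a Luna-type slice and saturated neighbourhood basis directly on $\C^m\setminus\{0\}\times V_y$.  That space is not affine, so the standard Luna/GIT package does not apply to it as stated.  The paper deals with this by observing that the $\C^*$-action on $\C^m\setminus\{0\}$ (with all weights positive) is proper and locally free, taking a $\C^*$-slice $S$ through $F(y)$ so that $\Omega=\C^*\times^\Gamma S$ is Stein, and then using Heinzner's embedding theorem to realise $\Omega\times V_y$ equivariantly as a closed subvariety of an honest $H^\C$-representation $V_0$.  Only after this reduction does the slice machinery (in the form of Snow's construction) produce a saturated neighbourhood $W_0=H^\C\cdot S_0$ and a matching slice neighbourhood $U_y=H^\C\cdot S_Y$ in $Y$ on which $F\times\Phi_y$ is an embedding.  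Note in particular that $U_y$ is built as a slice neighbourhood in $Y$, not as the preimage $(F\times\Phi_y)^{-1}(W_y)$; the latter could a~priori contain extra components on which injectivity fails.

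Second, your justification that the image orbit $H^\C\cdot(F\times\Phi_y)(y)$ is closed---``since its isotropy is finite''---is not valid: finite isotropy does not force closedness, especially on a non-affine target.  The paper proves closedness by a direct properness computation on the orbit: if $(F\times\Phi_y)(w_n h_n y)$ converges in $\C^m\setminus\{0\}\times V_y$ with $w_n\in\C^*$, $h_n\in G^\C$, then $F(w_n h_n y)=\bigoplus_i w_n^{d_i}F_i(y)$ converges to a nonzero vector, so the positive weights $d_i$ force $|w_n|$ to be bounded away from $0$ and $\infty$; passing to a subsequence $w_n\to w$, one then uses that $\Phi_y$ is already known to embed $G^\C y$ to conclude $h_n y$ converges.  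This is precisely the properness you flag as the ``main obstacle'', and it must be established \emph{before} any slice theorem is invoked, since the slice construction requires the orbit to be closed.
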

\begin{proof}
We will omit the subscript and write $\Phi = \Phi_y$.
We begin by showing that the map $(F \times \Phi) \colon H^\C y \rightarrow \C^m \backslash \{0\} \times V$ is an embedding.

Let $w_n \in \C^*$, $h_n \in G^\C$ such that $(F \times \Phi)(w_n h_n y)$ converges in $\C^m \backslash \{0\} \times V$. 
We get $F(w_n h_n y) = F(w_n y) = \bigoplus w_n^{d_i} F_i (y)$, which converges against a non-zero value.
There exists an $i$ such that $F_i (y) \neq 0$, hence $w_n$ is bounded from below and above, therefore we may assume that it converges in $\C^*$.

But then $w_n^{-1} \Phi(w_n h_n y) = \Phi ( h_n y)$ converges and since $\Phi$ is an embedding on $G^\C y$, we have proved the claim. \\

We get that the $\C^*$-action on $\C^m \backslash \{0\}$ induced by the orders of the $F_i$ is proper and locally free, therefore we find a complex slice $S$ around $F(y)$ which is biholomorphic to some complex ball.
Let $\Gamma = \C^*_{F(y)}$, then $\Omega := \C^* \times^\Gamma S$ is an open, $\C^*$-invariant Stein neighbourhood of $F(y)$ in $\C^m \backslash \{0\}$ and the $H^\C$-orbit through $(F \times \Phi)(y)$ is closed in $\Omega \times V$.

Using \cite[Einbettungssatz 1]{HeinznerEmbedding}, we find a closed $H^\C$-equivariant embedding $\mu$ of $\Omega \times V$ into some $H^\C$-representation $V_0$.
Now $\mu \circ (F \times \Phi)$, defined on $(F \times \Phi)^{-1}(\Omega \times V)$, is an immersion in $y$ and an embedding on the $H^\C$-orbit through $y$.

From the proof of Proposition 5.1 in \cite{Snow}, one finds a slice $S_0$ through $\mu \circ (F \times \Phi) (y)$ in $V_0$ and a slice $S_Y$ through $y$ in $Y$ such that the maps $\mu \circ (F , \Phi) \colon S_Y \rightarrow S_0$ and also $\mu \circ (F \times \Phi) \colon H^\C \times^{L^\C} S_Y \rightarrow H^\C \times^{L^\C} S_0$ are embeddings. Additionally, $H^\C \cdot S_0$ is saturated with respect to the quotient map.

Define $W_0 = H^\C \cdot S_0$, $W := \mu^{-1}(W_0)$ and $U := H^\C \cdot S_Y$.
Then $W$ is saturated with respect to the quotient map and $(F \times \Phi) \colon U \rightarrow W$ is an embedding. 

Now every $H^\C$-orbit in $U$ is closed, hence every $H^\C$-orbit in $\mu \circ (F \times \Phi) (U)$ is closed in $W_0$. We may separate two distinct, closed orbits in $W_0$ by $H^\C$-invariant open neighbourhoods and the result follows.
\end{proof}

Let us shortly summarize the assumptions we made to this point. 
We have that $H$ is a subgroup of its universal complexification $H^\C$, $G$ is a closed, normal subgroup of $H$ such that $H = G \rtimes S^1$.
Furthermore, we assume $H^\C$ to be complex reductive. 
Now let $X$ be a CR manifold with proper, transversal CR action of $H$ such that $H_x^0 \subset G_x^0$ for every $x \in X$ and $Y$ the universal equivariant extension of $X$. 
Our main result is the following.

\begin{theorem}\label{PseudoconvexEmbedding}
Let $X$ be as above and assume that $X/G$ is compact.\\
Then there exists a $H^\C$-representation $V$ and a $H^\C$-equivariant holomorphic embedding $\Phi \colon Y \rightarrow \C^m \backslash \{0\} \times V$, such that $\Phi|_X \colon X \rightarrow \C^m \times V$ is a CR embedding. 
Here, $\C^m$ is the trivial $G^\C$-representation and decomposes into irreducible $\C^*$-representations with positive weights.
\end{theorem}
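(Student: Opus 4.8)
The strategy is to produce $\Phi$ as a finite product of $H^\C$-equivariant holomorphic maps into representations, globalising the local embeddings of Lemma \ref{LocalProper} with the help of the twisted vanishing theorem Corollary \ref{VanishingCohomology}. The point that makes everything finite is that $Y/H^\C = X/H = (X/G)/S^1$ is compact, and this is also precisely the hypothesis under which Corollary \ref{VanishingCohomology} applies. Recall also the $G^\C$-invariant map $F\colon Y\to\C^m$ constructed above: it is nowhere zero, each component $F_i$ is of positive order $d_i$, and $\inf_{x\in X}\|F(x)\|^2 = c>0$. Hence $F$ already takes values in $\C^m\setminus\{0\}$, the space $\C^m = \bigoplus_i\C_{d_i}$ is the trivial $G^\C$-representation and splits into irreducible $\C^*$-representations of positive weight, and $F(X)$ stays in the region $\{\,\|\cdot\|\ge\sqrt c\,\}$, bounded away from $\{0\}\times V$.

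First I would cover the compact space $Y/H^\C$ by finitely many of the $H^\C$-invariant sets $U_{y_1},\dots,U_{y_N}$ of Lemma \ref{LocalProper} with $y_i\in X$. On each $U_{y_i}$ the map $F\times\Phi_{y_i}\colon U_{y_i}\to W_{y_i}\subset\C^m\setminus\{0\}\times V_{y_i}$ is a closed embedding onto a set saturated for the quotient $(\C^m\setminus\{0\}\times V_{y_i})\git H^\C$, inside which distinct $H^\C$-orbits are separated by invariant neighbourhoods. Put $\Psi := F\times\Phi_{y_1}\times\dots\times\Phi_{y_N}$, an $H^\C$-equivariant holomorphic map into $\C^m\setminus\{0\}\times(V_{y_1}\oplus\dots\oplus V_{y_N})$. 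Since every point of $Y$ lies in some $U_{y_i}$, on which already $F\times\Phi_{y_i}$ is an immersion and is injective on the orbit through that point, $\Psi$ is an immersion on all of $Y$ and is injective on every $H^\C$-orbit.

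It remains to separate distinct orbits. Let $B\subset (Y/H^\C)\times(Y/H^\C)$ be the set of pairs $(p,q)$, $p\ne q$, that do not lie together in any $U_{y_i}$; since the $U_{y_i}$ cover, $B$ is closed and disjoint from a neighbourhood of the diagonal, hence compact. Fixing $(p,q)\in B$ and points $x,x'\in X$ over $p,q$, the quotient sheaf $\F^Y/\F^Y_{x,x'}$ (see (\ref{SheafTwo})) is supported on the two orbits, so the long exact cohomology sequence together with Corollary \ref{VanishingCohomology} applied to $\F^Y_{x,x'}\otimes\O^Y_{Y,d}$ for $d$ large gives an $H^\C$-equivariant holomorphic map $Y\to V'$ whose value separates $H^\C x$ from $H^\C x'$; by compactness of $B$ finitely many such maps $\Psi_1,\dots,\Psi_r$ suffice. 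Set $\Phi := \Psi\times\Psi_1\times\dots\times\Psi_r\colon Y\to\C^m\setminus\{0\}\times V$. Then $\Phi$ is injective: if $\Phi(y)=\Phi(y')$, equivariance forces the two orbits to have the same image; if $y,y'$ lie in a common $U_{y_i}$ the embedding $F\times\Phi_{y_i}$ forces $H^\C y = H^\C y'$ and then injectivity on the orbit gives $y=y'$, and otherwise the pair of images in $Y/H^\C$ lies in $B$ and is separated by some $\Psi_j$, a contradiction. For properness, if $K$ is compact the image of $\Phi^{-1}(K)$ in the compact space $Y/H^\C$ has compact closure, hence is covered by finitely many $U_{y_i}$, and on each $U_{y_i}$ the restriction of $\Phi$ factors through the closed embedding $F\times\Phi_{y_i}$ onto the saturated $W_{y_i}$, so $\Phi^{-1}(K)\cap U_{y_i}$ is homeomorphic to a closed subset of $K$ and thus compact; hence $\Phi^{-1}(K)$ is compact. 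An injective proper immersion is a closed embedding.

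For the restriction: $X$ is a closed CR submanifold of $Y$ (Theorem \ref{GlobalExtension}) and $\Phi$ is a biholomorphism onto the closed complex submanifold $\Phi(Y)\subset\C^m\setminus\{0\}\times V$, so $\Phi(X)$ is a CR submanifold of $\C^m\setminus\{0\}\times V$; since $F(X)$ is bounded away from $0$, $\Phi(X)$ is in fact closed in $\C^m\times V$, and therefore $\Phi|_X\colon X\to\C^m\times V$ is a CR embedding. The main obstacle is the global equivariant separation of orbits in the third step: it forces one to combine the compactness of $Y/H^\C$ with the twisted cohomology vanishing of Corollary \ref{VanishingCohomology}, and requires checking that the coherent sheaves $\F^Y_{x,x}$ and $\F^Y_{x,x'}$ genuinely encode the immersion, orbit-injectivity and orbit-separation conditions and that the bad set $B$ is truly compact, so that finitely many corrections suffice.
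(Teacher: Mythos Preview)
Your argument follows the paper's proof essentially step for step: cover $Y$ by the $U_{y_i}$ from Lemma~\ref{LocalProper}, form $\Psi=F\times\bigoplus\Phi_{y_i}$ to get an $H^\C$-equivariant immersion injective on each orbit, then use Corollary~\ref{VanishingCohomology} on the sheaves $\F^Y_{x,x'}$ and compactness of the ``bad set'' $B\subset (Y/H^\C)^2$ to add finitely many further components separating the remaining pairs of orbits. This is exactly the paper's scheme.

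The one place that needs tightening is properness. Your claim that $\Phi^{-1}(K)\cap U_{y_i}$ is homeomorphic to a \emph{closed} subset of $K$ is not justified: $\Phi(U_{y_i})$ is closed only in the open set $W_{y_i}\times(\text{rest})$, so $K\cap\Phi(U_{y_i})$ is a priori merely closed in the open subset $K\cap(W_{y_i}\times(\text{rest}))$ of $K$, which need not be compact. The paper instead argues via sequences and actually uses the saturation of $W_{y_i}$: if $\tilde\Phi(a_n)\to b$ in $\tilde V_0$, compactness of $Y/H^\C$ lets one assume $\pi(a_n)\to\pi(a)$ with $a\in U_y$; continuity of the induced map $U_y/H^\C\to p(W_y)\subset \tilde V_0\git H^\C$ shows that the image of $b$ in $\tilde V_0\git H^\C$ agrees with that of $(F\times\Phi_y)(a)\in W_y$, and saturation then forces the projection of $b$ to lie in $W_y$, after which the closed embedding $(F\times\Phi_y)\colon U_y\to W_y$ yields convergence of $a_n$. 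Once you insert this use of saturation, your proof is complete and matches the paper's.
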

\begin{proof}
Let $F$, $\Phi_y$, $U_y \subset \Omega_y^\C$ and $W_y$ be as in Lemma \ref{LocalProper}.

Using that $X/G$ is compact, we may cover $Y$ be finitely many open subsets $U_y$ with $y \in I$ and consider
\begin{gather*}
\tilde{ \Phi} := (F, \bigoplus_{y \in I} \Phi_y) \colon Y \rightarrow \C^m \times \bigoplus_{y \in I} V_y,
\end{gather*}
which is an immersion on $Y$ and injective on every $H^\C$-orbit. \\

We write $\tilde{V} := \C^m \times \bigoplus_y V_y$, $\tilde{V}_0 := (\C^m \backslash \{0\}) \times  \bigoplus_y V_y$ and show that the map $\tilde{\Phi} \colon Y \rightarrow \tilde{V}_0$ is proper.

Let $a_n$ be a sequence in $Y$ such that $\tilde{\Phi} (a_n) = b_n$ converges against $b \in \tilde{V}_0$.
Because $Y/ H^\C = X/H$ is compact, we may assume that $\pi (a_n) \rightarrow \pi(a)$ converges, where we write $\pi \colon Y \rightarrow Y/H^\C$ for the quotient map.

Around $a$, we find a neighbourhood $U_y$ and an open neighbourhood $\tilde{W}_y$ of $\tilde{\Phi}(a)$, such that $\tilde{\Phi} \colon U_y \rightarrow \tilde{W}_y$ is an embedding and $\tilde{W}_y$ defines an open neighbourhood of $b$ in $\tilde{V}_0 \git H^\C$.
This implies the convergence of a subsequence of $a_n$.
Since $\textrm{inf}_{x \in X} || F(x)||^2 = c > 0$, we conclude that $\tilde \Phi \colon X \rightarrow \tilde V$ is proper.\\

Since $\tilde \Phi$ is injective on every $H^\C$-orbit it only remains to show that we may separate points on different orbits.

Lemma \ref{LocalProper} states that for every $w,z \in U_y$ in different orbits, we may separate $\tilde{\Phi}(w)$ and $\tilde{\Phi}(z)$ by $H^\C$-invariant open sets. 
For two distinct points $z,w \in Y$ in different $H^\C$-orbits, we may separate $z$ and $w$ by $H^\C$-invariant sets of the form $\Omega_z = H^\C \times^{H^\C_z} S_C^z$ and $\Omega_w = H^\C \times^{H^\C_w} S_C^w$.
For every $d$ that is a multiple of the orders of $\C^*$ in $z$ and $w$, the map $f_d[h,z] \mapsto h_C^d$ is well-defined on $\Omega_z \cup \Omega_w$. 
Let $\varphi \colon \Omega_z \cup \Omega_w \rightarrow \C^2$ be equal to $(1,0)$ on $\Omega_z$ and equal to $(0,1)$ on $\Omega_w$.
Note that $\varphi$ is equivariant with respect to the trivial $H^\C$-representation.

Applying Corollary \ref{VanishingCohomology} on the sheaf (\ref{SheafTwo}) for the map $f_d \cdot \varphi$ and sufficiently large $d$, we find a $G^\C$-invariant map $F_{z,w} \colon Y \rightarrow \C^2$ of order $d > 0$ such that $F(z)$ and $F(w)$ may be separated by open, $H^\C$-invariant sets.
 
Because $(Y/H^\C \times Y/H^\C ) / \bigcup_{y \in I} (U_y/H^\C \times U_y/H^\C)$ is compact, we conclude that we may use this argument finitely many times and the result follows.
\end{proof}

\section{Line Bundles}\label{ProjectiveEmbedding}

We apply the methods used in the previous sections for a proof of an equivariant embedding theorem similar to the Kodaira embedding theorem.\\ 

Let $X$ be a compact CR manifold and $K$ a compact Lie group with transversal CR action on $X$.

We will use the definitions from \cite{HsiaoMarinescu} for rigid, positive CR line bundles, generalised to arbitrary compact groups.

Let $X$ be a CR manifold with transversal $K$-action. We say that a CR line bundle $L \rightarrow X$ is \textbf{$K$-invariant} if there exists a cover $U_i$ of $X$ over which $L$ is trivial such that each $U_i$ is $K$-invariant and the transition functions $g_{ij}$ are $K$-invariant and CR.
The transition functions can be viewed as holomorphic functions on $X/K$ and give rise to a holomorphic line bundle $L_K \rightarrow X/K$.
On the other hand, every holomorphic line bundle $L_K \rightarrow X/K$ also induces a CR line bundle $L \rightarrow X$.
We say that $L$ is positive if $L_K$ is positive.

Note that if $\Omega$ is a slice-neighbourhood of $x$ in $X$ and $f \colon \Omega \rightarrow \R$ is a $K$-invariant smooth map, then $f$ extends to a $K^\C$-invariant map $F$ on $\Omega^\C$. 
Assume now that the hermitian form $(V,W) \mapsto i \partial \overline{\partial} F(V,\overline{W})$ is positive for $V,W \in T^{1,0}_x X$.  	
We find a slice $S_C$ through $x$ in $\Omega^\C$ such that $\C T_x S_C = T^{1,0}_x X \oplus T^{0,1}_x X$, hence $F|_{S_C}$ is strictly plurisubharmonic after possibly shrinking $S_C$.
Since $(K_x)^\C$ acts as a finite group on $S_C$, we conclude that $F|_{S_C}$ defines a strictly plurisubharmonic function on $S_C / (K_x)^\C$, using the same argument as at the end of Theorem \ref{OrbifoldBundle}.
From \cite[Proposition 6.1]{MarinescuMorse}, we then see that if $L$ is a positive, invariant CR line bundle in the sense of \cite{HsiaoMarinescu}, then it is also positive using the definition given above.\\

Note that every negative line bundle with metric $h$ is weakly negative.
This means that the set $\{ z \in L \, | \, h(z) < 1 \}$ is strongly pseudoconvex, see \cite[§3 Satz 1]{GrauertEx}. 

Now let $L \rightarrow X$ be a weakly negative line bundle and $X_S$ the $S^1$-bundle of $L$, which is a strongly pseudoconvex CR manifold. 
Then $\C \times^{S^1} X_S \cong L$ and $\C \times^{S^1} X_S/K \cong L_K$.

If $V$ is a $K^\C$-representation and $\varphi \colon L \rightarrow V$ a CR map of order $d$, then we may define a section $X \rightarrow L^{-d} \otimes V$ by setting $s_\varphi (x) := 1 \otimes \varphi(1,x)$ in a trivialisation of $L$.
A direct computation shows that this does indeed define a global section.

Now let $x \in X$, $\varphi \colon U \rightarrow V$ an equivariant inkective immerion in some neighbourhood $U$ into a $K^\C$-representation $V$ (see proof of theorem \ref{LocalImmersionDependent}).
Denote by $p \colon L \rightarrow X$ the bundle map.
After shrinking $U$, we may assume that $L$ is trivial over $U$ and define the map $f_d \colon p^{-1}(U) \rightarrow \C$, $(w,x) \mapsto w^d$.
Applying Corollary \ref{VanishingCohomology} to the sheaf (\ref{SheafOne}) for the point $(1,[x]) \in L_K$, the map $f_d \cdot (\varphi \circ p)$ and large $d$ gives an equivariant map $\Phi \colon L \rightarrow V_d$ with $\Phi = f_d \cdot ( \varphi \circ p)$ of order $2$ on $(S^1 \times K) \cdot (1,x) \in L$.

The map $\Phi$ induces a section $\varphi_0 \colon X \rightarrow L^{-d} \otimes V$.
Indentifying $L^{-d} \otimes V$ with $V$ on $U$ using $c \otimes v \mapsto cv$, we get $\varphi_0 (x) = \Phi(1,x) = f(1,x) \cdot (\varphi \circ p) (1,x) = \varphi(x)$.
Let $v \in T_x U$, then $d_x \varphi_0 (v) = d_{(1,x)} \Phi (0,v) = d_{(1,x)} (f \cdot  (\varphi \circ p)) (0,v) = d_x \varphi (v)$.

Denote by $\L^k (C)$ the CR sections $X \rightarrow L^{-d}$ and by $\L^k_x(X)$ and $\L^k_{x,y} (X)$ the sections vanishing in $x$ of order $2$ and the sections vanishing in $x$ and $y$, respectively.
For $x \in X$, we may use the argument above and choose a basis for $V$ to obtain sections $s_1,...,s_m \in \L^d(X)$ such that the $d_x s_i$ form a generating system for $(T_x X)^*$.
Using the same argument as above for $\varphi$ being a constant map, we also find a section $s \in \L^d(X)$ with $s(x) \neq 0$ and $d s (x) = 0$.
Using an analogous argument for Corollary \ref{VanishingCohomology} and sheaf (\ref{SheafTwo}), we also conclude that given two points $x,y \in X$ in different $K$-orbits, we find a section $s \in \L^{d_{x,y}} (X)$ such that $s(x) = 0$ and $s(y) \neq 0$. 

We have therefore shown that for every $x \in X$, there exists a $k$ such that the sequence 
\begin{gather}\label{SequenceOne}
0 \rightarrow \L^k_x(X) \rightarrow \L^k(X) \rightarrow (\L^k / \L^k_x)(X) \rightarrow 0
\end{gather}
is exact. Also, given $x,y \in X$, there exists a $k$ such that the sequence 

\begin{gather}\label{SequenceTwo}
0 \rightarrow \L^k_{x,y}(X) \rightarrow \L^k(X) \rightarrow (\L^k / \L^k_{x,y})(X) \rightarrow 0
\end{gather}
is exact.

Now if Sequence (\ref{SequenceOne}) is exact in $x$ for $k$, it is also exact in a neighbourhood of $x$. 
The same applies for Sequence (\ref{SequenceTwo}).
The natural map $\Gamma(X, (L^{-d})^k) \rightarrow \Gamma(X, L^{-dk})$ is surjective, hence if sequences (\ref{SequenceOne}) and (\ref{SequenceTwo}) are exact for $k$, they are also exact for $dk$ for every $d >0$. 

Since $X$ is compact, we conclude that there exists a $k$ such that sequences (\ref{SequenceOne}) and (\ref{SequenceTwo}) are exact for every point in $X$ and pairs of points in $X$, respectively.

\begin{corollary}\label{LineBundleEmbedding}
Let $X$ be a compact CR manifold with a transversal CR action of a compact Lie group $K$. \\
Assume that there exists a weakly negative line bundle $L_K \rightarrow X/K$.
Then there exists a natural number $k$ and finitely many CR sections $s_i \in \L^k(X)$ such that, for
 $W = span( s_i)$, we have that
\begin{gather*}
X \rightarrow \mathbb{P} ( W^* ) \\
y \mapsto [ s \mapsto s(y) ]
\end{gather*}
is a CR embedding.
\end{corollary}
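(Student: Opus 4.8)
The plan is to reduce Corollary \ref{LineBundleEmbedding} to the global section-separating statements that were just established in the text, i.e. that for a suitable fixed $k$ the sequences (\ref{SequenceOne}) and (\ref{SequenceTwo}) are simultaneously exact for all points and pairs of points of $X$. Since $X$ is compact, finitely many of the sections $s_i \in \L^k(X)$ produced above already suffice: first cover $X$ by neighbourhoods on which a finite collection of sections gives local immersivity (this uses that exactness of (\ref{SequenceOne}) at a point is an open condition, together with the construction of sections $s_1,\dots,s_m$ whose differentials $d_x s_i$ span $(T_xX)^*$ and a section non-vanishing at $x$), and then add finitely many sections separating the finitely many pairs of points coming from a covering argument on $X \times X$. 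Enlarging $k$ by passing to $dk$ if necessary (using surjectivity of $\Gamma(X,(L^{-d})^k) \to \Gamma(X,L^{-dk})$), we may take a single $k$ and a single finite-dimensional space $W = \mathrm{span}(s_i) \subset \L^k(X)$ that does the job for immersivity, injectivity, and the CR condition.

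The map in question is $\iota \colon X \to \mathbb{P}(W^*)$, $y \mapsto [s \mapsto s(y)]$. First I would check it is well-defined: this requires that for every $y$ not all $s_i$ vanish at $y$, which is exactly the surjectivity in (\ref{SequenceOne}) (the existence of $s$ with $s(y) \neq 0$). Injectivity follows from (\ref{SequenceTwo}): if $y \neq y'$ lie in the same $K$-orbit one still separates them (here one should either include $K$-translates or note that sections can be chosen $K$-equivariantly, as the paper remarks it will show $K$-equivariance), and if they lie in different orbits, some section vanishes at $y$ but not at $y'$, so the evaluation functionals are not proportional. Immersivity: writing $\iota$ locally via an affine chart of $\mathbb{P}(W^*)$ determined by some $s_0$ with $s_0(y) \neq 0$, the components are the ratios $s_i/s_0$ viewed as CR functions near $y$, and the spanning property of the $d_y s_i$ modulo $s_0(y)$ gives that $d_y\iota$ is injective — this is the standard Kodaira-type computation and I would only sketch it.

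The genuinely CR-geometric point, and what I expect to be the main obstacle, is that $\iota$ must be a \emph{CR embedding} in the strong sense defined in the paper, not merely a smooth embedding that is a CR map; recall that in CR codimension $> 1$ these differ. So one must verify $d\iota(T^{1,0}X) = d\iota(\C TX) \cap T^{1,0}\mathbb{P}(W^*)$. The way I would handle this is to factor $\iota$ through the universal equivariant extension or, more directly, through the weakly negative line bundle picture: since $L$ comes from the holomorphic line bundle $L_K \to X/K$ and the $s_i$ come (via the order-$d$ correspondence) from $K^\C$-equivariant holomorphic maps on a neighbourhood of the $S^1$-bundle $X_S$ inside $\C \times^{S^1} X/K \cong L$, the induced projective map is the restriction to $X_S$ of a genuinely \emph{holomorphic} map defined on a neighbourhood of $X_S$ in that complex space. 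A holomorphic embedding restricted to a CR submanifold is automatically a CR embedding, so the CR-transversality condition is inherited for free. Concretely: the sections $s_i$, being CR of order $d$, extend across slices to holomorphic data; the evaluation map $y \mapsto [s \mapsto s(y)]$ then extends to a holomorphic map on a neighbourhood $U$ of $X$ in $Y$ (or in $L$), and once that extension is an immersion near $X$ and injective on $X$, its restriction to $X$ is a CR embedding by definition. After that, closedness is automatic since $X$ is compact, and the $K$-equivariant refinement follows by choosing the $s_i$ inside $K$-isotypic pieces of $\L^k(X)$, exactly as in the proof of Theorem \ref{PseudoconvexEmbedding}.
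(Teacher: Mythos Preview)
Your proposal is correct and follows essentially the same route as the paper: fix $k$ so that sequences (\ref{SequenceOne}) and (\ref{SequenceTwo}) are exact for all points and pairs, use compactness of $X$ to reduce to finitely many sections giving immersivity, then use compactness of the off-diagonal part of $X\times X$ to add finitely many separating sections. The paper's injectivity step is organised slightly differently---it first covers $X$ by open sets $U_i$ on which the already-constructed immersion is injective, and then separates only pairs in the compact set $(X\times X)\setminus\bigcup_i(U_i\times U_i)$---but this is the same standard Kodaira argument you outline.

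One point worth noting: you spend effort justifying that $\iota$ is a CR embedding in the strong sense (via a holomorphic extension to a neighbourhood of $X$ in the complexification), whereas the paper's proof checks only smooth immersion and injectivity and is silent on $d\iota(T^{1,0}X)=d\iota(\C TX)\cap T^{1,0}\mathbb P(W^*)$. Your argument here is the right one and arguably fills a gap the paper leaves implicit; it is exactly the mechanism by which the sections were produced (they arise from holomorphic equivariant maps on slice extensions), so the factorisation through a local holomorphic immersion is available. Your worry about separating points in the same $K$-orbit is also legitimate: sequence (\ref{SequenceTwo}) as set up in the text only handles different $K$-orbits. The paper's implicit fix is that the neighbourhoods $U_i$ on which $\Phi_W$ is injective can be taken $K$-invariant (they come from slice neighbourhoods and the immersion data are built from $K^\C$-equivariant $\varphi$), so same-orbit pairs always lie in a common $U_i$; you should make this explicit rather than appealing to $K$-translates of sections.
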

\begin{proof}
For a subspace $W \subset \L^k(X)$, we denote by $\Phi_W$ the map defined in the corollary for this subspace.
We may assume that $\Phi_W$ is always well-defined.

Choose $k$ so that Sequences (\ref{SequenceOne}) and (\ref{SequenceTwo}) are exact for every $x \in X$ and pairs $x,y \in X$.

Take $x \in X$, then we find sections $g_1,...,g_m \in \L^k$ such that $y \mapsto (g_1(y),...,g_m(y))$ is an immersion in $x$ in a trivialisation around $x$.
We also get a section $g_0$ with $g_0(x) \neq 0$ and $d_x g_0 = 0$.

Set $W_x = \textrm{span} (g_0,...,g_m)$. We may assume that the $g_i$ form a basis for $W_x$, then $\Phi_{W_x}(y) = [g_0(y),...,g_m(y)]$ in the corresponding dual basis.
This shows that $\Phi_{W_x}$ is an immersion in $x$.

We may repeat this process finitely many times to find a subspace $W$ of $\L^k(X)$ such that $\Phi_W$ is an immersion on $X$.
We may therefore cover $X$ by finitely many open sets $U_i$ such that $ \Phi_W \colon U_i \rightarrow \mathbb{P}(W^*)$ is injective.

Now $(X \times X) / \bigcup_i (U_i \times U_i)$ is compact, and for two points $(x,y) \in (X \times X) / \bigcup_i (U_i \times U_i)$, we find a section $\hat s$ such that $\Phi_{W + \C \hat s}$ separates $x$ and $y$. 

We only need to do this finitely many times to ensure that $\Phi$ is injective.
\end{proof}

Since the bundle is $K$-invariant, we have a $K$-action on the space of global sections defined by $k s (x) := s (k^{-1} x)$.
We then get a $K$-action on $\Gamma (X, L)^*$ via $k \lambda (s) := \lambda( k^{-1} s)$.
The map in Corollary \ref{LineBundleEmbedding} is equivariant if $W$ is $K$-invariant.

We may define a seminorm on the global CR sections by taking a $K$-invariant open subset $U$ of $X$ over which $L$ is trivial and defining $||s||_U := \textrm{sup}_{x \in U} ||s(x)||^2 + \textrm{sup}_{x \in U} || ds(x)||^2$.  Since $X$ is compact, this induces a norm by taking the supremum over finitely many such seminorms.
Then the $K$-action defined above is continuous. 
From \cite[Proposition 3.6]{Borel}, we conclude that the $K$-finite sections are dense and the embedding from above can be choosen to be equivariant.

\bibliographystyle{alpha}

\bibliography{paper}

\end{document}